\documentclass[onefignum,onetabnum]{siamart190516}



\usepackage{lipsum}
\usepackage{amsfonts}
\usepackage{graphicx}
\usepackage{epstopdf}
\usepackage{algorithmic}
\usepackage{mathtools}
\usepackage{amssymb}
\usepackage{bm}
\usepackage{stmaryrd}
\usepackage{tikz}
\usepackage{subcaption}
\usepackage{dsfont}
\usepackage{mathrsfs}
\usepackage{enumitem}
\usepackage{tabularx}
\ifpdf
  \DeclareGraphicsExtensions{.eps,.pdf,.png,.jpg}
\else
  \DeclareGraphicsExtensions{.eps}
\fi


\newsiamremark{remark}{Remark}
\newsiamremark{hypothesis}{Hypothesis}
\crefname{hypothesis}{Hypothesis}{Hypotheses}
\newsiamthm{claim}{Claim}
\newsiamremark{example}{Example}
\crefname{subsection}{section}{sections}  
\Crefname{subsection}{Section}{Sections}

\headers{Flow in Porous Media with Fractures}{S. Burbulla, M. Hörl, and C. Rohde}

\title{Flow in Porous Media with Fractures of Varying Aperture\thanks{ \textbf{Funding:}
{This work was funded by the Deutsche Forschungsgemeinschaft (DFG, German Research Foundation) – Project Number 327154368 – SFB 1313. 
C.R. acknowledges funding by the DFG under Germany’s Excellence Strategy – EXC 2075–390740016.}}}

\author{Samuel Burbulla\thanks{Institute of Applied Analysis and Numerical Simulation, University of Stuttgart, Pfaffenwaldring~57, D-70569 Stuttgart, Germany (\email{samuel.burbulla@ians.uni-stuttgart.de}, \email{maximilian.hoerl@ians.uni-stuttgart.de}, \email{christian.rohde@ians.uni-stuttgart.de}).}
\and Maximilian Hörl\footnotemark[2]
\and Christian Rohde\footnotemark[2]}

\usepackage{amsopn}

\let\originalleft\left
\let\originalright\right
\renewcommand{\left}{\mathopen{}\mathclose\bgroup\originalleft}
\renewcommand{\right}{\aftergroup\egroup\originalright}

\let\oldphi\phi
\let\phi\varphi
\let\varphi\oldphi

\let\oldrho\rho
\let\rho\varrho
\let\varrho\oldrho

\let\oldepsilon\epsilon
\let\epsilon\varepsilon
\let\varepsilon\oldepsilon

\let\oldtheta\theta
\let\theta\vartheta
\let\vartheta\oldtheta

\newcommand*{\matr}[1]{\mathbf{#1}}
\newcommand*{\vct}[1]{{\bm{#1}}}
\newcommand{\abs}[1]{{\left\vert #1 \right\vert}}
\newcommand{\norm}[1]{{\left\Vert #1 \right\Vert}}
\newcommand\restr[2]{{ \left.\kern-\nulldelimiterspace #1 \vphantom{\big|} \right|_{#2} }}
\newcommand*{\ldblbrace}{\{\mskip-5mu\{}
\newcommand*{\rdblbrace}{\}\mskip-5mu\}}
\newcommand*{\avg}[1]{\ldblbrace #1 \rdblbrace}
\newcommand*{\jump}[1]{\llbracket #1 \rrbracket}
\newcommand{\partialDer}[2]{\frac{\partial #1}{\partial #2}}
\newcommand{\totalDer}[2]{\frac{\mathrm{d} #1}{\mathrm{d} #2}}

\allowdisplaybreaks


\ifpdf
\hypersetup{
  pdftitle={Flow in Porous Media with Fractures of Varying Aperture},
  pdfauthor={S. Burbulla, M. Hörl, and C. Rohde}
}
\fi




\begin{document}

\maketitle

\begin{abstract}
We study single-phase flow in a fractured porous medium at a macroscopic scale that allows to model fractures individually. The flow is governed by Darcy's law in both fractures and porous matrix. We derive a new mixed-dimensional model, where fractures are represented by $(n-1)$-dimensional interfaces between $n$-dimensional subdomains for $n\ge 2$. 
In particular, we suggest a generalization of the model in~\cite{martin05} by accounting for asymmetric fractures with spatially varying aperture. 
Thus, the new model is particularly convenient for the description of surface roughness or for modeling curvilinear or winding fractures.
The wellposedness of the new model is proven under appropriate conditions.
Further, we formulate a discontinuous Galerkin discretization of the new model and validate the model by performing two- and three-dimensional numerical experiments.
\end{abstract}

\begin{keywords}
  fractures, flow in porous media, varying aperture, discontinuous Galerkin
\end{keywords}

\begin{AMS}
  76S05, 35J20, 35J25, 65N30
\end{AMS}

\section{Introduction}
In many situations, the fluid flow in a porous medium is significantly influenced, if not dominated, by the presence of fractures. 
Fractures are characterized by an extreme geometry with a thin aperture but wide extent in the remaining directions of space.
Hence, they form narrow heterogeneities that pose a challenge for classical continuum modeling. 
We assume that fractures are filled by another porous medium (e.g., debris) whose hydraulic properties may differ considerably from those of the surrounding porous matrix.
Depending on the permeability inside a fracture, the fracture may serve as primary transport path for the overall flow in a porous medium or, conversely, may act as an almost impermeable barrier.

Fluid flow in fractured porous media is of essential relevance for a wide range of applications, such as geothermal energy, enhanced oil recovery, groundwater flow, nuclear waste disposal, and carbon sequestration.
In the last decades, a diversity of mathematical models for fluid flow in fractured porous media has been developed. 
For a current review of modeling and discretization approaches, we refer to \cite{berre19} and the literature therein. 
Besides, for a comparison of numerical methods and benchmark test cases for single-phase flow in fractured porous media, we refer to~\cite{berre21,flemisch18,fumagalli19}.

A common macroscopic modeling approach for flow in fractured porous media is to consider a reduced representation where fractures are described explicitly as $(n-1)$-dimensional interfaces between $n$-dimensional bulk domains \cite{ahmed17,angot09,boon18,formaggia14,frih08,fumagalli21,jaffre11,kumar20,lesinigo11,martin05,schwenck15}. 
In contrast to a full-dimensional representation of fractures, this avoids thin equi-dimensional subdomains which require highly resolved grids in numerical methods.
Typically, these kinds of interface models are based on the idealized conception of a planar fracture geometry with constant aperture.
However, in this case, the resulting reduced model cannot account for surface roughness and does not properly describe curvilinear or winding fractures.
Thus, more generally, the geometry of
a fracture may be described by spatially varying aperture functions, which is the approach that we follow here.

Specifically, we consider single-phase fluid flow in a fractured porous medium governed by Darcy's law. 
We suggest a new mixed-dimensional model that accounts for asymmetric fractures with spatially varying aperture and, thereby, propose an extension and alternative derivation of the model in the seminal work in~\cite{martin05} that was derived for fractures with constant aperture. 
For the derivation of the new fracture-averaged model, we proceed from a domain-decomposed system for Darcy's flow with full-dimensional fracture that closely resembles the initial model in~\cite{martin05}.
However, as a central issue when dealing with a spatially varying fracture aperture, the normal vectors at the internal boundaries of the initial full-dimensional fracture depend on aperture gradients and are generally not aligned with the normal vector of the interfacial fracture in the desired reduced model. 
In contrast to the approach in~\cite{martin05}, we address this problem by employing a weak formulation when averaging across the fracture and by approximating suitable curve integrals across the fracture to obtain internal boundary conditions.

\Cref{sec:sec2} introduces an initial model problem with full-dimensional fracture in a weak formulation.
In \Cref{sec:sec3}, we derive a new reduced model with interfacial fracture that accounts for a spatially varying aperture. 
The resulting reduced model is summarized and discussed in \Cref{sec:sec4}, where we also introduce four related model variants.
\Cref{sec:sec5} introduces a discontinuous Galerkin (DG) discretization of the new model. 
Numerical results are presented in \Cref{sec:sec6}.

\section{Darcy Flow with Full-Dimensional Fracture} \label{sec:sec2}
Let $\Omega\subset \mathbb{R}^n$ be a bounded Lipschitz domain that is occupied by an $n$-dimensional porous medium, where $n\in\mathbb{N}$ with $n\ge 2$. We suppose that the flow of a single-phase fluid in~$\Omega$ is governed by Darcy's law and mass conservation, i.e.,
\begin{subequations}
\begin{alignat}{2}
-\nabla \cdot \left( \matr{K} \nabla p \right) &= q \qquad &&\text{in } \Omega , \\
p &= 0 \qquad &&\text{on } \partial \Omega .
\end{alignat}%
\label{eq:darcy}%
\end{subequations}
In \cref{eq:darcy}, $p\colon \Omega \rightarrow \mathbb{R}$ is the pressure and $q\in L^2 (\Omega )$ denotes the source term. 
The permeability matrix is denoted by~$\matr{K}\in L^\infty (\Omega ; \mathbb{R}^{n\times n} )$ and required to be symmetric and uniformly elliptic. 
The choice of homogeneous Dirichlet boundary conditions is only made for the sake of simplicity.
A weak formulation of the Darcy system~\eqref{eq:darcy} is given by the following problem.
Find $p\in H^1_0\left(\Omega\right)$ such that
\begin{align}
\int_\Omega \matr{K} \nabla p \cdot \nabla \phi \,\mathrm{d} V &= \int_\Omega q \phi \,\mathrm{d} V \qquad \text{for all } \phi\in H^1_0\left(\Omega\right). \label{eq:weakdarcyinit}
\end{align}
 
We consider the case of a single fracture as an $n$-dimensional open subdomain~$\Omega_\mathrm{f} \subset \Omega$ crossing the entire domain~$\Omega$ such that $\Omega$ is cut into two disjoint connected subdomains~$\Omega_1$ and $\Omega_2$, i.e., $\Omega \setminus \overline{\Omega}_\mathrm{f} = \Omega_1\: \dot{\cup}\: \Omega_2$. 
Moreover, we suppose that the fracture domain~$\Omega_\mathrm{f}$ can be parameterized by a hyperplane~$\Gamma$ and two functions~$d_1 , d_2 \in W^{1,\infty } (\Gamma )$ which describe the aperture of the fracture on the left and right side of~$\Gamma$ such that
\begin{align}
\Omega_\mathrm{f} = \big\{ \vct{\gamma} + \lambda \vct{n} \in \Omega \ \big\vert\ \vct{\gamma} \in \Gamma ,\, \lambda \in \left( - d_1\left(\vct{\gamma} \right), d_2\left( \vct{\gamma} \right) \right) \big\} . \label{eq:Omegaf}
\end{align}
In \cref{eq:Omegaf}, $\vct{n}$ denotes the unit normal of the hyperplane~$\Gamma$ that points in the direction of~$\Omega_2$. 
Further, we write $d := d_1 + d_2 > 0$ for the total aperture of the fracture. 
We only require the total aperture~$d$ to be positive, not the functions~$d_1$ and $d_2$, so that the hyperplane~$\Gamma$ is not necessarily required to be fully immersed inside the fracture domain~$\Omega_\mathrm{f}$. 
This allows the description of curvilinear or winding fractures in a natural way.  
Besides, w.l.o.g., the hyperplane~$\Gamma$ is represented by
$\Gamma = \left\{ \vct{x} \in \Omega \ \middle\vert\ \vct{n}  \cdot \vct{x} = 0\right\}$. 
In addition, we denote by $\rho_i := \partial\Omega_i  \cap  \partial\Omega$ the exterior boundary of the overall domain~$\Omega$ inside~$\Omega_i$ for $ i \in \left\{ 1,2,\mathrm{f} \right\}$ and by $\Gamma_i := \partial\Omega_i \cap \partial{\Omega_\mathrm{f}}$ the interface between the bulk domain~$\Omega_i$ and the fracture domain~$\Omega_\mathrm{f}$ for $i\in\left\{ 1,2\right\}$.
\begin{figure}[tb]
\centering
\includegraphics[width=0.6\textwidth]{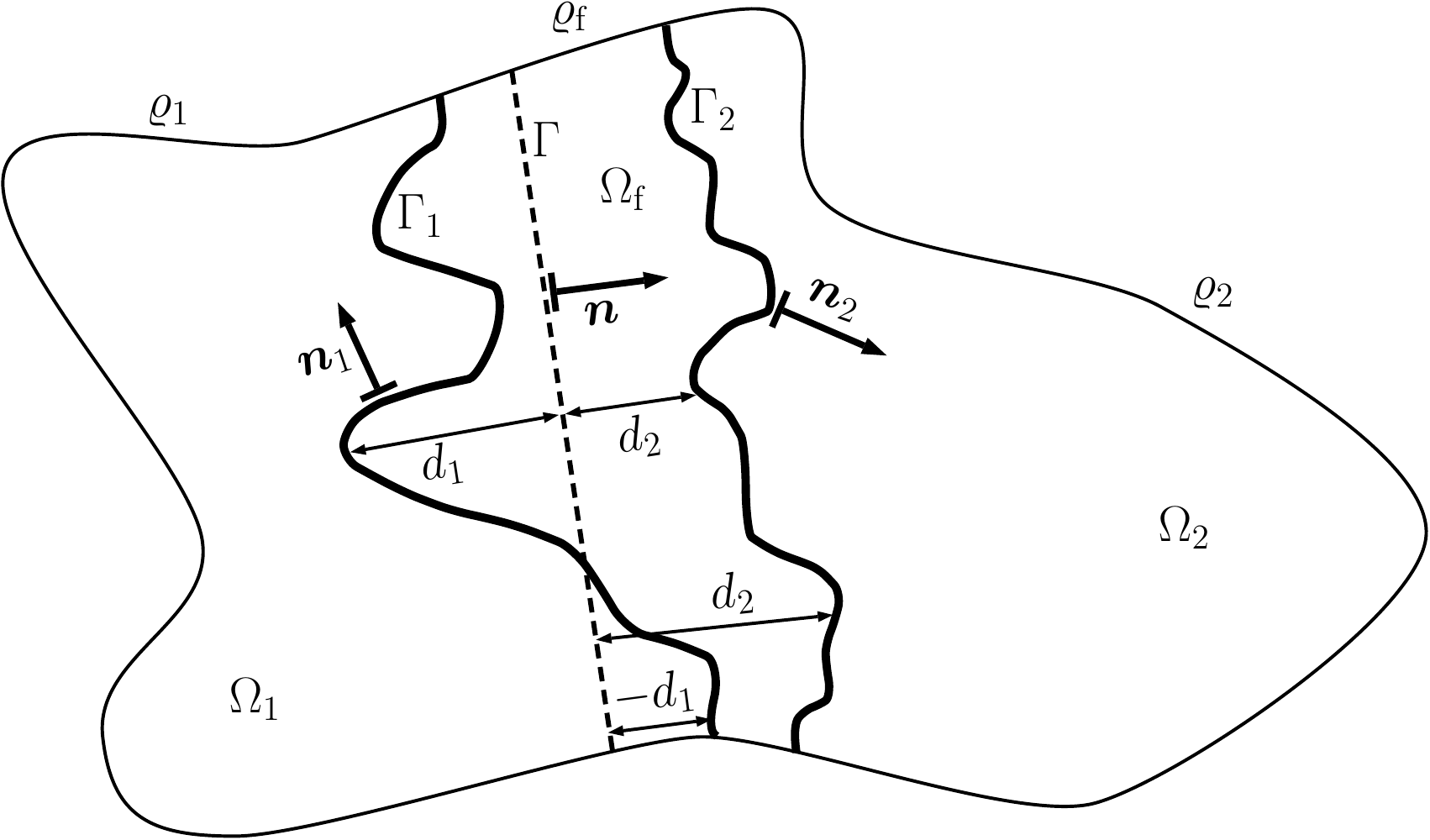}
\caption{Sketch of the geometry for the full-dimensional Darcy problem~\eqref{eq:darcydecomposed}.}
\label{fig:Omega_f}
\end{figure}
The specified geometric situation is sketched in \Cref{fig:Omega_f}. 
We remark that, for the representation of the fracture in \cref{eq:Omegaf}, it is required that the connecting lines between the interfaces~$\Gamma_1$ and $\Gamma_2$ along~$\vct{n}$ exist and are contained in the fracture domain~$\Omega_\mathrm{f}$. 

Next, for $i\in\left\{ 1,2,\mathrm{f}\right\}$ and functions~$f\colon \Omega \rightarrow B$ and $\bar{f}\colon \partial \Omega \rightarrow B$ with arbitrary codomain~$B$, we introduce the notation 
\begin{align}
f_i := \restr{f}{\Omega_i} \quad \text{and} \quad \bar{f}_i := \restr{\bar{f}}{\rho_i} .
\end{align}
Following a domain decomposition approach, this allows us to reformulate~\eqref{eq:darcy} as 
\begin{subequations}
\begin{alignat}{2}
-\nabla\cdot \left( \matr{K}_i\nabla p_i  \right) &= q_i \qquad&\text{in } \Omega_i , \quad &i\in\left\{1,2,\mathrm{f}\right\} ,\\
\label{eq:dirichletdd} p_i &= 0 \qquad&\text{on } \rho_i , \quad &i\in\left\{1,2,\mathrm{f}\right\} , \\
p_i &= p_\mathrm{f} \qquad&\text{on } \Gamma_i , \quad &i\in\left\{1,2\right\} ,\label{eq:pcontinuity1}\\
\matr{K}_i\nabla p_i \cdot \vct{n}_i &= \matr{K}_\mathrm{f}\nabla p_\mathrm{f}\cdot \vct{n}_i \qquad&\text{on } \Gamma_i , \quad &i\in\left\{1,2\right\}. \label{eq:ucontinuity}
\end{alignat}%
\label{eq:darcydecomposed}%
\end{subequations}
Here, for $i\in\left\{1,2\right\}$, we denote by $\vct{n}_i$ the unit normal to the interface~$\Gamma_i$ that points into the bulk domain~$\Omega_i$. 
The vectors~$\vct{n}_1$ and $\vct{n}_2$ are given by
\begin{align}
\vct{n}_1 = \frac{ -\vct{n} - \nabla d_1 }{\sqrt{1 + \abs{\nabla d_1}^2}} , \qquad   \vct{n}_2 = \frac{\vct{n} - \nabla d_2}{\sqrt{1 + \abs{\nabla d_2}^2}}. \label{eq:normals}
\end{align} 

Next, we set up a weak formulation for the domain-decomposed Darcy problem in~\cref{eq:darcydecomposed}.
For $i \in\{ 1, 2, \mathrm{f} \}$, we define the space~$V_i$ by
\begin{align}
V_i &:= \left\{ p_i \in H^1_{0,\rho_i} (\Omega_i )   \ \middle\vert \  \matr{K}_i \nabla p_i \in H_\mathrm{div} (\Omega_i )  \right\} .
\end{align}
Further, with $H_\mathrm{div} ( \Omega_i ) := \{ \vct{w} \in L^2 (\Omega_i ; \mathbb{R}^n ) \, \vert \, \nabla \cdot \vct{w} \in L^2 (\Omega_i ) \}$ and $H^1_{0,\rho_i } (\Omega_i ) :=  \{ f \in H^1 (\Omega_i ) \, \vert\, f = 0 \text{ on } \rho_i \}$ for~$i\in\{ 1,2,\mathrm{f}\}$, we define the domain-decomposed spaces
\begin{align}
\begin{split}
V_\mathrm{dd} &:= \big\{ \left( p_1 , p_2 , p_\mathrm{f} \right) \in \bigtimes\nolimits_{i=1,2,\mathrm{f}} V_i  \ \big\vert \ p_i = p_\mathrm{f} \text{ a.e. on } \Gamma_i ,\; i\in\left\{ 1,2\right\} \big\} , 
\end{split} \label{eq:Vdd} \\
\Phi_\mathrm{dd} &:= \bigtimes\nolimits_{i=1,2,\mathrm{f}} H^1_{0,\rho_i}\left( \Omega_i \right) .
\end{align}
Then, a weak formulation of the domain-decomposed Darcy problem~\eqref{eq:darcydecomposed} is given by the following problem. 
Find $p = \left( p_1 , p_2 , p_\mathrm{f}\right)\in V_\mathrm{dd}$ such that
\begin{align}
\mathcal{B}_\mathrm{dd} \left( p , \phi \right) &= \mathcal{L}_\mathrm{dd} \left( \phi\right) \qquad \text{for all } \phi = \left( \phi_1 , \phi_2 , \phi_\mathrm{f} \right) \in \Phi_\mathrm{dd} . \label{eq:weakdarcy}
\end{align}
In \cref{eq:weakdarcy}, given $p = \left( p_1 , p_2 , p_\mathrm{f}\right) \in V_\mathrm{dd}$ and $\phi = \left( \phi_1 , \phi_2 , \phi_\mathrm{f} \right) \in \Phi_\mathrm{dd}$, the bilinear form $\mathcal{B}_\mathrm{dd} \colon V_\mathrm{dd} \times \Phi_\mathrm{dd} \rightarrow \mathbb{R}$ and the linear form~$\mathcal{L}_\mathrm{dd} \colon \Phi_\mathrm{dd} \rightarrow \mathbb{R}$ are defined by
\begin{subequations}
\begin{align}
\begin{split}
\label{eq:B} \mathcal{B}_\mathrm{dd} \left( p , \phi \right) &:= \smash{\sum_{i=1,2,\mathrm{f}}} \int_{\Omega_i} \matr{K}_i \nabla p_i \cdot \nabla \phi_i \, \mathrm{d} V   - \sum_{i=1,2} \int_{\Gamma_i} \big[ \phi_\mathrm{f} - \phi_i \big] \matr{K}_i \nabla p_i \cdot \vct{n}_i \,\mathrm{d}\sigma , 
\end{split} \\
\mathcal{L}_\mathrm{dd} \left( \phi \right) &:= \sum_{i=1,2,\mathrm{f}} \int_{\Omega_i} q_i \phi_i \,\mathrm{d} V . \label{eq:L}
\end{align}
\end{subequations} 
Moreover, we state the following result, which is often used in the context of domain decomposition methods. For a proof, we refer to~\cite{hoerl22}.
\begin{theorem}
\begin{enumerate}[label=(\roman*)]
\item Let $p\in H_0^1(\Omega )$ be a weak solution of the single-domain Darcy problem~\eqref{eq:weakdarcyinit}. 
Then, $\left( p_1, p_2 , p_\mathrm{f}\right) \in V_\mathrm{dd}$ is a weak solution of the do\-main-de\-com\-posed Darcy problem~\eqref{eq:weakdarcy}.
\item Conversely, let $\big( p^{(1)}, p^{(2)} , p^{(\mathrm{f})}\big) \in V_\mathrm{dd}$ be a weak solution of the do\-main-de\-com\-posed Darcy problem~\eqref{eq:weakdarcy}. 
Then, the function $p \in H^1_0\left(\Omega \right)$ defined by 
\begin{align}
p \left( \vct{x} \right) &:= \begin{cases} 
p^{(i)}\left(\vct{x}\right) &\text{if } \vct{x} \in\Omega_i , \enspace i\in \{ 1,2,\mathrm{f} \} \\
 \end{cases}%
 \label{eq:pdecomposed}
\end{align}
is a weak solution of the single-domain Darcy problem~\eqref{eq:weakdarcyinit}.
\end{enumerate}
In particular, the problem~\eqref{eq:weakdarcy} has a unique weak solution~$\left( p_1, p_2 , p_\mathrm{f}\right) \in V_\mathrm{dd}$.
\end{theorem}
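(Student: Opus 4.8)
The plan is to prove the two implications (i) and (ii), to observe that the operations ``restrict to the subdomains'' and ``glue across the interfaces'' are mutually inverse, and then to transport the classical Lax--Milgram solvability of the single-domain problem~\eqref{eq:weakdarcyinit} through this bijection to obtain the uniqueness assertion. The computational heart of both implications is a single integration-by-parts (Green's) identity on each of the three subdomains, so I would set that up first. For $\vct{w}\in H_{\mathrm{div}}(\Omega_i)$ and $\phi\in H^1_{0,\rho_i}(\Omega_i)$ the normal trace $\vct{w}\cdot\vct{\nu}_i$ lives in $H^{-1/2}(\partial\Omega_i)$ and Green's formula holds, with the boundary pairing reducing to the interfaces (that is, to $\Gamma_i$ for $i\in\{1,2\}$ and to $\Gamma_1\cup\Gamma_2$ for $i=\mathrm{f}$) because the test function vanishes on $\rho_i$; here $\vct{\nu}_i$ denotes the outward unit normal of $\Omega_i$, which equals $-\vct{n}_i$ on $\Gamma_i$ for $i\in\{1,2\}$, while for the fracture domain the outward normal on $\Gamma_1,\Gamma_2$ equals $\vct{n}_1,\vct{n}_2$.

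For implication (i), I would first check the membership $(p_1,p_2,p_\mathrm{f})\in V_\mathrm{dd}$: each restriction $p_i=\restr{p}{\Omega_i}$ lies in $H^1_{0,\rho_i}(\Omega_i)$, and testing~\eqref{eq:weakdarcyinit} against $\phi\in C_c^\infty(\Omega)$ shows $-\nabla\cdot(\matr{K}\nabla p)=q\in L^2(\Omega)$, whence $\matr{K}_i\nabla p_i\in H_{\mathrm{div}}(\Omega_i)$; the continuity $p_i=p_\mathrm{f}$ on $\Gamma_i$ is just single-valuedness of the trace of the global $H^1$ function. Since $\matr{K}\nabla p\in H_{\mathrm{div}}(\Omega)$, its normal trace across each $\Gamma_i$ is single-valued, which is exactly the weak flux continuity~\eqref{eq:ucontinuity}. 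Applying the Green identity on $\Omega_1,\Omega_2,\Omega_\mathrm{f}$ and summing converts $\sum_i\int_{\Omega_i}\matr{K}_i\nabla p_i\cdot\nabla\phi_i\,\mathrm{d}V$ into $\mathcal{L}_\mathrm{dd}(\phi)$ plus interface contributions; the $\phi_i$-terms cancel against the correction terms in $\mathcal{B}_\mathrm{dd}$, while the remaining $\phi_\mathrm{f}$-terms collapse to $\sum_{i=1,2}\int_{\Gamma_i}\phi_\mathrm{f}\,[\matr{K}_\mathrm{f}\nabla p_\mathrm{f}-\matr{K}_i\nabla p_i]\cdot\vct{n}_i\,\mathrm{d}\sigma$, which vanishes by flux continuity. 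This yields $\mathcal{B}_\mathrm{dd}(p,\phi)=\mathcal{L}_\mathrm{dd}(\phi)$.

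For implication (ii), the gluing lemma for $H^1$ functions is the key tool: because the interface traces match by the defining constraint of $V_\mathrm{dd}$ and each piece vanishes on $\rho_i$, the function $p$ in~\eqref{eq:pdecomposed} belongs to $H^1_0(\Omega)$. Given any $\phi\in H^1_0(\Omega)$, its restriction $(\restr{\phi}{\Omega_1},\restr{\phi}{\Omega_2},\restr{\phi}{\Omega_\mathrm{f}})$ is an admissible test triple in $\Phi_\mathrm{dd}$ that is continuous across the interfaces, so $\phi_\mathrm{f}=\phi_i$ on $\Gamma_i$ makes all correction terms in $\mathcal{B}_\mathrm{dd}$ vanish; what survives is $\int_\Omega\matr{K}\nabla p\cdot\nabla\phi\,\mathrm{d}V=\mathcal{B}_\mathrm{dd}(p,\phi)=\mathcal{L}_\mathrm{dd}(\phi)=\int_\Omega q\phi\,\mathrm{d}V$, i.e.\ $p$ solves~\eqref{eq:weakdarcyinit}.

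Finally, restriction and gluing are inverse to one another, so they set up a bijection between the solution sets of~\eqref{eq:weakdarcyinit} and~\eqref{eq:weakdarcy}. Coercivity of $\phi\mapsto\int_\Omega\matr{K}\nabla\phi\cdot\nabla\phi\,\mathrm{d}V$ on $H^1_0(\Omega)$ (uniform ellipticity of $\matr{K}$ together with the Poincar\'e inequality) and boundedness of the data furnish a unique solution of~\eqref{eq:weakdarcyinit} by Lax--Milgram, and uniqueness for~\eqref{eq:weakdarcy} follows. I expect the main obstacle to be the rigorous handling of the interface integrals: the fluxes $\matr{K}_i\nabla p_i\cdot\vct{n}_i$ are a priori only $H^{-1/2}$ normal traces, so the symbols $\int_{\Gamma_i}\cdots\,\mathrm{d}\sigma$ in $\mathcal{B}_\mathrm{dd}$ must be read as duality pairings, and establishing weak flux continuity as single-valuedness of the global normal trace (rather than as a pointwise statement) is the step that needs the most care. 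Notably, the spatially varying aperture enters only through the explicit unit normals $\vct{n}_i$ of~\eqref{eq:normals} and does not otherwise affect any of these arguments.
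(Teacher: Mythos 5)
Your proposal is correct. Note that the paper itself does not prove this theorem but defers to the reference~\cite{hoerl22}; your argument --- subdomain-wise Green's identities with $H^{-1/2}$ normal traces for (i), the $H^1$ gluing lemma and cancellation of the interface corrections for continuous test triples for (ii), and transport of the Lax--Milgram solvability of~\eqref{eq:weakdarcyinit} through the restriction/gluing bijection for uniqueness --- is the standard domain-decomposition equivalence proof that such a reference supplies, and you correctly flag the one genuinely delicate point, namely the interpretation of the interface integrals in $\mathcal{B}_\mathrm{dd}$ as duality pairings and the meaning of flux continuity as single-valuedness of the normal trace of $\matr{K}\nabla p\in H_\mathrm{div}(\Omega)$.
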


\section{Derivation of a Reduced Model} \label{sec:sec3}
Proceeding from the weak domain-decomposed Darcy problem~\eqref{eq:weakdarcy}, we will derive a new reduced model in which the fracture cutting the domain~$\Omega$ is solely described by the hyperplane~$\Gamma$ between the two bulk subdomains~$\Omega_1$ and $\Omega_2$. 
The new reduced model~\eqref{eq:weak} is obtained by introducing fracture-averaged effective quantities, splitting integrals over~$\Omega_\mathrm{f}$ into a surface integral over $\Gamma$ and a line integral in normal direction, and transforming integrals over the interfaces~$\Gamma_i$ into integrals over~$\Gamma$. 
In addition, two boundary conditions on~$\Gamma$ are found by approximating curve integrals across the fracture domain~$\Omega_\mathrm{f}$ using a quadrature rule and polynomial interpolation. 
Besides, the following result on the weak differentiation of parameter integrals will be useful.
\begin{lemma}[Weak Leibniz Rule] \label{lem:leibniz}
For $i\in\left\{ 1,2\right\}$, let $a_i, b_i \in \mathbb{R}$ with $a_i < b_i$ and $I_i := \left( a_i , b_i \right)$. Further, let $f \in W^{1,p} \left( I_1 \times I_2 \right)$ and $\chi , \psi \in W^{1,\infty } \left( I_2 \right) \cap W^{2,q} \left( I_2 \right)$ such that $\chi\left( I_2\right) , \psi \left(I_2\right) \subset I_1$ (except for null sets), where $p, q \in (1, \infty )$ with $\frac{1}{p} + \frac{1}{q} = 1$. Then, the function 
$\smash{I_2 \rightarrow \mathbb{R}, \ y \mapsto \int_{\chi \left( y \right)}^{\psi \left( y \right)} \! f\left( x , y\right) \,\mathrm{d} x}$ 
belongs to $W^{1,1} \left( I_2 \right)$ and the relation
\begin{subequations}
\begin{align}
\begin{split}
&\totalDer{}{y} \int_{\chi \left( y \right)}^{\psi \left( y \right)} \!  f\left( x , y\right) \,\mathrm{d} x  = \int_{\chi \left( y \right)}^{\psi \left( y \right)} \! \partialDer{}{y} f\left( x , y\right) \,\mathrm{d} x \, +\, f\left( \psi \left( y \right) , y \right) \psi^\prime \left( y\right) \, - \, f\left( \chi \left( y \right) , y \right) \chi^\prime \left( y\right)
\end{split} 
\end{align}
holds in a weak sense, i.e., for every test function~$\varphi \in \mathcal{C}^\infty_\mathrm{c} \left( I_2 \right)$, we have  
\begin{align}
\begin{split}
&-\int_{a_2}^{b_2} \left[ \int_{\chi \left( y \right)}^{\psi \left( y \right)} \! f\left( x , y\right) \,\mathrm{d} x \right] \varphi^\prime \left( y \right) \,\mathrm{d} y  = \int_{a_2}^{b_2} \left[ \int_{\chi \left( y \right)}^{\psi \left( y \right)} \! \partialDer{}{y} f\left( x , y\right) \,\mathrm{d} x  \right] \varphi\left( y\right) \,\mathrm{d}y \\
&\hspace{3.9cm} + \int_{a_2}^{b_2} \Big[ f\left( \psi \left( y \right) , y \right) \psi^\prime \left( y\right) \, - \, f\left( \chi \left( y \right) , y \right) \chi^\prime \left( y\right) \Big] \varphi\left( y\right) \,\mathrm{d}y .
\end{split} \label{eq:weakleibniz}
\end{align}%
\label{eq:leibniz}%
\end{subequations}
\end{lemma}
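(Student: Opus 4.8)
The plan is to reduce the statement to the classical Leibniz rule for smooth data and then to pass to the limit in the weak identity~\eqref{eq:weakleibniz}. Since $I_1\times I_2$ is a rectangle, I would pick $f_k\in\mathcal{C}^\infty(\overline{I_1\times I_2})$ with $f_k\to f$ in $W^{1,p}(I_1\times I_2)$, and, exploiting the one-dimensional embedding $W^{2,q}(I_2)\hookrightarrow\mathcal{C}^1(\overline{I_2})$, pick $\chi_k,\psi_k\in\mathcal{C}^\infty(\overline{I_2})$ with $\chi_k\to\chi$ and $\psi_k\to\psi$ in $W^{2,q}(I_2)$. The point of requiring the $W^{2,q}$-regularity is precisely this: it forces $\chi_k\to\chi$ and $\psi_k\to\psi$ \emph{together with their first derivatives} uniformly on $\overline{I_2}$, which mere Lipschitz regularity would not give. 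For fixed $k$ everything is smooth, so the classical Leibniz rule (the fundamental theorem of calculus combined with the chain rule) furnishes the pointwise identity for $F_k(y):=\int_{\chi_k(y)}^{\psi_k(y)}f_k(x,y)\,\mathrm{d}x$; testing it against $-\varphi'$ and $\varphi$ gives the smooth analogue of~\eqref{eq:weakleibniz}, and it remains to pass to the limit $k\to\infty$ in the three resulting terms.

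For the left-hand side and the interior term I would establish $F_k\to F$ and $\int_{\chi_k}^{\psi_k}\partial_y f_k\,\mathrm{d}x\to\int_{\chi}^{\psi}\partial_y f\,\mathrm{d}x$ in $L^1(I_2)$. A slice integral $\int_{\alpha}^{\beta}g(x,\cdot)\,\mathrm{d}x$ of an $L^p$-function is naturally controlled by Hölder's inequality through $\abs{\beta-\alpha}^{1/q}\,\norm{g}_{L^p}$, which is where the conjugacy $\tfrac1p+\tfrac1q=1$ enters. Splitting each integral into the part over the common overlap and the parts over the two vanishing endpoint strips, and combining $f_k\to f$, $\partial_y f_k\to\partial_y f$ in $L^p$ with the uniform convergence $\chi_k\to\chi$, $\psi_k\to\psi$ and absolute continuity of the integral, yields the claimed $L^1(I_2)$-convergence of both terms.

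The main obstacle is the convergence of the boundary terms, say $f_k(\psi_k(y),y)\,\psi_k'(y)\to f(\psi(y),y)\,\psi'(y)$, which amounts to passing to the limit in the trace of a $W^{1,p}$-function along the moving Lipschitz curve $x=\psi(y)$. Since $\psi_k'\to\psi'$ uniformly, only the trace factor is delicate. Fixing a base point $x_0\in I_1$ and using the fundamental theorem of calculus in $x$, I would write $f_k(\psi_k(y),y)=f_k(x_0,y)+\int_{x_0}^{\psi_k(y)}\partial_x f_k(x,y)\,\mathrm{d}x$, and likewise for $f$. The horizontal trace obeys $f_k(x_0,\cdot)\to f(x_0,\cdot)$ in $L^p(I_2)$ for a.e.\ $x_0$, because $x\mapsto f(x,\cdot)$ admits an absolutely continuous $L^p(I_2)$-valued representative; the remainder converges since $\partial_x f_k\to\partial_x f$ in $L^p(I_1\times I_2)$ and $\psi_k\to\psi$ uniformly, again by the Hölder bound on the moving-endpoint slice. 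Hence $f_k(\psi_k(\cdot),\cdot)\to f(\psi(\cdot),\cdot)$ in $L^p(I_2)$, and multiplication by the uniformly convergent bounded factor $\psi_k'$ gives convergence of the boundary term in $L^1(I_2)$; the $\chi$-term is identical. Passing to the limit establishes~\eqref{eq:weakleibniz}, and since each summand on its right-hand side is the pairing of $\varphi$ with an $L^1(I_2)$-function — the interior integral lies in $L^1(I_2)$ by the Hölder estimate, while the traces lie in $L^p(I_2)\subset L^1(I_2)$ and $\chi',\psi'\in L^\infty(I_2)$ — this function is exactly the weak derivative of $F$, proving $F\in W^{1,1}(I_2)$.

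As a conceptual cross-check, I note that~\eqref{eq:weakleibniz} is nothing but the Gauss--Green identity on the planar region $R:=\{(x,y)\in I_1\times I_2 \mid x \text{ lies between } \chi(y) \text{ and } \psi(y)\}$ applied to the field $(0,f\varphi)$: because $\chi,\psi\in W^{1,\infty}(I_2)$ the boundary $\partial R$ is Lipschitz, the divergence theorem holds for the $W^{1,1}(R)$-field $(0,f\varphi)$, the contributions along $y=a_2,b_2$ vanish as $\varphi$ has compact support, and the two lateral curves produce exactly the trace terms $f(\psi,\cdot)\psi'$ and $-f(\chi,\cdot)\chi'$. I would nevertheless present the density argument as the main proof, since it renders the trace convergence — the genuinely subtle point — fully transparent.
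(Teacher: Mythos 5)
Your proposal is correct and follows exactly the route the paper takes: its proof consists of the single sentence ``Approximating $f$, $\chi$, and $\psi$ by smooth functions, the result can be traced back to the classical Leibniz rule,'' and your argument is a careful fleshing-out of that density strategy, with the limit passage (Hölder control of the slice integrals via $\tfrac{1}{p}+\tfrac{1}{q}=1$, the $W^{2,q}(I_2)\hookrightarrow\mathcal{C}^1(\overline{I_2})$ embedding for the uniform convergence of $\chi_k'$, $\psi_k'$, and the trace convergence along the moving curves) handled correctly.
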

\begin{proof}
Approximating $f$, $\chi$, and~$\psi$ by smooth functions, the result can be traced back to the classical Leibniz rule.
\end{proof}

\subsection{Geometrical Setting and Notations}
We start by expanding the normal vector~$\vct{n}$ of the hyperplane~$\Gamma$ to an orthonormal basis $\mathcal{N} := \left( \vct{n} , \vct{\tau}_1 , \dots , \vct{\tau}_{n-1} \right)$ of the space~$\mathbb{R}^n$. 
Then, we can decompose the position vector~$\vct{x}$ as
\begin{align}
\vct{x} = \eta \vct{n} + \sum_{i=1}^{n-1} t_i \vct{\tau}_i =: \left( \eta , t_1 , \dots , t_{n-1} \right)_\mathcal{N}^T =  \big( \eta , \vct{t}^T \big)^{\! T}_{\! \mathcal{N}} . 
\end{align}
For a function~$f\colon  A \rightarrow B$ with~$A\subset\Omega$, we introduce the notation
\begin{align}
f \left( \vct{x} \right) =: f \left( \eta , t_1 , \dots , t_{n-1} \right) = f\left(\eta,  \vct{t}\right)  .\label{eq:basisnotation}
\end{align}
If $f$ in \cref{eq:basisnotation} is a function with domain~$A\subset \Gamma$, we usually omit the first argument and write $f \left(\vct{x} \right) = f (0, \vct{t} ) =: f (\vct{t} )$.
Besides, we write $D := \smash{\norm{d}_{L^\infty (\Gamma )}}$ for the maximum aperture of the fracture. 

Next, we introduce the parameterizations~$\vct{\pi}_1$, $\vct{\pi}_2$ of the interfaces~$\Gamma_1$, $\Gamma_2$ given by
\begin{align}
\label{eq:parameterization} \vct{\pi}_1 \left( \vct{t}\right) := \big( \! -\! d_1 (\vct{t} ) , \vct{t}^T \big)^{\! T}_{\! \mathcal{N}} , \qquad \vct{\pi}_2 \left( \vct{t} \right) := \big( d_2 (\vct{t} ) , \vct{t}^T \big)^{\! T}_{\! \mathcal{N}}.
\end{align}
In addition, for $i\in\left\{ 1,2\right\}$ and any function~$f^i\colon A_i \rightarrow B_i$ with arbitrary co\-do\-main~$B_i$,  domain~$A_i$ such that $\Gamma_i \subset \overline{A}_i$, and well-defined trace on~$\Gamma_i$, we introduce the notation%
\begin{subequations}
\begin{align}
\restr{f^1}{\Gamma_1} \left( \vct{t}\right) &= \restr{f^1}{\Gamma_1} \left( t_1 , \dots , t_{n-1}\right) := f^1\left( -d_1\left( \vct{t}\right), \vct{t} \right) , \\
\restr{f^2}{\Gamma_2} \left( \vct{t}\right) &= \restr{f^2}{\Gamma_2} \left( t_1 , \dots , t_{n-1}\right) := f^2\left( d_2\left( \vct{t}\right), \vct{t} \right)
\end{align}%
\label{eq:interfacerestr}%
\end{subequations}
for the trace on the interfaces~$\Gamma_1$ and $\Gamma_2$. 
Then, for $i\in\left\{ 1,2\right\}$, a surface integral on~$\Gamma_i$ can be transformed into an integral on~$\Gamma$ according to the relation
\begin{align}
\int_{\Gamma_i} f^i \,\mathrm{d}\sigma = \int_\Gamma \, \restr{f^i}{\Gamma_i} \sqrt{ 1 + \abs{\nabla d_i}^2} \,\mathrm{d}\sigma . \label{eq:Gammatransform}
\end{align} 
Moreover, we introduce jump and average operators across~$\Gamma$. 
\begin{definition}[Jump and average operators] \label{def:jumpavg}
Let $f \colon A \rightarrow \mathbb{R}$, $\vct{F}\colon A \rightarrow \mathbb{R}^n$ be functions with domain~$A\subset \Omega$ and a well-defined trace on the interfaces~$\Gamma_1$ and $\Gamma_2$. Then, for $\smash{( 0 , \vct{t}^T )_\mathcal{N}^T} \in \Gamma$, using the notation from \cref{eq:interfacerestr}, we denote by 
\begin{subequations}
\begin{align}
\jump{f}\left(\vct{t}\right) &:= \restr{f}{\Gamma_2} \!\left(\vct{t}\right) - \restr{f}{\Gamma_1} \!\left(\vct{t}\right) , \\
\begin{split}
\jump{\vct{F}}\left(\vct{t}\right) &:= \restr{\vct{F}}{\Gamma_1} \! \left(\vct{t}\right)  \cdot \left[ \vct{n} + \nabla d_1\left( \vct{t}\right) \right] - \restr{\vct{F}}{\Gamma_2} \! \left(\vct{t}\right)  \cdot \left[ \vct{n} - \nabla d_2 \left(\vct{t}\right) \right] 
\end{split}
\end{align}
\end{subequations}
the jump operators of $f$ and $\vct{F}$ across~$\Gamma$. In addition, we define by
\begin{subequations}
\begin{align}
\avg{f}\left(\vct{t}\right) &:= \frac{1}{2} \left( \restr{f}{\Gamma_1} \!\left(\vct{t}\right) + \restr{f}{\Gamma_2} \!\left(\vct{t}\right) \right), \\
\begin{split}
\avg{\vct{F}}\left(\vct{t}\right) &:= \frac{1}{2} \left( \restr{\vct{F}}{\Gamma_1} \! \left(\vct{t}\right)  \cdot \left[ \vct{n} + \nabla d_1\left( \vct{t}\right) \right] + \restr{\vct{F}}{\Gamma_2} \! \left(\vct{t}\right)  \cdot \left[ \vct{n} - \nabla d_2 \left(\vct{t}\right)  \right] \right)
\end{split}
\end{align}
\end{subequations}
the average operators of $f$ and $\vct{F}$ across~$\Gamma$.
\end{definition}
We note that, for vector-valued functions, the jump and average operators in \Cref{def:jumpavg} explicitly depend on the geometry of the fracture since they involve gradients of the aperture functions~$d_1$ and~$d_2$.  

It is evident that a reduced model cannot capture all information from the full-dimensional model~\eqref{eq:weakdarcy}. 
In particular, inside the fracture domain~$\Omega_\mathrm{f}$, we will restrict ourselves to a subspace of test functions
\begin{align}
\Phi_\mathrm{f} := \left\{ \phi_\mathrm{f} \in H^1_{0,\rho_\mathrm{f} } \left( \Omega_\mathrm{f}\right) \ \middle\vert \ \partial_\eta \phi_\mathrm{f} = 0   \right\} \subsetneq H^1_{0,\rho_\mathrm{f}} \left( \Omega_\mathrm{f}\right) ,\label{eq:Phif}
\end{align}
i.e., we will only consider test functions~$\phi_\mathrm{f} \in \Phi_\mathrm{f}$ that are invariant in perpendicular direction to~$\Gamma$. Moreover, we introduce new reduced quantities on the interface~$\Gamma$ that are obtained by averaging along straight lines perpendicular to~$\Gamma$ in~$\Omega_\mathrm{f}$. Specifically, for $\smash{( 0 , \vct{t}^T)_\mathcal{N}^T \in \Gamma}$, we define the average pressure~$p_\Gamma$ inside the fracture, the total source term~$q_\Gamma$, and, for any test function~$\phi_\mathrm{f} \in \Phi_\mathrm{f}$, the averaged test function~$\phi_\Gamma$ by%
\begin{subequations}
\begin{align}
p_\Gamma \left( \vct{t}\right) &:= \frac{1}{d\left(\vct{t}\right)} \int_{-d_1\left(\vct{t}\right)}^{d_2\left(\vct{t}\right)} p_\mathrm{f} \left(\eta , \vct{t}\right) \,\mathrm{d}\eta , \label{eq:pGamma}\\
q_\Gamma \left( \vct{t} \right) &:= \int_{-d_1\left(\vct{t}\right)}^{d_2\left(\vct{t}\right)} q_\mathrm{f} \left(\eta , \vct{t}\right) \, \mathrm{d}\eta , \label{eq:qGamma} \\
\phi_\Gamma \left(\vct{t} \right) &:= \frac{1}{d\left(\vct{t}\right)} \smash{\int_{-d_1\left(\vct{t}\right)}^{d_2\left(\vct{t}\right)}} \phi_\mathrm{f} \left(\eta , \vct{t}\right) \,\mathrm{d}\eta . \label{eq:phiGamma}
\end{align}%
\label{eq:effective}%
\end{subequations} 
Then, due to the definition of the space~$\Phi_\mathrm{f}$ in \cref{eq:Phif}, we have $\phi_\Gamma \left( \vct{t}\right) = \phi_\mathrm{f}\left( \eta , \vct{t}\right)$ for a.a.~$\smash{ ( \eta , \vct{t}^T )^T_\mathcal{N}} \in \Omega_\mathrm{f}$.
Further, we define the effective permeability~$\matr{K}_\Gamma$ of the fracture as the mean of $\matr{K}_\mathrm{f}$ in normal direction, i.e.,
\begin{subequations} 
\begin{align}
\matr{K}_\Gamma \left( \vct{t}\right) :=  \frac{1}{d\left(\vct{t}\right)} \int_{-d_1\left(\vct{t}\right)}^{d_2\left(\vct{t}\right)} \matr{K}_\mathrm{f} \left( \eta , \vct{t}\right) \,\mathrm{d}\eta .
\end{align}%
Then, for $\smash{( \eta , \vct{t}^T )_\mathcal{N}^T \in \Omega_\mathrm{f}}$, we have
\begin{align}
\matr{K}_\mathrm{f}\left(\eta , \vct{t} \right) = \matr{K}_\Gamma\left(\vct{t}\right) + \mathcal{O}\left( d \left(\vct{t}\right)\right) 
\end{align}
if $\matr{K}_\mathrm{f} $ is continuously differentiable with respect to~$\eta$. 
\label{eq:KGamma}%
\end{subequations}
We assume that $\matr{K}_\Gamma$ is block-diagonal with respect to the basis~$\mathcal{N}$, i.e., 
\begin{align}
\matr{K}_\Gamma^\mathcal{N} = 
\left[ 
\begin{array}{c|c} 
  K_{\Gamma , \vct{n} } & \vct{0} \\ 
  \hline 
  \vct{0} & \matr{K}_{\Gamma , \vct{t}} 
\end{array} 
\right] ,
\end{align}
where $K_{\Gamma , \vct{n} } \colon \Omega_\mathrm{f} \rightarrow \mathbb{R}$ and $\matr{K}_{\Gamma , \vct{t}} \colon \Omega_\mathrm{f} \rightarrow \mathbb{R}^{(n-1) \times (n-1)}$.
Moreover, for any $\smash{( 0 , \vct{t}^T)^T_{\mathcal{N}}} \in \Gamma$, we denote by ${\vct{r}_\vct{t}\colon \left( -d_1\left(\vct{t}\right) , d_2\left(\vct{t}\right)\right) \rightarrow \mathbb{R}^n}$ a continuously differentiable path such that%
\begin{subequations}
\begin{alignat}{2}
\vct{r}_\vct{t} \left( - d_1 \left( \vct{t} \right)\right) &= \big(\! -\! d_1 ( \vct{t} ) ,\, \vct{t}^T \big)^{\! T}_{\!\mathcal{N} }, \qquad
 &\vct{r}_\vct{t} \left( d_2 \left( \vct{t} \right)\right) &= \big( d_2 ( \vct{t} ) ,\, \vct{t}^T \big)^{\! T}_{\!\mathcal{N}} ,\\
\dot{\vct{r}}_\vct{t} \left( - d_1 \left( \vct{t} \right)\right) &= -\vct{n}_1 \left( \vct{t} \right)\sqrt{1 + \abs{\nabla d_1 \left(\vct{t}\right)}^2} , \qquad
&\dot{\vct{r}}_\vct{t} \left( d_2 \left( \vct{t} \right)\right) &= \vct{n}_2\left(\vct{t}\right)\sqrt{1 + \abs{\nabla d_2\left(\vct{t}\right)}^2} .
\end{alignat}%
\label{eq:rt}%
\end{subequations}
Additionally, we assume that $\dot{\vct{r}}_\vct{t}\left( s\right) \! = \! \totalDer{}{s} \vct{r}_\vct{t} \left( s \right)$ is an eigenvector of the permeability~$\matr{K}_\mathrm{f}\left( \vct{r}_\vct{t} \left(  s\right)\right)$ with eigenvalue $K_\mathrm{f}^\perp \left( \vct{r}_\vct{t} \left( s\right)\right)$ for a.a.~$s \in \left( -d_1\left(\vct{t}\right) , d_2\left(\vct{t}\right)\right)$. 
Then, we can define the effective permeability~$K_\Gamma^\perp$ in normal direction as the mean value
\begin{subequations}
\begin{align}
K_\Gamma^\perp \left( \vct{t}\right) := \frac{1}{L\left(\vct{r}_\vct{t}\right)} \int_{\vct{r}_\vct{t}} K_\mathrm{f}^\perp \,\mathrm{d} r ,
\end{align}
where $L\left(\vct{r}_\vct{t}\right)$ denotes the arc length of the path $\vct{r}_\vct{t}$. If $K_\mathrm{f}^\perp \circ \vct{r}_\vct{t}$ is continuously differentiable, we have
\begin{align}
K_\mathrm{f}^\perp \left( \vct{r}_\vct{t} \left( s \right)\right) = K_\Gamma^\perp \left( \vct{t}\right) + \mathcal{O} \left( d\left( \vct{t}\right)\right)   \label{eq:KperpApprox}
\end{align}  \label{eq:Kperp}
for $s \in \left( -d_1\left(\vct{t}\right) , d_2\left(\vct{t}\right)\right)$.
\end{subequations}

\subsection{Averaging Across the Fracture}
In the following, proceeding from the weak formulation~\eqref{eq:weakdarcy} of the domain-decomposed Darcy problem~\eqref{eq:darcydecomposed}, we derive a relation that governs the effective pressure~$p_\Gamma$ inside the reduced fracture~$\Gamma$.

Let $\phi_\mathrm{f} \in \Phi_\mathrm{f}$. By splitting the integral over~$\Omega_\mathrm{f}$ in \cref{eq:L} into an integral over~$\Gamma$ and a line integral in normal direction, we obtain  
\begin{align}
\int_{\Omega_\mathrm{f}} q_\mathrm{f} \phi_\mathrm{f} \,\mathrm{d} V &= \int_\Gamma \phi_\mathrm{f} \int_{-d_1\left(\vct{t}\right)}^{d_2\left(\vct{t}\right)} q_\mathrm{f} \, \mathrm{d}\eta \mathrm{d}\vct{t} = \int_\Gamma  q_\Gamma \phi_\Gamma \,\mathrm{d}\sigma  .
\end{align}
Likewise, splitting the integral over~$\Omega_\mathrm{f}$ in \cref{eq:B} and using \cref{eq:KGamma} results in
\begin{align}
\begin{split}
&\int_{\Omega_\mathrm{f}} \matr{K}_{\mathrm{f}} \nabla p_{\mathrm{f}} \cdot \nabla \phi_{\mathrm{f}} \,\mathrm{d} V = \int_\Gamma \nabla \phi_\mathrm{f} \cdot \! \int_{-d_1\left(\vct{t}\right)} ^{d_2\left(\vct{t}\right)} \matr{K}_\mathrm{f} \nabla p_\mathrm{f} \,\mathrm{d}\eta \mathrm{d}\vct{t} \\
&\hspace{2.25cm} =  \int_\Gamma \matr{K}_\Gamma \nabla   \phi_\mathrm{f} \cdot\! \int_{-d_1\left(\vct{t}\right)}^{d_2\left(\vct{t}\right)} \nabla p_\mathrm{f} \,\mathrm{d}\eta \mathrm{d}\vct{t} \, +\, \mathcal{O}\left( D\right)\\
&\hspace{2.25cm}  = \int_\Gamma  \matr{K}_\Gamma \nabla \phi_\Gamma \cdot \left[ \nabla \left( dp_\Gamma \right) - \restr{p_1}{\Gamma_1} \!\nabla d_1 - \restr{p_2}{\Gamma_2} \!\nabla d_2 \right] \,\mathrm{d}\sigma\, + \, \mathcal{O}\left( D\right) .
\end{split}%
\label{eq:splitting}%
\end{align}
Here, we have used \Cref{lem:leibniz} given the assumption that $d_1, d_2 \in W^{1,\infty } (\Gamma ) \cap H^2 (\Gamma )$. 
Besides, we have utilized the continuity condition for the pressure from the definition of the space~$V_\mathrm{dd}$ in \cref{eq:Vdd}.
We remark that the calculation in \cref{eq:splitting} is exact if the permeability~$\matr{K}_\mathrm{f}$ is constant along~$\vct{n}$, i.e., in perpendicular direction to~$\Gamma$.

Further, by transforming the integrals over the interfaces~$\Gamma_i$ in~\cref{eq:weakdarcy}, $i\in\left\{1,2\right\}$, into integrals over~$\Gamma$ according to \cref{eq:Gammatransform}, one finds
\begin{align}
\begin{split}
\sum_{i=1,2}\int_{\Gamma_i}\! \big[ \phi_\mathrm{f} - \phi_i \big] \matr{K}_i \nabla p_i \cdot \vct{n}_i \,\mathrm{d} \sigma  = \! \sum_{i=1,2} \int_\Gamma \left[ \phi_\Gamma  - \restr{\phi_i}{\Gamma_i}\right] \restr{\left( \matr{K}_i \nabla p_i\right)}{\Gamma_i}\! \cdot \vct{n}_i \sqrt{1 + \abs{\nabla d_i}^2} \,\mathrm{d}\sigma .  \label{eq:coupling}
\end{split}
\end{align}

Thus, in summary, the weak formulation of the reduced model up until now reads as follows.
Find $p = \left( p_1 , p_2 , p_\Gamma\right)$ such that
\begin{align}
\begin{split}
& \sum_{i = 1,2} \int_{\Omega_i} \matr{K}_i \nabla p_i \cdot \nabla \phi_i \,\mathrm{d}V + \int_\Gamma \matr{K}_\Gamma \nabla \phi_\Gamma \cdot \Big[ \nabla \left( dp_\Gamma \right) - \sum_{i=1,2} \restr{p_i}{\Gamma_i} \nabla d_i \Big] \,\mathrm{d}\sigma \\
&- \smash{\sum_{i=1,2}} \int_\Gamma \left[ \phi_\Gamma  - \restr{\phi_i}{\Gamma_i}\right] \restr{\left( \matr{K}_i \nabla p_i\right)}{\Gamma_i}\! \cdot \vct{n}_i \sqrt{1 + \abs{\nabla d_i}^2} \,\mathrm{d}\sigma \\
&\hspace{6.7cm} = \sum_{i=1,2} \int_{\Omega_i} q_i \phi_i \,\mathrm{d} V + \int_\Gamma q_\Gamma \phi_\Gamma \,\mathrm{d}\sigma
\end{split}%
\label{eq:weaksummary}%
\end{align}%
holds for all test functions $\phi = \left( \phi_1 , \phi_2 , \phi_\Gamma \right)$. 

Moreover, the weak problem in \cref{eq:weaksummary} corresponds to the following strong formulation.
Find $p = \left( p_1, p_2, p_\Gamma \right)$ such that
\begin{subequations}
\begin{alignat}{3}
-\nabla\cdot \left( \matr{K}_i\nabla p_i  \right) &= q_i \qquad &&\text{in } \Omega_i , \quad &&i\in\left\{1,2\right\} , \label{eq:bulk1}\\
-\nabla \cdot \Big[ \matr{K}_\Gamma \Big( \nabla \left( dp_\Gamma \right) -\! \smash{\sum_{i=1,2}} \restr{p_i}{\Gamma_i} \!\nabla d_i  \Big)  \Big] &= q_\Gamma -\jump{\matr{K}\nabla p} \enspace\quad && \text{in } \Gamma , \label{eq:Gamma1}\\
p_i &= 0 \qquad &&\text{on } \rho_i , \quad &&i\in\left\{1,2\right\} ,\label{eq:bulk2} \\
p_\Gamma &= 0 \qquad &&\text{on } \partial\Gamma  .\label{eq:Gamma2}
\end{alignat}%
\label{eq:decoupled}%
\end{subequations}%
We observe that the system in \cref{eq:decoupled} is decoupled.
Given a solution~$(p_1,p_2)$ of the bulk problem~\eqref{eq:bulk1},~\eqref{eq:bulk2}, which, so far, is independent of~$p_\Gamma$, the effective pressure~$p_\Gamma$ inside the fracture is obtained from the solution of the problem~\eqref{eq:Gamma1},~\eqref{eq:Gamma2}. 
However, in order to obtain a wellposed problem, we will have to supplement the bulk problem~\eqref{eq:bulk1},~\eqref{eq:bulk2} by two additional boundary conditions at the fracture~$\Gamma$. 
In general, these conditions will rely on the effective pressure~$p_\Gamma$ inside the fracture, which is why we refer to them as coupling conditions. 

\subsection{Coupling Conditions} \label{sec:sec32}
\subsubsection{First Coupling Condition}
For the derivation of a first coupling condition, we fixate $\smash{( 0 , \vct{t}^T )^T_\mathcal{N}} \in \Gamma$  and consider the line integral of~$\matr{K}_\mathrm{f} \nabla p_\mathrm{f}$ along the curve~$\vct{r}_\vct{t}$ specified in \cref{eq:rt}. 
Then, applying the trapezoidal rule yields
\begin{align}
\int_{\vct{r}_\vct{t}} \matr{K}_\mathrm{f} \nabla p_\mathrm{f} \cdot \mathrm{d} \vct{r}  =d\left(\vct{t}\right) \avg{\matr{K}\nabla p} \left(\vct{t}\right) + \mathcal{O} \left( D^3\right) , \label{eq:intrt1}
\end{align}
where we have used the continuity condition~\eqref{eq:ucontinuity}. 
The approximation error in \cref{eq:intrt1} holds true if $\matr{K}_\mathrm{f} \left( \vct{r}_\vct{t} ( \cdot )\right) \nabla p_\mathrm{f}\left(\vct{r}_\vct{t} (\cdot )\right) \cdot \dot{\vct{r}}_\vct{t} (\cdot )$ is two times continuously differentiable. 

Further, using that by assumption $\dot{\vct{r}}_\vct{t}$ is an eigenvector of~$\matr{K}_\mathrm{f}$, we obtain
\begin{align}
\begin{split}
\int_{\vct{r}_\vct{t}} \matr{K}_\mathrm{f} \nabla p_\mathrm{f} \cdot \mathrm{d} \vct{r} &= \int_{-d_1\left(\vct{t}\right)}^{d_2\left(\vct{t}\right)} \matr{K}_\mathrm{f} \left( \vct{r}_\vct{t}\left( s\right)\right) \nabla p_\mathrm{f} \left( \vct{r}_\vct{t}\left( s\right)\right) \cdot \dot{\vct{r}}_\vct{t}\left( s\right) \,\mathrm{d}s \\
&= K_\Gamma^\perp \left(\vct{t}\right) \int_{\vct{r}_\vct{t}} \nabla p_\mathrm{f} \cdot \mathrm{d} \vct{r} \, +\, \mathcal{O}\left( D \right) = K_\Gamma^\perp \left(\vct{t}\right) \jump{p}\left(\vct{t}\right) \,+\, \mathcal{O}\left( D \right), \label{eq:intrt2}
\end{split}
\end{align}
where we have used \cref{eq:Kperp} and the pressure continuity from~\cref{eq:Vdd}.
The calculation in \cref{eq:intrt2} is exact if the eigenvalue~$\smash{K_\mathrm{f}^\perp}$ is constant along the curve~$\vct{r}_\vct{t}$.

Now, combining \cref{eq:intrt1,eq:intrt2} suggests the coupling condition
\begin{align}
\avg{\matr{K}\nabla p}  &= \frac{ K_\Gamma^\perp}{d} \jump{p} . \label{eq:coupling1}
\end{align} 
We remark that, by \Cref{def:jumpavg}, the coupling condition~\eqref{eq:coupling1} depends on the gradients of the aperture functions~$d_1$ and~$d_2$. 

\subsubsection{Second Coupling Condition}
For the derivation of a second coupling condition along~$\Gamma$, we fixate $\smash{( 0 , \vct{t}^T)^T_\mathcal{N}} \in \Gamma$ and consider the definition of the mean pressure~$p_\Gamma$ in \cref{eq:pGamma}. 

Let $\psi_1, \psi_2 \in \mathcal{C}^\infty_\mathrm{c} \left( \mathbb{R}\right)$ with $0 \le \psi_i \le 1$ for $i\in\left\{1,2\right\}$, $\psi_1\left( - d_1\left(\vct{t}\right)\right) = \psi_2\left(d_2\left(\vct{t}\right)\right) = 1$, and $\mathrm{supp}\left(\psi_1\right) \subset B_1 \left( - d_1\left(\vct{t}\right)\right)$, $\mathrm{supp}\left(\psi_2\right) \subset B_1\left(d_2\left(\vct{t}\right)\right)$. 
Here, $B_r\left( z\right)$ denotes the interval~$B_r\left( z\right) := \left( z - r, z+r\right)$. 
Further, for $\epsilon > 0$, we define the functions $\psi_1^\epsilon , \psi_2^\epsilon \in \mathcal{C}_\mathrm{c}^\infty\left(\mathbb{R}\right)$ by
\begin{align}
\psi_1^\epsilon \left( s\right) := \psi_1 \left( \frac{s + d_1(\vct{t})}{\epsilon} - d_1( \vct{t} ) \right) , \quad\enspace
\psi_2^\epsilon \left( s \right) := \psi_2 \left( \frac{s - d_2(\vct{t})}{\epsilon} + d_2(\vct{t} ) \right)
\end{align}
such that $\mathrm{supp}\left(\psi_1^\epsilon\right)\subset B_\epsilon \left( -d_1\left(\vct{t}\right)\right)$ and $\mathrm{supp}\left(\psi_2^\epsilon\right) \subset B_\epsilon \left( d_2\left(\vct{t}\right)\right)$. 
In addition, for $\epsilon > 0$, let $\Psi_1^\epsilon$ and $\Psi_2^\epsilon$ be antiderivatives of $\psi_1^\epsilon$ and $\psi_2^\epsilon$ such that $\Psi_1^\epsilon \left( s\right) \rightarrow 0$ for $ s \rightarrow\infty$ and $\Psi_2^\epsilon\left( s\right) \rightarrow 0$ for $ s \rightarrow -\infty$. 
Then, for $ s \in \left( -d_1\left(\vct{t}\right) , d_2\left(\vct{t}\right)\right)$, we define the curve~$\vct{c}_\vct{t}^\epsilon \colon (-d_1\left(\vct{t}\right) , d_2\left(\vct{t}\right)) \rightarrow \mathbb{R}^n$ by
\begin{align}
\vct{c}_\vct{t}^\epsilon \left( s \right) := \big( s , \vct{t}^T \big)^{\! T}_{\! \mathcal{N}} + \Psi_1^\epsilon \left( s \right) \nabla d_1\left(\vct{t}\right) - \Psi_2^\epsilon \left( s \right) \nabla d_2\left(\vct{ t}\right) . \label{eq:cteps}
\end{align}
We remark that, since $\Psi_1^\epsilon$ and $\Psi_2^\epsilon$ vanish as $\epsilon \rightarrow 0$, the curve~$\vct{c}_\vct{t}^\epsilon$ lies inside the fracture domain~$\Omega_\mathrm{f}$ if $\epsilon > 0$ is sufficiently small. 
Besides, one can observe that
\begin{align}
\dot{\vct{c}}_\vct{t}^\epsilon \left( s\right) = \vct{n} + \psi_1^\epsilon \left( s\right) \nabla d_1\left( \vct{t}\right) - \psi_2^\epsilon\left( s\right) \nabla d_2\left(\vct{t}\right)
\end{align}
for $s \in \left( -d_1\left(\vct{t}\right) , d_2\left(\vct{t}\right)\right)$.
Consequently, if $\epsilon > 0 $ is sufficiently small, we have
\begin{align}
\label{eq:cdot} \dot{\vct{c}}_\vct{t}^\epsilon \left(-d_1\left(\vct{t}\right)\right) &= -\vct{n}_1 \smash{\sqrt{1 + \abs{\nabla d_1\left(\vct{t}\right)}^2}} , \qquad  \dot{\vct{c}}_\vct{t}^\epsilon \left(d_2\left(\vct{t}\right)\right) = \vct{n}_2 {\sqrt{1 + \abs{\nabla d_2\left(\vct{t}\right)}^2}} . 
\end{align}
Further, assuming that $p_\mathrm{f}$ is bounded in $\smash{\overline{\Omega}_\mathrm{f}}$, we have for a.a.~$\smash{( 0, \vct{t}^T )^T_\mathcal{N} \in \Gamma}$ that
\begin{align}
\frac{1}{d\left(\vct{t}\right)}\int_{\vct{c}_\vct{t}^\epsilon} p_\mathrm{f}\, \mathrm{d} r \rightarrow p_\Gamma \left(\vct{t}\right) \qquad \text{for } \epsilon \rightarrow 0 . \label{eq:epslimit}
\end{align}

Next, we approximate the pressure~$p_\mathrm{f}$ in~$\Omega_\mathrm{f}$ along the curve~$\vct{c}_\vct{t}^\epsilon$ by means of the third-order Hermite interpolation polynomial~$\pi_\vct{t}^\epsilon$ defined by the following conditions.%
\begin{subequations}
\begin{align}
\pi_\vct{t}^\epsilon \left( -d_1\left(\vct{t}\right)\right) &= \restr{p_1}{\Gamma_1}\!\left(\vct{t}\right) , \\
\pi_\vct{t}^\epsilon \left( d_2\left(\vct{t}\right)\right) &= \restr{p_2}{\Gamma_2}\!\left(\vct{t}\right) , \\
\label{eq:polycond_c} \dot{\pi}_\vct{t}^\epsilon \left(-d_1\left(\vct{t}\right)\right) &= \restr{\nabla p_\mathrm{f}\left( \vct{c}_\vct{t}^\epsilon\left( s\right) \right) \cdot \dot{\vct{c}}_\vct{t}^\epsilon \left( s\right)}{ s = - d_1\left(\vct{t}\right)} , \\
\label{eq:polycond_d} \dot{\pi}_\vct{t}^\epsilon \left(d_2\left(\vct{t}\right)\right) &= \restr{\nabla p_\mathrm{f}\left( \vct{c}_\vct{t}^\epsilon\left( s\right) \right) \cdot \dot{\vct{c}}_\vct{t}^\epsilon \left( s\right)}{ s =  d_2\left(\vct{t}\right)} .
\end{align}%
\label{eq:polycond}%
\end{subequations}
Assuming that $\smash{\restr{K_\mathrm{f}^\perp}{\Gamma_i}}$ and $\smash{\restr{\left[ \matr{K}_i\nabla p_i\right]}{\Gamma_i}}$ are continuous for $i\in\left\{ 1,2\right\}$, we have
\begin{subequations}
\begin{align}
\restr{\smash{K_\mathrm{f}^\perp}}{\Gamma_i}\! \left( \vct{t} + \Delta \vct{t} \right) &= \restr{\smash{K_\mathrm{f}^\perp}}{\Gamma_i}\! \left( \vct{t} \right) +  {\scriptstyle\mathcal{O}}\big( \abs{\Delta \vct{t}}^0\big), \\
\restr{\left[ \matr{K}_i\nabla p_i\right]}{\Gamma_i}\! \left( \vct{t} + \Delta \vct{t} \right) &= \restr{\left[ \matr{K}_i\nabla p_i\right]}{\Gamma_i}\! \left( \vct{t} \right) +  {\scriptstyle\mathcal{O}}\big( \abs{\Delta \vct{t}}^0\big)
\end{align}%
\label{eq:smallo}%
\end{subequations}
for $\smash{( 0, \vct{t}^T )^T_\mathcal{N} ,\, ( 0, \vct{t}^T\! + \Delta \vct{t}^T)^T_\mathcal{N} \in \Gamma}$ with~$\Delta \vct{ t} \in \mathbb{R}^{n-1}$. 
Thus, using \cref{eq:Kperp} and \cref{eq:smallo}, we can express the interpolation conditions~\eqref{eq:polycond_c} and~\eqref{eq:polycond_d} as 
\begin{subequations}
\begin{align}
\dot{\pi}_\vct{t}^\epsilon \left(-d_1\left(\vct{t}\right)\right) &=   -\frac{\restr{\left[\matr{K}_1 \nabla p_1\right]}{\Gamma_1} \!\left(\vct{t}\right)}{K_\Gamma^\perp \left(\vct{t}\right)}   \cdot \vct{n}_1 \left(\vct{t}\right) \sqrt{1 + \abs{\nabla d_1 \left(\vct{t}\right)}^2} + \mathcal{O} \left( D \right) +  {\scriptstyle\mathcal{O}}\left( \epsilon^0\right),  \\
\dot{\pi}_\vct{t}^\epsilon \left(d_2\left(\vct{t}\right)\right) &= \frac{\restr{\left[\matr{K}_2 \nabla p_2\right]}{\Gamma_2} \!\left(\vct{t}\right)}{K_\Gamma^\perp \left(\vct{t}\right)} \cdot \vct{n}_2 \left(\vct{t}\right)\sqrt{1 + \abs{\nabla d_2 \left(\vct{t}\right)}^2} + \mathcal{O} \left( D \right) +  {\scriptstyle\mathcal{O}}\left( \epsilon^0\right)
\end{align}%
\label{eq:couplinglandau}
\end{subequations}
if~$\epsilon > 0$ is sufficiently small.
Further, for $s\in\left( -d_1\left(\vct{ t} \right) , d_2\left(\vct{t}\right)\right)$, one has
\begin{align}
\pi_\vct{t}^\epsilon \left( s \right) := \sum_{i=0}^3 \alpha_{\vct{t}, i}^\epsilon  s^i  = p_\mathrm{f} \left( \vct{c}_\vct{t}^\epsilon\left( s\right)\right) + \mathcal{O} \big( D^4 \big) \label{eq:interpolation}
\end{align}
if $p_\mathrm{f}$ is four times continuously differentiable along~$\vct{c}_\vct{t}^\epsilon$.
Since the polynomial~$\pi_\vct{t}^\epsilon$ is uniquely defined by the conditions in \cref{eq:polycond}, we can determine explicit expressions for the coefficients~$\alpha_{\vct{t},i}^\epsilon$, $i \in \{ 0,\dots , 3\}$. 
Specifically, we obtain 
\begin{subequations}
\begin{align}
\alpha_{\vct{t}, 3}^\epsilon &= \mathcal{O} \big( d\left(\vct{t}\right)^{-1} \big) +  {\scriptstyle\mathcal{O}}\left( \epsilon^0\right) , \\
\alpha_{\vct{t}, 2}^\epsilon &= -\frac{1}{2 K_\Gamma^\perp\left(\vct{t}\right) d\left(\vct{t}\right)} \jump{\matr{K}\nabla p} \left(\vct{t}\right) + \mathcal{O} \big( d\left(\vct{t}\right)^0 \big) +  {\scriptstyle\mathcal{O}}\left( \epsilon^0\right),  \\
\alpha_{\vct{t}, 1}^\epsilon &= \frac{1}{d\left(\vct{t}\right)} \jump{p}\left(\vct{t}\right) -  \big[ d_2\left(\vct{t}\right) - d_1\left(\vct{t}\right)\big]\alpha_{\vct{t} , 2} + \mathcal{O} \big( d\left(\vct{t}\big)\right) +  {\scriptstyle\mathcal{O}}\left( \epsilon^0\right), \\
\begin{split}
\alpha_{\vct{t}, 0}^\epsilon &= \avg{p}\left(\vct{t}\right) - d_1\left(\vct{t}\right) d_2\left(\vct{t}\right) \alpha_{\vct{t} , 2}   - \frac{d_2\left(\vct{t}\right) - d_1\left(\vct{t}\right)}{2 K_\Gamma^\perp \left(\vct{t}\right)} \avg{\matr{K}\nabla p } \left(\vct{t}\right) \\
&\quad\; + \mathcal{O} \big( d\left(\vct{t}\right)^2 \big) +  {\scriptstyle\mathcal{O}}\left( \epsilon^0\right),
\end{split}
\end{align}%
\label{eq:coefficients}%
\end{subequations}
where we have utilized the first coupling condition~\eqref{eq:coupling1} as well as \cref{eq:couplinglandau}.

As a result, we can approximate the mean pressure~$p_\Gamma$ by
\begin{subequations}
\begin{align}
p_\Gamma \left(\vct{t} \right)  &= \lim_{\epsilon\rightarrow 0}\frac{1}{d\left(\vct{t}\right) } \int_{\vct{c}_\vct{t}^\epsilon} p_\mathrm{f} \,\mathrm{d} r = \lim_{\epsilon\rightarrow 0}\frac{1}{d\left(\vct{t}\right) } \int_{-d_1\left(\vct{t}\right)}^{d_2\left(\vct{t}\right)}\! \pi_\vct{t}^\epsilon\left( s \right) \,\mathrm{d} s + \mathcal{O}\left( D^4\right) .
\end{align}
Besides, with Simpson's rule, we have
\begin{align}
\begin{split}
&\frac{1}{d\left(\vct{t}\right) } \int_{-d_1\left(\vct{t}\right)}^{d_2\left(\vct{t}\right)} \pi_\vct{t}^\epsilon \left( s \right) \,\mathrm{d} s   = \frac{1}{6} \left[ \pi_\vct{t}^\epsilon \left( - d_1 \left(\vct{t} \right) \right) + 4 \pi_\vct{t}^\epsilon \left( \frac{d_2\left(\vct{t}\right) - d_1\left(\vct{t}\right)}{2}\right) + \pi_\vct{t}^\epsilon \left(d_2 \left(\vct{t}\right)\right) \right] . 
\end{split}
\end{align}%
\label{eq:simpson}%
\end{subequations}
Now, substituting the explicit form~\eqref{eq:interpolation} of the polynomial~$\pi_\vct{t}^\epsilon$ with coefficients~\eqref{eq:coefficients} into~\cref{eq:simpson} suggests the coupling condition
\begin{align}
\jump{\matr{K}\nabla p} = \frac{12 K_\Gamma^\perp}{ d} \big( p_\Gamma - \avg{p} \big) . \label{eq:coupling2}%
\end{align}%

We remark that, for a symmetric fracture with constant aperture, i.e., $d_1 = d_2 = d / 2 \equiv \mathrm{const}.$, the coupling conditions~\eqref{eq:coupling1} and \eqref{eq:coupling2} coincide with the coupling conditions formulated in~\cite{martin05} for $\xi = \frac{2}{3}$. In fact, the general form of the coupling conditions in~\cite{martin05} is given by
\begin{align}
\label{eq:martincoupling} \avg{\matr{K}\nabla p } &= \frac{K_\Gamma^\perp}{d} \jump{p} , \quad\enspace 
\jump{\matr{K}\nabla p} = \frac{4K_\Gamma^\perp}{\left(2\xi - 1\right) d} \big( p_\Gamma - \avg{p} \big) \qquad \Big( \xi > \frac{1}{2} \Big).
\end{align}
The second coupling condition in \cref{eq:martincoupling} arises in~\cite{martin05} after a motivation of the cases~$\xi = \smash{\frac{1}{2}}$, $\xi = \smash{\frac{3}{4}}$ and $\xi = 1$, inter alia, by approximating the pressure and velocity inside or at the fracture by means of mean values or differential quotients.
For $\xi = \frac{3}{4}$ and $\xi = 1$, this motivation does not immediately transfer to our situation with a fracture of varying aperture due to different normal vectors on the interfaces~$\Gamma_1$, $\Gamma_2$, and~$\Gamma$.
Besides, the case~$\xi = \frac{1}{2}$, where $p_\Gamma = \avg{p}$ is assumed, was found to be unstable~\cite{martin05}.
We recover the analogy to the model in~\cite{martin05} by also writing the coupling condition in \cref{eq:coupling2} with a general coupling parameter~$\xi > \smash{\frac{1}{2}}$ and, for convenience, we introduce the abbreviation 
\begin{align}
\beta_\Gamma := \frac{4K_\Gamma^\perp}{\left(2\xi - 1\right) d}. \label{eq:beta}
\end{align}
However, the derivation above renders the case $\xi=\smash{\frac{2}{3}}$ as an optimal choice in the sense that the unique lowest-order interpolation polynomial satisfying the conditions in \cref{eq:polycond} has been taken.

Now, by rearranging \cref{eq:coupling1,eq:coupling2} and with~$\beta_\Gamma$ as defined in \cref{eq:beta}, we find that the coupling conditions can also be written as 
\begin{subequations}
\begin{align}
\restr{\matr{K}_1\nabla p_1}{\Gamma_1} \! \cdot \vct{n}_1\sqrt{1 + \abs{\nabla d_1}^2}  &=  \frac{\beta_\Gamma}{2} \big( \avg{p} - p_\Gamma \big)  - \frac{K_\Gamma^\perp}{d} \jump{p}  , \\
\restr{\matr{K}_2\nabla p_2}{\Gamma_2} \! \cdot \vct{n}_2\sqrt{1 + \abs{\nabla d_2 }^2} &= \frac{\beta_\Gamma}{2} \big( \avg{p} - p_\Gamma \big)  + \frac{K_\Gamma^\perp}{d}\jump{p}.
\end{align}%
\label{eq:couplingNew}%
\end{subequations}
Then, the coupling conditions as given in \cref{eq:couplingNew} can be substituted directly into \cref{eq:coupling}. 
This results in the relation
\begin{align} 
\begin{split}
& -\sum_{i=1,2} \int_{\Gamma_i} \big[ \phi_\mathrm{f} - \phi_i \big] \matr{K}_i \nabla p_i \cdot \vct{n}_i \,\mathrm{d} \sigma \\
&\hspace{3cm} = \int_\Gamma \frac{K_\Gamma^\perp}{d} \jump{p} \jump{\phi} \,\mathrm{d}\sigma 
 + \int_\Gamma \beta_\Gamma  \big( p_\Gamma - \avg{p} \big)   \big( \phi_\Gamma - \avg{\phi} \big) \,\mathrm{d} \sigma .
\end{split}%
\label{eq:weakcoupling}%
\end{align}
Concluding the model derivation, we can now substitute the relation in \cref{eq:weakcoupling} into \cref{eq:weaksummary}.
The resulting reduced model is summarized in \Cref{sec:sec4} below.

\section{Darcy Flow with Interfacial Fracture} \label{sec:sec4}
In this section, we summarize the new reduced model derived in \Cref{sec:sec3} and discuss its wellposedness. 
Besides, the new model motivates the definition of different model variants with a simplified, less accurate description of the varying fracture aperture. 
These model variants are introduced in \Cref{sec:sec41}.
The geometry of the reduced problem is sketched in \Cref{fig:Omega_Gamma}. 

\begin{figure}[tbh]
\centering
\includegraphics[width=0.6\textwidth]{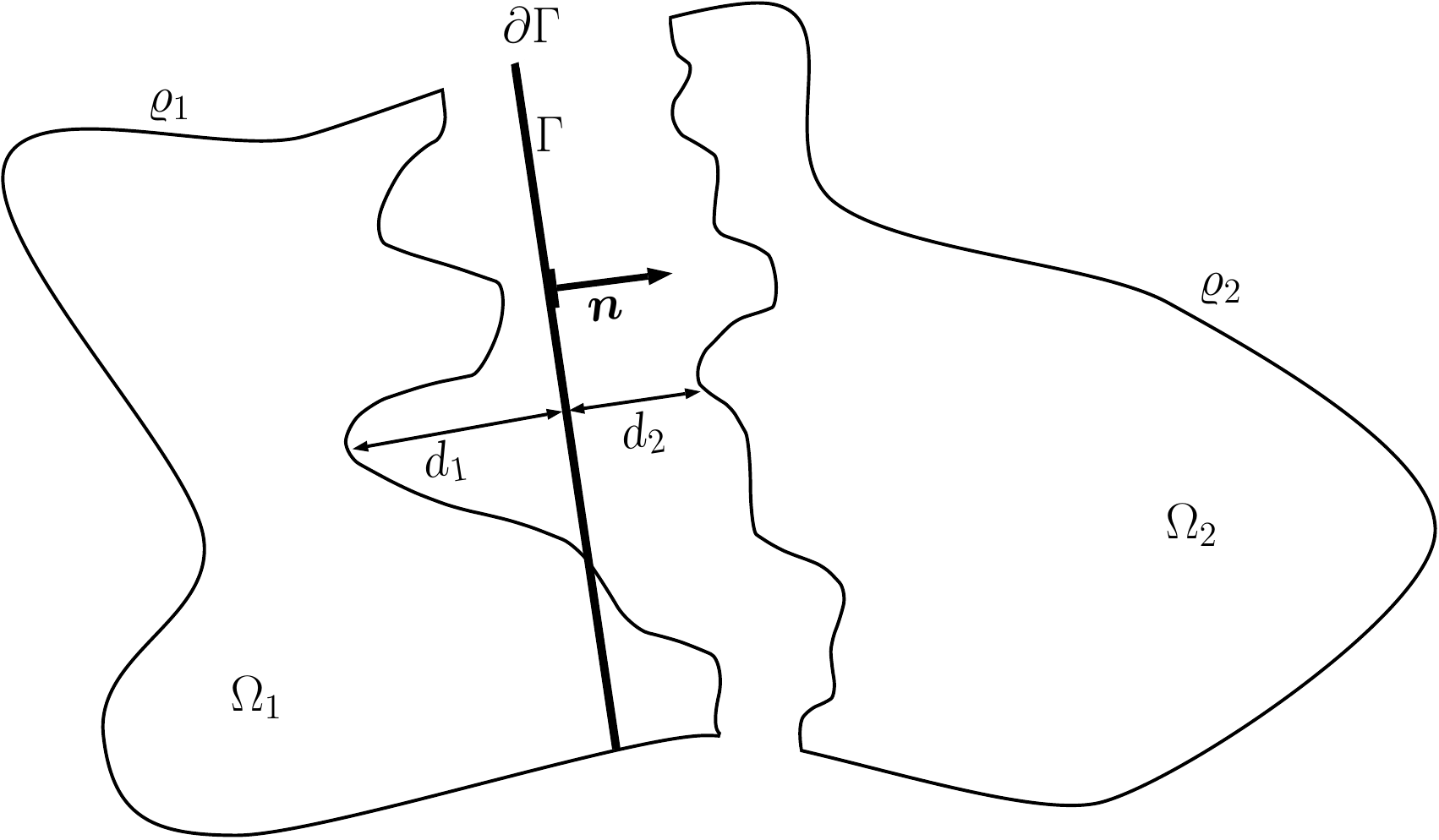}
\caption{Sketch of the geometry for the reduced Darcy problem~\eqref{eq:weak}.}
\label{fig:Omega_Gamma}
\end{figure}%
For $i\in\left\{ 1,2\right\}$, let $q_i \in L^2\left(\Omega_i\right)$, $q_\Gamma \in L^2\left( \Gamma \right)$, and $d_1 , d_2 \in W^{1,\infty} \left(\Gamma\right)$ with $d := d_1 + d_2 > d_\mathrm{min}$ for a constant~$d_\mathrm{min} > 0$. 
Besides, let $\matr{K}_i \in L^\infty\left(\Omega_i ; \mathbb{R}^{n\times n}\right)$ for $i\in\left\{ 1,2\right\}$ and $\matr{K}_\Gamma \in  L^\infty \left( \Gamma ; \mathbb{R}^{n \times n}\right) $ as well as $\smash{K_\Gamma^\perp \in L^\infty \left( \Gamma \right)}$. 
In addition, let $\matr{K}_1$, $\matr{K}_2$, and $\matr{K}_\Gamma$ be symmetric and uniformly elliptic, i.e., there exist constants~$\kappa_\mathrm{max}^\mathrm{b} \ge \kappa_\mathrm{min}^\mathrm{b} > 0 $ and~$\kappa_\mathrm{max}^\Gamma \ge \kappa_\mathrm{min}^\Gamma > 0 $ such that
\begin{subequations}
\begin{align}
\kappa_\mathrm{min}^\mathrm{b} \abs{\vct{z}}_2^2 &\le \matr{K}_i \left( \vct{x}_i \right) \vct{z} \cdot \vct{z} \le \kappa_\mathrm{max}^\mathrm{b} \abs{\vct{z}}_2^2 , \label{eq:lambda_ineq1}\\
\kappa_\mathrm{min}^\Gamma \abs{\vct{z}}_2^2 & \le \,  \matr{K}_\Gamma \left( \vct{t} \right) \vct{z} \cdot \vct{z} \,  \le \kappa_\mathrm{max}^\Gamma \abs{\vct{z}}_2^2 \label{eq:lambda_ineq2}
\end{align}
for all $\vct{z} \in \mathbb{R}^n$.
Besides, we require that 
\begin{align}
\kappa_\mathrm{min}^\Gamma &\le K_\Gamma^\perp \left( \vct{t} \right) \le \kappa_\mathrm{max}^\Gamma .
\end{align}%
\label{eq:lambda_ineq}%
\end{subequations}
The conditions in~\cref{eq:lambda_ineq} are supposed to hold for almost every~$\vct{x}_i \in \Omega_i$, $i\in\left\{ 1,2\right\}$, and $\smash{( 0 , \vct{t}^T)^T_{\mathcal{N}}} \in \Gamma$.
Moreover, let $\xi > \frac{1}{2}$ and $\beta_\Gamma \in L^\infty (\Gamma )$ be defined as in \cref{eq:beta}.

Further, we define the solution and test function spaces
\begin{align}
\Phi_\mathrm{b} &:= \bigtimes\nolimits_{i=1,2} H^1_{0, \rho_i }\left( \Omega_i \right), \quad\enspace
\Phi_\Gamma := H^1_0\left( \Gamma\right) , \quad\enspace 
\Phi := \Phi_\mathrm{b} \times \Phi_\Gamma .
\end{align} 
The space~$\Phi$ is equipped with the norm~$\norm{\cdot}_\Phi$ defined by 
\begin{align}
\norm{\phi}_\Phi^2 := \norm{\phi_\mathrm{b}}_{\Phi_\mathrm{b}}^2 + \norm{\phi_\Gamma}_{\Phi_\Gamma}^2 := \left( \norm{\phi_1}^2_{H^1\left(\Omega_1\right)}  + \norm{\phi_2}^2_{H^1\left(\Omega_2\right)} \right) + \norm{\phi_\Gamma}^2_{H^1\left(\Gamma\right)} \label{eq:phinorm}
\end{align}
for $\phi = \left( \phi_\mathrm{b} , \phi_\Gamma\right) \in \Phi$ with $\phi_\mathrm{b} = \left( \phi_1 , \phi_2\right) \in \Phi_\mathrm{b}$.
Then, a weak formulation of the reduced interface model derived in \Cref{sec:sec3} is given by the following problem. 
Find $p = \left( p_\mathrm{b} , p_\Gamma \right)\in  \Phi $ such that
\begin{align}
\mathcal{A}\left( p , \phi \right) &= \mathcal{R} \left( \phi\right) \qquad \text{for all } \phi = \left( \phi_\mathrm{b} , \phi_\Gamma \right) \in  \Phi .  \label{eq:weak}
\end{align}
Here, for $p = \left( p_\mathrm{b}, p_\Gamma \right) , \phi = \left( \phi_\mathrm{b}, \phi_\Gamma\right) \in \Phi$ with $p_\mathrm{b} = \left( p_1 , p_2 \right), \phi_\mathrm{b} = \left( \phi_1 , \phi_2\right) \in \Phi_\mathrm{b}$, the bilinear form~$\mathcal{A} \colon \Phi \times \Phi \rightarrow \mathbb{R}$ and the linear form~$\mathcal{R} \colon \Phi \rightarrow \mathbb{R}$ are defined by%
\begin{subequations}
\begin{align}
\mathcal{A} \left( p, \phi \right) &:= \mathcal{A}_\mathrm{b} \left( p_\mathrm{b}, \phi_\mathrm{b} \right) + \mathcal{A}_\Gamma \left( p , \phi_\Gamma \right) + \mathcal{I} \left( p , \phi \right) , \label{eq:Btot}\\ 
\mathcal{R} \left( \phi  \right) &:= \mathcal{R}_\mathrm{b} \left( \phi_\mathrm{b} \right) + \mathcal{R}_\Gamma \left( \phi_\Gamma \right) . \label{eq:Ltot}
\end{align}
\end{subequations}
Specifically, the bilinear forms~$\mathcal{A}_\mathrm{b} \colon \Phi_\mathrm{b} \times \Phi_\mathrm{b} \rightarrow \mathbb{R}$, $\mathcal{A}_\Gamma \colon \Phi \times  \Phi_\Gamma \rightarrow \mathbb{R}$, and $\mathcal{I}\colon \Phi\times \Phi \rightarrow \mathbb{R}$, which in this order represent the flow in the bulk domain, the effective flow inside the fracture, and the interfacial coupling between them, as well as the corresponding linear forms~$\mathcal{R}_\mathrm{b} \colon \Phi_\mathrm{b}  \rightarrow \mathbb{R}$ and $\mathcal{R}_\Gamma\colon \Phi_\Gamma \rightarrow \mathbb{R}$, are given by 
\begin{subequations}
\begin{align}
\mathcal{A}_\mathrm{b} \left( p_\mathrm{b} , \phi_\mathrm{b} \right) &:= \sum_{i=1,2} \int_{\Omega_i} \matr{K}_i \nabla p_i \cdot \nabla \phi_i \,\mathrm{d} V , \label{eq:Bbulk}\\
\mathcal{A}_\Gamma \left( p , \phi_\Gamma \right) &:= \!\int_\Gamma  \matr{K}_\Gamma \nabla \phi_\Gamma \cdot \!\left[ \nabla \left( dp_\Gamma \right) - \restr{p_1}{\Gamma_1} \!\nabla d_1 - \restr{p_2}{\Gamma_2} \!\nabla d_2 \right] \,\mathrm{d}\sigma , \label{eq:BGamma}\\
\begin{split}
\mathcal{I} \left( p , \phi\right) &:= \int_\Gamma \frac{K_\Gamma^\perp}{d} \jump{p_\mathrm{b}} \jump{\phi_\mathrm{b}} \,\mathrm{d}\sigma  + \int_\Gamma \beta_\Gamma  \big( p_\Gamma - \avg{p_\mathrm{b}} \big)   \big( \phi_\Gamma - \avg{\phi_\mathrm{b}} \big) \,\mathrm{d} \sigma , \label{eq:I}
\end{split} \\
\mathcal{R}_\mathrm{b} \left( \phi_\mathrm{b}\right) &:= \sum_{i=1,2} \int_{\Omega_i} q_i \phi_i \,\mathrm{d} V , \\
\mathcal{R}_\Gamma \left( \phi_\Gamma\right) &:= \int_\Gamma  q_\Gamma \phi_\Gamma \,\mathrm{d}\sigma .
\end{align}
\end{subequations}
The wellposedness of the weak problem~\eqref{eq:weak} is guaranteed by the following result.
\begin{theorem}  \label{thm:reducedwellposed}
Given the condition
\begin{align}
&\left[ \frac{\kappa_\mathrm{max}^\Gamma}{\kappa_\mathrm{min}^\Gamma} \right]^{ 2} \frac{D}{d_\mathrm{min}} \bigg[ ( 2\xi  - 1 ) \norm{\nabla d}_{L^\infty\left( \Gamma ; \, \mathbb{R}^n\right)}^2 + \norm{\nabla d_1  - \nabla d_2}_{L^\infty\left( \Gamma ;\, \mathbb{R}^n\right)}^2\bigg]  < 16 , 
\label{eq:wellposedcond}%
\end{align}%
the reduced Darcy problem~\eqref{eq:weak} has a unique solution $\left( p_\mathrm{b} , p_\Gamma \right) \in \Phi$. 
\end{theorem}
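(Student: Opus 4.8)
The plan is to establish existence and uniqueness via the Lax--Milgram theorem applied to the bilinear form $\mathcal{A}$ on the Hilbert space $(\Phi , \norm{\cdot}_\Phi )$; since $\mathcal{A}$ is not symmetric, only boundedness and coercivity are required. Boundedness of $\mathcal{R}$ and $\mathcal{A}$ I expect to be routine: every volume term is controlled by the Cauchy--Schwarz inequality together with the $L^\infty$-ellipticity bounds \eqref{eq:lambda_ineq}, while the interfacial contributions in $\mathcal{A}_\Gamma$ and $\mathcal{I}$ are handled by the trace theorem on $\Omega_1 , \Omega_2$ (to bound $\restr{p_i}{\Gamma_i}$, and hence $\avg{p_\mathrm{b}}$ and $\jump{p_\mathrm{b}}$, by $\norm{p_i}_{H^1(\Omega_i)}$) and by the $W^{1,\infty}$-bounds on $d_1 , d_2$ (to bound $\nabla (dp_\Gamma ) = d\nabla p_\Gamma + p_\Gamma \nabla d$ and the aperture-gradient factors).

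The heart of the argument is coercivity, so I would test with $\phi = p$. The form then splits into manifestly nonnegative contributions --- the bulk term $\mathcal{A}_\mathrm{b}(p_\mathrm{b},p_\mathrm{b}) \ge \kappa_\mathrm{min}^\mathrm{b}\sum_i \norm{\nabla p_i}_{L^2(\Omega_i)}^2$, the tangential fracture term $\int_\Gamma d\,\matr{K}_\Gamma \nabla p_\Gamma \cdot \nabla p_\Gamma$ arising from the $d\nabla p_\Gamma$ part of $\nabla (dp_\Gamma )$, and the two interface terms $\int_\Gamma \frac{K_\Gamma^\perp}{d}\jump{p_\mathrm{b}}^2$ and $\int_\Gamma \beta_\Gamma (p_\Gamma - \avg{p_\mathrm{b}})^2$ in $\mathcal{I}(p,p)$ --- plus the indefinite cross terms in $\mathcal{A}_\Gamma$ stemming from $p_\Gamma \nabla d - \restr{p_1}{\Gamma_1}\nabla d_1 - \restr{p_2}{\Gamma_2}\nabla d_2$. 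The decisive algebraic step is to insert $\restr{p_1}{\Gamma_1} = \avg{p_\mathrm{b}} - \tfrac12 \jump{p_\mathrm{b}}$ and $\restr{p_2}{\Gamma_2} = \avg{p_\mathrm{b}} + \tfrac12 \jump{p_\mathrm{b}}$ (from \Cref{def:jumpavg}) together with $\nabla d = \nabla d_1 + \nabla d_2$, which collapses this bracket to $(p_\Gamma - \avg{p_\mathrm{b}})\nabla d - \tfrac12 \jump{p_\mathrm{b}}(\nabla d_2 - \nabla d_1)$. Thus the cross terms are governed precisely by the two quantities $p_\Gamma - \avg{p_\mathrm{b}}$ and $\jump{p_\mathrm{b}}$ that the interface form $\mathcal{I}$ controls.

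It then remains to absorb the cross term $\int_\Gamma \matr{K}_\Gamma \nabla p_\Gamma \cdot [(p_\Gamma - \avg{p_\mathrm{b}})\nabla d - \tfrac12 \jump{p_\mathrm{b}}(\nabla d_2 - \nabla d_1)]$. I would estimate its two pieces separately by the Cauchy--Schwarz and Young inequalities, playing the $\nabla p_\Gamma$-factor off against a fraction $\mu_1$ (resp.\ $\mu_2$) of the \emph{shared} tangential budget $\int_\Gamma d\,\matr{K}_\Gamma \nabla p_\Gamma \cdot \nabla p_\Gamma$, and the $(p_\Gamma - \avg{p_\mathrm{b}})$- and $\jump{p_\mathrm{b}}$-factors off against the interface terms carrying $\beta_\Gamma = \frac{4K_\Gamma^\perp}{(2\xi -1)d}$ and $\frac{K_\Gamma^\perp}{d}$, respectively. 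Using $d_\mathrm{min}\le d\le D$ and the ellipticity bounds \eqref{eq:lambda_ineq}, the requirement that a valid Young parameter exist for the first piece reduces exactly to $[\kappa_\mathrm{max}^\Gamma / \kappa_\mathrm{min}^\Gamma ]^2 (D/d_\mathrm{min})(2\xi -1)\norm{\nabla d}_\infty^2 \le 16\mu_1$ and for the second piece to $[\kappa_\mathrm{max}^\Gamma / \kappa_\mathrm{min}^\Gamma ]^2 (D/d_\mathrm{min})\norm{\nabla d_1 - \nabla d_2}_\infty^2 \le 16\mu_2$; since the two interface budgets are independent while the tangential budget is shared (so only $\mu_1 + \mu_2 < 1$ is needed), summing these two inequalities yields precisely the hypothesis \eqref{eq:wellposedcond}. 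Under \eqref{eq:wellposedcond} I can therefore choose $\mu_1 + \mu_2 < 1$ so that a strictly positive fraction of each nonnegative contribution survives; Poincar\'e's inequality on $\Omega_1 , \Omega_2$ (functions vanish on $\rho_i$) and on $\Gamma$ (as $p_\Gamma \in H^1_0(\Gamma )$) then upgrades the surviving gradient terms to full control of $\norm{p_\mathrm{b}}_{\Phi_\mathrm{b}}^2$ and $\norm{p_\Gamma}_{\Phi_\Gamma}^2$, giving $\mathcal{A}(p,p)\ge C\norm{p}_\Phi^2$. I expect the bookkeeping of these Young constants --- keeping the shared tangential budget distinct from the two independent interface budgets, so that the two bracket terms in \eqref{eq:wellposedcond} add rather than compete --- to be the main obstacle; the remaining estimates are standard.
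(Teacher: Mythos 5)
Your proposal is correct and follows essentially the same route as the paper's proof: Lax--Milgram, the same algebraic identity collapsing the aperture-gradient bracket to $(p_\Gamma - \avg{p_\mathrm{b}})\nabla d + \tfrac12\jump{p_\mathrm{b}}(\nabla d_1 - \nabla d_2)$ (the paper's \cref{eq:gammacalc}), and the same Young-inequality balancing of the two cross terms against the interface budgets and the tangential fracture budget, which reproduces condition \cref{eq:wellposedcond} exactly. Your $\mu_1+\mu_2<1$ bookkeeping is just a reparameterization of the paper's choice of $\delta,\epsilon$ making $T_2=T_3=0$ with $T_1>0$.
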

We remark that the condition in \cref{eq:wellposedcond} appears reasonable as it prohibits large permeability fluctuations within the fracture and rules out fractures that are geometrically extreme in terms of steep aperture gradients and large aperture fluctuations.

\begin{proof}
The proof is based on the Lax-Milgram theorem. 
It is easy to see that the bilinear form~$\mathcal{A}$ from \cref{eq:Btot} is continuous with respect to the norm in~\cref{eq:phinorm}.
In the following, we will show that $\mathcal{A}$ is coercive under the condition in \cref{eq:wellposedcond}.

Using~\eqref{eq:lambda_ineq1} and Poincaré's inequality, it is evident that the bilinear form~$\mathcal{A}_\mathrm{b}$ is coercive on~$\Phi_\mathrm{b}$.
Further, concerning the bilinear form~$\mathcal{A}_\Gamma$, a simple calculation yields
\begin{align}
\begin{split}
&\big[ \phi_\Gamma - \restr{\phi_1}{\Gamma_1} \big] \nabla d_1 + \big[ \phi_\Gamma - \restr{\phi_2}{\Gamma_2}\big] \nabla d_2\\
&\hspace{3.5cm} = \big( \phi_\Gamma - \avg{\phi_\mathrm{b}}\big) \nabla d + \frac{1}{2} \jump{\phi_\mathrm{b}} \big( \nabla d_1 - \nabla d_2\big) .
\end{split} \label{eq:gammacalc}
\end{align}
Thus, using the condition~\eqref{eq:lambda_ineq} and Hölder's inequality, we have
\begin{align}
\begin{split}
\mathcal{A}_\Gamma \left( \phi , \phi_\Gamma \right) &= \int_\Gamma d\matr{K}_\Gamma \nabla \phi_\Gamma \cdot \nabla \phi_\Gamma \,\mathrm{d}\sigma  + \smash{\sum_{i=1,2}} \smash{\int_\Gamma} \big[ \phi_\Gamma - \restr{\phi_i}{\Gamma_i}\big] \matr{K}_\Gamma \nabla \phi_\Gamma \cdot  \nabla d_i \,\mathrm{d}\sigma \\[6pt]
&\ge \kappa_\mathrm{min}^\Gamma d_\mathrm{min} \norm{\nabla \phi_\Gamma }_{L^2\left(\Gamma ;\,\mathbb{R}^n\right)}^2 \\
&\quad -\kappa_\mathrm{max}^\Gamma \norm{\nabla d }_{L^\infty\left( \Gamma ;\, \mathbb{R}^n\right)} \norm{\phi_\Gamma - \avg{\phi_\mathrm{b}}}_{L^2\left(\Gamma\right)} \norm{\nabla \phi_\Gamma }_{L^2\left(\Gamma ;\, \mathbb{R}^n\right)} \\
&\quad - \frac{\kappa_\mathrm{max}^\Gamma}{2} \norm{\nabla d_1 - \nabla d_2}_{L^\infty \left( \Gamma ; \, \mathbb{R}^n \right)} \norm{\jump{\phi_\mathrm{b}}}_{L^2\left( \Gamma \right)} \norm{\nabla \phi_\Gamma }_{L^2\left( \Gamma ;\, \mathbb{R}^n \right)} .
\end{split}
\end{align}
By Young's inequality, for any $\delta , \epsilon > 0$, it holds
\begin{align}
\norm{\phi_\Gamma - \avg{\phi_\mathrm{b}}}_{L^2\left(\Gamma\right)} \norm{\nabla \phi_\Gamma }_{L^2\left(\Gamma ;\, \mathbb{R}^n\right)}  &\le \epsilon  \norm{\phi_\Gamma - \avg{\phi_\mathrm{b}}}^2_{L^2\left(\Gamma\right)} + \frac{1}{4\epsilon} \norm{\nabla \phi_\Gamma }_{L^2\left(\Gamma ; \,\mathbb{R}^n\right)}^2 , \notag \\
\norm{\jump{\phi_\mathrm{b}}}_{L^2\left( \Gamma \right)} \norm{\nabla \phi_\Gamma }_{L^2\left( \Gamma ;\, \mathbb{R}^n \right)}  &\le \delta \norm{\jump{\phi_\mathrm{b}}}_{L^2\left( \Gamma \right)}^2 + \frac{1}{4\delta} \norm{\nabla \phi_\Gamma }_{L^2\left( \Gamma ;\, \mathbb{R}^n \right)}^2 .
\end{align}
Besides, with \cref{eq:lambda_ineq}, we have
\begin{align}
\mathcal{I} \left( \phi , \phi \right) \overset{\eqref{eq:lambda_ineq}}{\ge} \frac{\kappa_\mathrm{min}^\Gamma}{D} \norm{\jump{\phi_\mathrm{b}}}_{L^2\left(\Gamma\right)}^2 + \frac{4}{2\xi - 1} \frac{\kappa_\mathrm{min}^\Gamma}{D} \norm{\phi_\Gamma - \avg{\phi_\mathrm{b}}}_{L^2\left(\Gamma\right)}^2 
\end{align}
for the bilinear form~$\mathcal{I}$. 
As a consequence, we obtain
\begin{align}
\begin{split}
&\mathcal{B}_\Gamma \left( \phi , \phi_\Gamma \right) + \mathcal{I} \left( \phi , \phi \right) \\
&\hspace{2.25cm} \ge \norm{\nabla \phi_\Gamma }^2_{L^2\left( \Gamma ;\, \mathbb{R}^n\right)} T_1 + \norm{\jump{\phi_\mathrm{b}}}_{L^2\left(\Gamma\right)}^2 T_2 + \norm{\phi_\Gamma - \avg{\phi_\mathrm{b}}}_{L^2\left(\Gamma\right)}^2 T_3
\end{split}%
\label{eq:B+Ige}%
\end{align}
with $T_1$, $T_2$, and $T_3$ defined by
\begin{subequations}
\begin{align}
T_1 &:=  \kappa_\mathrm{min}^\Gamma d_\mathrm{min} - \frac{\kappa_\mathrm{max}^\Gamma}{4\epsilon}\norm{\nabla d }_{L^\infty\left( \Gamma ;\, \mathbb{R}^n\right)}  - \frac{\kappa_\mathrm{max}^\Gamma}{8\delta} \norm{\nabla d_1 - \nabla d_2}_{L^\infty\left( \Gamma ; \, \mathbb{R}^n \right)}  , \\
T_2 &:=  \frac{\kappa_\mathrm{min}^\Gamma}{D} - \frac{\delta \kappa_\mathrm{max}^\Gamma}{2} \norm{\nabla d_1 - \nabla d_2 }_{L^\infty\left( \Gamma ;\, \mathbb{R}^n\right)} , \\
T_3 &:=  \frac{4}{2\xi - 1} \frac{\kappa_\mathrm{min}^\Gamma}{D} - \epsilon \kappa_\mathrm{max}^\Gamma \norm{\nabla d }_{L^\infty\left( \Gamma ; \, \mathbb{R}^n\right)}  .
\end{align}
\end{subequations}
Now, w.l.o.g., we assume $\norm{\nabla d }_{L^\infty\left( \Gamma ;\, \mathbb{R}^n\right)} , \norm{\nabla d_1 - \nabla d_2 }_{L^\infty\left( \Gamma ;\, \mathbb{R}^n\right)} \not= 0$.
Moreover, we choose $\delta , \epsilon > 0$ such that $T_2 = 0$ and $T_3 = 0$, i.e.,
\begin{align}
\delta &= \frac{2 \kappa_\mathrm{min}^\Gamma}{ D \kappa_\mathrm{max}^\Gamma  \norm{\nabla d_1 - \nabla d_2 }_{L^\infty\left( \Gamma ;\, \mathbb{R}^n\right)}}, \quad\enspace 
 \epsilon = \frac{4}{2\xi - 1}   \frac{\kappa_\mathrm{min}^\Gamma}{D \kappa_\mathrm{max}^\Gamma  \norm{\nabla d }_{L^\infty\left( \Gamma ;\, \mathbb{R}^n\right)}}.
\end{align}
Then, the condition in \cref{eq:wellposedcond} guarantees $T_1 > 0$. 
Thus, by using Poincaré's inequality, we obtain the coercivity of the overall bilinear form~$\mathcal{A}$.
\end{proof}

In case of a classical solution, a corresponding strong formulation of the weak system~\cref{eq:weak} is, for $i\in\{ 1,2 \}$, given by%
\begin{subequations}
\begin{alignat}{2}
-\nabla\cdot \left( \matr{K}_i\nabla p_i  \right) &= q_i \quad\enspace &&\text{in } \Omega_i ,  \\
\label{eq:strongB} -\nabla \cdot \Big[ \matr{K}_\Gamma \Big( \nabla \left( dp_\Gamma \right) - \restr{p_1}{\Gamma_1} \!\nabla d_1 - \restr{p_2}{\Gamma_2} \!\nabla d_2 \Big)  \Big] &= q_\Gamma -\jump{\matr{K}\nabla p} \quad\enspace &&\text{in } \Gamma , \\
\avg{\matr{K}\nabla p}  &= \frac{ K_\Gamma^\perp}{d} \jump{p} \quad\enspace &&\text{on } \Gamma , \label{eq:coupling_A} \\
\jump{\matr{K}\nabla p} &= \beta_\Gamma \big( p_\Gamma - \avg{p}\big) \quad &&\text{on } \Gamma , \label{eq:coupling_B} \\
p_i &= 0 \quad\enspace &&\text{on } \rho_i , \label{eq:strongE} \\
p_\Gamma &= 0 \qquad &&\text{on } \partial\Gamma  . \label{eq:strongF}
\end{alignat}%
\label{eq:strong}%
\end{subequations}
Here, we observe that the quantity 
\begin{align}
\vct{u}_\Gamma := - \matr{K}_\Gamma \Big( \nabla \left( dp_\Gamma \right) -  \restr{p_1}{\Gamma_1} \!\nabla d_1 - \restr{p_2}{\Gamma_2} \!\nabla d_2  \Big)  
\end{align}
in \cref{eq:strongB} takes the role of the effective velocity inside the reduced fracture~$\Gamma$. 
Further, for a symmetric fracture with constant aperture, i.e., $d_1 = d_2 = d/2 \equiv \mathrm{const}.$, the model in \cref{eq:strong} coincides with the model proposed in~\cite{martin05}. 
Therefore, the new model~\cref{eq:strong} can be viewed as an extension of the model in~\cite{martin05} for general asymmetric fractures with spatially varying aperture. 

\subsection{Model Variants} \label{sec:sec41}
According to the derivation of the reduced model~\eqref{eq:strong} in \Cref{sec:sec3}, there should be a gap between the bulk domains~$\Omega_1$ and $\Omega_2$ on either side of the fracture as illustrated in \Cref{fig:Omega_Gamma}.
However, for numerical calculations in practice, the bulk domains~$\Omega_1$ and $\Omega_2$ are usually rectified such that the interface~$\Gamma$ is part of their boundary, i.e., $\partial \Omega_1^\mathrm{rct.}  \cap  \partial \Omega_2^\mathrm{rct.} = \overline{\Gamma}$. 
The corresponding reduced model with simplified bulk geometry is obtained from the model in~\cref{eq:strong} by replacing the bulk domains~$\Omega_1$ and~$\Omega_2$ with the domains
\begin{subequations}
\begin{align}
\Omega_1^\mathrm{rct.} = \big\{ \vct{\gamma} + \lambda \vct{n} \in \Omega \ \big\vert\ \vct{\gamma} \in \Gamma ,\, \lambda < 0 \big\} , \\
\Omega_2^\mathrm{rct.} = \big\{ \vct{\gamma} + \lambda \vct{n} \in \Omega \ \big\vert\ \vct{\gamma} \in \Gamma ,\, \lambda > 0 \big\} .
\end{align}%
\label{eq:bulkdeformation}%
\end{subequations}%
This kind of bulk rectification requires one to neglect the terms containing aperture gradients~$\nabla d_1$,~$\nabla d_2$ in the coupling conditions~\eqref{eq:coupling_A} and~\eqref{eq:coupling_B}.
The geometrical difference between the full-dimensional model in~\cref{eq:darcydecomposed}, the reduced model in~\cref{eq:strong}, and the corresponding reduced model with bulk rectification is illustrated in \Cref{fig:bulkdeformation}. 

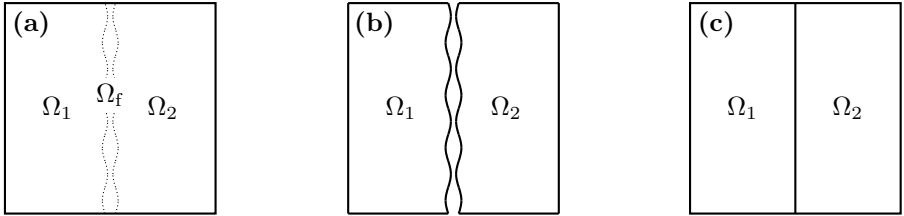
\begin{figure}[ht]
\centering
\tikzstyle{angle}=[draw=gray,angle eccentricity=.6,angle radius=0.6cm]
\begin{subfigure}{0.3\textwidth}
\centering
\begin{tikzpicture}[rotate=90, scale=0.7]
\draw[thick] (0,0) rectangle (4,4);
\draw[densely dotted] (0, 1.9)     sin (0.25, 1.85);
\draw[densely dotted] (0.25, 1.85) cos (0.5, 1.9);
\draw[densely dotted] (0.5, 1.9)   sin (0.75, 1.95);
\draw[densely dotted] (0.75, 1.95) cos (1, 1.9);
\draw[densely dotted] (1, 1.9)     sin (1.25, 1.85);
\draw[densely dotted] (1.25, 1.85) cos (1.5, 1.9);
\draw[densely dotted] (1.5, 1.9)   sin (1.75, 1.95);
\draw[densely dotted] (1.75, 1.95) cos (2, 1.9);
\draw[densely dotted] (2, 1.9)     sin (2.25, 1.85);
\draw[densely dotted] (2.25, 1.85) cos (2.5, 1.9);
\draw[densely dotted] (2.5, 1.9)   sin (2.75, 1.95);
\draw[densely dotted] (2.75, 1.95) cos (3, 1.9);
\draw[densely dotted] (3, 1.9)     sin (3.25, 1.85);
\draw[densely dotted] (3.25, 1.85) cos (3.5, 1.9);
\draw[densely dotted] (3.5, 1.9)   sin (3.75, 1.95);
\draw[densely dotted] (3.75, 1.95) cos (4, 1.9);
\draw[densely dotted] (0, 2.1)     sin (0.25, 2.15);
\draw[densely dotted] (0.25, 2.15) cos (0.5, 2.1);
\draw[densely dotted] (0.5, 2.1)   sin (0.75, 2.05);
\draw[densely dotted] (0.75, 2.05) cos (1, 2.1);
\draw[densely dotted] (1, 2.1)     sin (1.25, 2.15);
\draw[densely dotted] (1.25, 2.15) cos (1.5, 2.1);
\draw[densely dotted] (1.5, 2.1)   sin (1.75, 2.05);
\draw[densely dotted] (1.75, 2.05) cos (2, 2.1);
\draw[densely dotted] (2, 2.1)     sin (2.25, 2.15);
\draw[densely dotted] (2.25, 2.15) cos (2.5, 2.1);
\draw[densely dotted] (2.5, 2.1)   sin (2.75, 2.05);
\draw[densely dotted] (2.75, 2.05) cos (3, 2.1);
\draw[densely dotted] (3, 2.1)     sin (3.25, 2.15);
\draw[densely dotted] (3.25, 2.15) cos (3.5, 2.1);
\draw[densely dotted] (3.5, 2.1)   sin (3.75, 2.05);
\draw[densely dotted] (3.75, 2.05) cos (4, 2.1);
\node (omegaone) at (2,3) {$\Omega_1$};
\node (omegatwo) at (2,1) {$\Omega_2$};
\node (abc) at (3.6, 3.5) {\textbf{(a)}};
\node[circle,fill=white,inner sep=0pt] (omegaf) at (2.25,2) {$\Omega_\mathrm{f}$};
\end{tikzpicture}
\end{subfigure}
\hfill 
\begin{subfigure}{0.3\textwidth}
\centering
\begin{tikzpicture}[rotate=90, scale=0.7]
\draw[thick] (0, 0) -- (4, 0);
\draw[thick] (0, 0) -- (0, 1.9);
\draw[thick] (0, 2.1) -- (0, 4);
\draw[thick] (0, 4) -- (4, 4);
\draw[thick] (4, 0) -- (4, 1.9);
\draw[thick] (4, 2.1) -- (4, 4); 

\draw[thick] (0, 1.9)     sin (0.25, 1.85);
\draw[thick] (0.25, 1.85) cos (0.5, 1.9);
\draw[thick] (0.5, 1.9)   sin (0.75, 1.95);
\draw[thick] (0.75, 1.95) cos (1, 1.9);
\draw[thick] (1, 1.9)     sin (1.25, 1.85);
\draw[thick] (1.25, 1.85) cos (1.5, 1.9);
\draw[thick] (1.5, 1.9)   sin (1.75, 1.95);
\draw[thick] (1.75, 1.95) cos (2, 1.9);
\draw[thick] (2, 1.9)     sin (2.25, 1.85);
\draw[thick] (2.25, 1.85) cos (2.5, 1.9);
\draw[thick] (2.5, 1.9)   sin (2.75, 1.95);
\draw[thick] (2.75, 1.95) cos (3, 1.9);
\draw[thick] (3, 1.9)     sin (3.25, 1.85);
\draw[thick] (3.25, 1.85) cos (3.5, 1.9);
\draw[thick] (3.5, 1.9)   sin (3.75, 1.95);
\draw[thick] (3.75, 1.95) cos (4, 1.9);
\draw[thick] (0, 2.1)     sin (0.25, 2.15);
\draw[thick] (0.25, 2.15) cos (0.5, 2.1);
\draw[thick] (0.5, 2.1)   sin (0.75, 2.05);
\draw[thick] (0.75, 2.05) cos (1, 2.1);
\draw[thick] (1, 2.1)     sin (1.25, 2.15);
\draw[thick] (1.25, 2.15) cos (1.5, 2.1);
\draw[thick] (1.5, 2.1)   sin (1.75, 2.05);
\draw[thick] (1.75, 2.05) cos (2, 2.1);
\draw[thick] (2, 2.1)     sin (2.25, 2.15);
\draw[thick] (2.25, 2.15) cos (2.5, 2.1);
\draw[thick] (2.5, 2.1)   sin (2.75, 2.05);
\draw[thick] (2.75, 2.05) cos (3, 2.1);
\draw[thick] (3, 2.1)     sin (3.25, 2.15);
\draw[thick] (3.25, 2.15) cos (3.5, 2.1);
\draw[thick] (3.5, 2.1)   sin (3.75, 2.05);
\draw[thick] (3.75, 2.05) cos (4, 2.1);
\node (omegaone) at (2,3) {$\Omega_1$};
\node (omegatwo) at (2,1) {$\Omega_2$};
\node (abc) at (3.6, 3.5) {\textbf{(b)}};
\end{tikzpicture}
\end{subfigure}
\hfill
\begin{subfigure}{0.3\textwidth}
\centering
\begin{tikzpicture}[rotate=90, scale=0.7]
\draw[thick] (0,0) rectangle (4,4);
\draw[thick] (0, 2) -- (4, 2);
\node (omegaone) at (2,3) {$\Omega_1$};
\node (omegatwo) at (2,1) {$\Omega_2$};
\node (abc) at (3.6, 3.5) {\textbf{(c)}};
\end{tikzpicture}
\end{subfigure}
\caption{Bulk domains \textbf{(a)} in the full-dimensional model~\eqref{eq:darcydecomposed}, \textbf{(b)} in the reduced model~\eqref{eq:strong} without rectification, \textbf{(c)} in the reduced model~\eqref{eq:strong} with rectification.}
\label{fig:bulkdeformation}
\end{figure}

In contrast to the model in~\cite{martin05}, the new model~\cref{eq:strong} contains aperture gradients~$\nabla d_1$, $\nabla d_2$ in the effective flow equation~\eqref{eq:strongB} and in the coupling conditions~\eqref{eq:coupling_A} and~\eqref{eq:coupling_B}. 
In order to study the effect of the aperture gradients as well as the effect of a rectified bulk geometry as discussed above, we define simplified variants of the reduced model~\eqref{eq:strong}.
On the one hand, we can neglect the aperture gradients~$\nabla d_1$, $\nabla d_2$ in \cref{eq:strongB}, i.e., \cref{eq:strongB} is replaced by the equation
\begin{alignat}{2}
\label{eq:strongB_ii} -\nabla \cdot \big[ \matr{K}_\Gamma  \nabla \left( dp_\Gamma \right)  \big] &= q_\Gamma -\jump{\matr{K}\nabla p} \quad\enspace &&\text{in } \Gamma . 
\end{alignat}
On the other hand, the aperture gradients~$\nabla d_1$, $\nabla d_2$ could be neglected in the coupling conditions~\eqref{eq:coupling_A} and~\eqref{eq:coupling_B}, which corresponds to a rectification of the bulk domains~$\Omega_1$ and~$\Omega_2$.
This suggests to define the following model variants. 
\\[4pt]
\begin{tabularx}{\textwidth}{lX}
~~~\emph{Model I:} & The new model~\eqref{eq:strong} without change. \\
~~~\emph{Model I-R:} & The model~\eqref{eq:strong} with the bulk domains~$\Omega_1$ and~$\Omega_2$ replaced by the rectified domains~$\Omega_1^\mathrm{rct.}$ and~$\Omega_2^\mathrm{rct.}$ from~\cref{eq:bulkdeformation}. Terms containing $\nabla d_1$, $\nabla d_2$ are neglected in~\cref{eq:coupling_A,eq:coupling_B} but not in~\cref{eq:strongB}. \\
~~~\emph{Model II:} & The model~\eqref{eq:strong} with unchanged bulk domains. Terms containing $\nabla d_1$, $\nabla d_2$ are neglected in~\cref{eq:strongB} but not in~\cref{eq:coupling_A} and~\eqref{eq:coupling_B}. \\
~~~\emph{Model II-R:} & The model~\eqref{eq:strong} with the bulk domains~$\Omega_1$ and~$\Omega_2$ replaced by the rectified domains~$\Omega_1^\mathrm{rct.}$ and~$\Omega_2^\mathrm{rct.}$ from~\cref{eq:bulkdeformation}. Terms containing $\nabla d_1$, $\nabla d_2$ are neglected completely. \\
\end{tabularx} \\[4pt]
Model~II-R is basically the model proposed in~\cite{martin05} with the only difference that the aperture~$d$ in \cref{eq:strongB_ii} can still be a function that is not necessarily constant as assumed in~\cite{martin05}. 
In particular, in model~II-R, there is no information about the aperture gradients~$\nabla d_1$, $\nabla d_2$ and the bulk geometry at the fracture. 
In contrast, the new model~I includes all this information. 
The models~I-R and~II are intermediate models. 

\section{Discontinuous Galerkin Discretization} \label{sec:sec5}
In this section, following~\cite{antonietti19}, we formulate three discontinuous Galerkin~(DG) discretizations, one for the full-dimensional model~\eqref{eq:darcydecomposed}, one for the reduced models~II and~II-R, and one for the reduced models~I and~I-R from \Cref{sec:sec4}, where, in this order, each discretization extends the previous one. 
The choice of a DG scheme as discretization for the reduced fracture models comes naturally as it can easily deal with discontinuities across the fracture and suits the formulation of the coupling conditions~\eqref{eq:coupling_A} and \eqref{eq:coupling_B} in terms of jump and average operators. 
For simplicity, we assume that~$\Omega$ is a polytopial domain.
Besides, we consider inhomogeneous Dirichlet boundary conditions for all discretizations.

\subsection{Meshes and Notations}
Let $\mathcal{I}$ be the index family of bulk domains, i.e., $\mathcal{I} = \{1,2,\mathrm{f}\}$ in the full-dimensional case and $\mathcal{I} = \{ 1, 2\}$ for the reduced models. 
Further, for $i\in \mathcal{I}$, let $\mathcal{T}_{h,i}$ be a polytopial mesh of the bulk domain~$\Omega_i$ of closed elements~$T_{h,i} \in \mathcal{T}_{h,i}$ with disjoint interiors. 
Besides, we write $\mathcal{T}_h := \bigcup_{i\in\mathcal{I}} \mathcal{T}_{h,i}$ for the overall bulk mesh, which may be non-conforming. 
Moreover, we denote by~$\mathcal{F}_h$ the facet grid induced by~$\mathcal{T}_h$ which contains all one-codimensional intersections between grid elements~$T\in\mathcal{T}_h$ with neighboring grid elements or the domain boundary~$\partial \Omega $.  
For the reduced models, we denote by~$\mathcal{F}^\Gamma_h$ a one-co\-di\-men\-sio\-nal polytopial mesh of the interface~$\Gamma$ induced by the bulk grids~$\mathcal{T}_{h,1}$ and~$\mathcal{T}_{h,2}$.
Specifically, for the reduced models~\mbox{I-R} and~II-R, i.e., in case of a reduced model with rectified bulk domains as defined in \cref{eq:bulkdeformation}, the fracture mesh~$\mathcal{F}^\Gamma_h$ is part of the facet grid~$\mathcal{F}_h$ and given by 
\begin{subequations}
\begin{align}
\mathcal{F}^\Gamma_h := \left\{ F \in \mathcal{F}_h \ \middle\vert \ F \subset \overline{\Gamma } \right\}.
\end{align}
In the other case, for the reduced models~I and~II without bulk rectification, the fracture mesh~$\mathcal{F}_h^\Gamma$ can be defined by
\begin{align}
\mathcal{F}^\Gamma_h := \Big\{  \mathscr{P}_\Gamma ( F_1 )  \cap \mathscr{P}_\Gamma ( F_2 ) \ \Big\vert\  F_i = \partial T_i \cap \overline{\Gamma}_i \not= \emptyset , \ T_i \in \mathcal{T}_{h,i} \ \text{ for }  i \in \{ 1, 2 \} \Big\}. \label{eq:FGammah_trafo}
\end{align}%
\end{subequations}%
In \cref{eq:FGammah_trafo}, $\mathscr{P}_\Gamma$ denotes the orthogonal projection onto the hyperplane~$\Gamma$ given by
\begin{align}
\mathscr{P}_\Gamma \colon \Omega \rightarrow \Omega, \enspace \big( \eta ,  \vct{t}^T \big)^{\! T}_{\!\mathcal{N}} \mapsto \big( 0, \vct{t}^T \big)^{\! T}_{\!\mathcal{N}} .
\end{align}
Regarding the facet grid~$\mathcal{F}_h$, we distinguish between the set of facets~$\mathcal{F}_h^\partial$ on the domain boundary~$\partial \Omega$ and the set of facets~$\mathcal{F}_h^\circ$ in the interior of~$\Omega$ excluding the interface grid~$\mathcal{F}^\Gamma_h$, i.e., we can write~$\mathcal{F}_h$ as the disjoint union $\mathcal{F}_h = \mathcal{F}_h^\circ \;\dot{\cup}\; \mathcal{F}_h^\partial \;\dot{\cup}\; \left( \mathcal{F}^\Gamma_h \cap \mathcal{F}_h \right) $.
In addition, for the reduced models, we denote by~$\mathcal{E}_h^\Gamma$ the set of edges of the interface grid~$\mathcal{F}^\Gamma_h$, i.e., the set of two-codimensional intersections between elements~$F\in\mathcal{F}^\Gamma_h$ or the boundary~$\partial\Gamma$. 
More specifically, we distinguish between the set of edges~$\mathcal{E}^{\circ}_h$ in the interior of the interface~$\Gamma$ and the set of edges~$\mathcal{E}^{\partial}_h$ at the boundary~$\partial \Gamma$ such that $\mathcal{E}^\Gamma_h =  \mathcal{E}^{\circ}_h \; \dot{\cup} \; \mathcal{E}^{\partial}_h$.

For $A \subset \mathbb{R}^n$, let $\mathcal{P}_k ( A )$ denote the space of polynomials on~$A$ whose degrees do not exceed~$k\in\mathbb{N}_0$.
Then, we define the finite-dimensional function spaces
\begin{subequations}
\begin{align}
\Phi^\mathrm{b}_h &:= \big\{ \phi_h \in L^2 (\Omega ) \ \big\vert \ \restr{\phi_h}{T} \in \mathcal{P}_{k_T}(T)\ \text{for all}\ T\in\mathcal{T}_h \big\} , \\
\Phi^\Gamma_h &:= \big\{ \phi^\Gamma_h \in L^2 (\Gamma ) \ \big\vert \ \restr{\phi^\Gamma_h}{F} \in \mathcal{P}_{k_F}(F)\ \text{for all}\ F\in\mathcal{F}_h^\Gamma \big\} ,\\
\Phi_h &:= \Phi_h^\mathrm{b} \times \Phi_h^\Gamma
\end{align}
\end{subequations}
with individual polynomial degrees~$k_T\in\mathbb{N}$ and $k_F \in \mathbb{N}$ for each bulk element~$T\in\mathcal{T}_h$ and interface element~$F\in\mathcal{F}_h^\Gamma$.
Further, we introduce jump and average operators for DG discretizations. 
Although we use the same notation, we note that the following definition is different from \Cref{def:jumpavg}. 
\begin{definition}[Jump and average operators for DG schemes]
Let $\mathcal{M}_h = \mathcal{T}_h$ and $\mathcal{S}_h^\circ = \mathcal{F}_h^\circ$, or $\mathcal{M}_h = \mathcal{F}_h^\Gamma$ and $\mathcal{S}_h^\circ = \mathcal{E}_h^\circ$. Further, we define the function spaces
\begin{align}
\Sigma (\mathcal{M}_h ) := \prod_{M\in\mathcal{M}_h} \! L^2 (\partial M ) , \qquad L^2(\mathcal{S}_h^\circ ) := \prod_{S\in \mathcal{S}_h^\circ} L^2 (S ) .
\end{align}
In general, functions in~$\Sigma (\mathcal{M}_h )$ will be double-valued on facets~$S\in\mathcal{S}_h^\circ$. For a function~$\phi_h\in \Sigma (\mathcal{M}_h )$, we denote the component of~$\phi_h$ associated with the mesh element~$M\in\mathcal{M}_h$ by~$\phi_h^M$. Besides, for a mesh element~$M\in\mathcal{M}_h$, we write $\vct{n}_M$ for the outer unit normal on~$\partial M$. 
We can now define the jump and average operators
\begin{subequations}
\begin{alignat}{3}
\jump{\,\cdot\, } &\colon \Sigma (\mathcal{M}_h ) \rightarrow \left[ L^2 (\mathcal{S}_h^\circ ) \right]^n,  &&\qquad \ \; \jump{\,\cdot\, } &&\colon \big[ \Sigma (\mathcal{M}_h )\big]^n \rightarrow L^2 (\mathcal{S}_h^\circ ) , \\
 \avg{\,\cdot\, } &\colon \Sigma (\mathcal{M}_h ) \rightarrow L^2 (\mathcal{S}_h^\circ ), &&\qquad \avg{\,\cdot\, } &&\colon \big[ \Sigma (\mathcal{M}_h )\big]^n \rightarrow \left[ L^2  (\mathcal{S}_h^\circ ) \right]^n   .
\end{alignat}
\end{subequations}
Let $\phi_h \in \Sigma (\mathcal{M}_h )$ and $\vct{\zeta}_h \in \big[ \Sigma (\mathcal{M}_h ) \big]^n$. For an internal facet $S\in\mathcal{S}_h^\circ$ with adjacent mesh elements~$M_1 \not= M_2 \in \mathcal{M}_h$, we define
\begin{subequations}
\begin{alignat}{3}
\restr{\jump{\phi_h}}{S} &:=  \phi_h^{M_1}\vct{n}_{M_1}  + \phi_h^{M_2} \vct{n}_{M_2}, \qquad &&\enspace\restr{\jump{\vct{\zeta}_h}}{S} &&:= \vct{\zeta}_h^{M_1} \cdot \vct{n}_{M_1} + \vct{\zeta}_h^{M_2} \cdot \vct{n}_{M_2}, \\
\restr{\avg{\phi_h}}{S} &:= \frac{1}{2} \left( \phi_h^{M_1} + \phi_h^{M_2} \right), \qquad &&\restr{\avg{\vct{\zeta}_h}}{S} &&:= \frac{1}{2} \left( \vct{\zeta}_h^{M_1} + \vct{\zeta}_h^{M_2} \right) .
\end{alignat}%
\end{subequations}%
\end{definition}

\subsection{Discrete Model with Full-Dimensional Fracture}
Let $g \in H^{1/2} (\partial \Omega )$ denote the given pressure on the boundary~$\partial \Omega $ for the Dirichlet condition in \cref{eq:dirichletdd}. 
Then, in order to obtain a DG discretization of the full-dimensional model~\eqref{eq:darcydecomposed}, we define the bilinear form~$\mathcal{A}_h^\mathrm{b}\colon \Phi_h^\mathrm{b} \times \Phi_h^\mathrm{b} \rightarrow \mathbb{R}$ associated with bulk flow and the corresponding linear form~$\mathcal{R}_h^\mathrm{b} \colon \Phi^\mathrm{b}_h \rightarrow \mathbb{R}$ by
\begin{subequations}
\begin{align}
\begin{split}
\mathcal{A}_h^\mathrm{b} (p_h^\mathrm{b} , \phi_h^\mathrm{b} ) &= \sum_{T_\in\mathcal{T}_h}\int_T \matr{K} \nabla p_h^\mathrm{b} \cdot \nabla \phi_h^\mathrm{b} \,\mathrm{d}V 
+ \sum_{F\in\mathcal{F}_h^\circ} \int_F \mu_F \jump{p_h^\mathrm{b}} \cdot \jump{\phi_h^\mathrm{b}} \,\mathrm{d} \sigma \\
&\quad - \sum_{F\in\mathcal{F}_h^\circ} \int_F \Big[ \jump{\phi_h^\mathrm{b}} \cdot  \avg{\matr{K}\nabla p_h^\mathrm{b}} + \jump{p_h^\mathrm{b}} \cdot  \avg{\matr{K}\nabla\phi_h^\mathrm{b}} \Big] \,\mathrm{d}\sigma \\
&\quad + \sum_{F\in\mathcal{F}_h^\partial} \int_F \mu_F^\mathrm{b} p_h^\mathrm{b} \phi_h^\mathrm{b} \,\mathrm{d}\sigma 
- \!\sum_{F\in\mathcal{F}_h^\partial} \int_F \big[ p_h^\mathrm{b} \matr{K}\nabla \phi_h^\mathrm{b} + \phi_h^\mathrm{b} \matr{K}\nabla p_h^\mathrm{b} \big] \!\cdot \mathrm{d}\vct{\sigma} , 
\end{split} \\
\begin{split}
\mathcal{R}^\mathrm{b}_h (\phi_h^\mathrm{b}) &=
\sum_{T\in\mathcal{T}_h} \int_T q \phi_h^\mathrm{b} \,\mathrm{d}V 
+ \sum_{F\in\mathcal{F}_h^\partial} \bigg[ \int_F \mu_F^\mathrm{b} g \phi_h^\mathrm{b} \,\mathrm{d} \sigma -  \int_F g\matr{K}\nabla \phi_h^\mathrm{b} \cdot \mathrm{d}\vct{\sigma } \bigg] .
\end{split}
\end{align}%
\label{eq:bulkforms}
\end{subequations}
In \cref{eq:bulkforms}, $\mu_F^\mathrm{b}$ is a penalty parameter which we define facet-wise, for $F\in\mathcal{F}_h \setminus \mathcal{F}_h^\Gamma$, by
\begin{align}
\mu_F^\mathrm{b} := \begin{cases}
\mu_0^\mathrm{b} \frac{(k_T + 1) (k_T + n)}{h_T}, &\hspace{-0.1cm}\text{if } F  \in \mathcal{F}_h^\partial ,\, F \subset \partial T,\, T \in\mathcal{T}_h , \\
\mu_0^\mathrm{b} \!\max\limits_{T = T_1 , T_2 } \!\left\{ \! \frac{(k_T + 1) (k_T + n)}{h_T} \!\right\} , &\hspace{-0.1cm}\text{if } F  \in \mathcal{F}_h^\circ ,\, F \subset \partial T_1 \cap \partial T_2 ,\, T_1 \not= T_2 \in \mathcal{T}_h .
\end{cases}%
\label{eq:bulkpenalty}%
\end{align}
In \cref{eq:bulkpenalty}, $\mu_0^\mathrm{b} > 0$ is a sufficiently large constant and $h_T$ denotes the maximum edge length of a grid element~$T\in\mathcal{T}_h$.

A DG discretization of the full-dimensional system~\eqref{eq:darcydecomposed} is now given by the following problem.
Find $p_h^\mathrm{b} \in \Phi_h^\mathrm{b}$ such that
\begin{align}
\mathcal{A}_h^\mathrm{b} (p_h^\mathrm{b} , \phi_h^\mathrm{b} ) = \mathcal{R}_h^\mathrm{b} (\phi_h^\mathrm{b} ) \qquad \text{for all } \phi_h^\mathrm{b} \in \Phi_h^\mathrm{b} . \label{eq:dgbulk}
\end{align}

\subsection{Discrete Model with Interfacial Fracture}
Let $g_\Gamma \in H^{1/2} (\Gamma )$ and $g = (g_1 , g_2 ) \in H^{1/2} (\rho_1 ) \times H^{1/2} (\rho_2 )$ denote the given pressure functions on the external boundaries for the Dirichlet conditions~\eqref{eq:strongE} and~\eqref{eq:strongF}. 
We continue to extend the DG discretization in \cref{eq:dgbulk} to a discretization of the reduced interface models~I, I-R, II, and~II-R from \Cref{sec:sec4}. 
Here, the models~I and I-R and the models~II and II-R can be treated together, respectively, since they only differ in their bulk geometry with otherwise identical weak formulation. 

We define the bilinear forms~$\mathcal{A}_{h}^{\Gamma_1} \colon \Phi_h^\Gamma \times \Phi_h^\Gamma \rightarrow \mathbb{R}$, $\mathcal{A}_{h}^{\Gamma_2} \colon \Phi_h^\mathrm{b} \times \Phi_h^\Gamma$, and $\mathcal{I}_h \colon \Phi_h \times \Phi_h \rightarrow \mathbb{R}$, as well as the linear form~$\mathcal{R}_h^\Gamma \colon \Phi_h^\Gamma \rightarrow \mathbb{R}$, by 
\begin{subequations}
\begin{align}
\begin{split}
\label{eq:BGamma1} \mathcal{A}^{\Gamma_1}_h (p_h^\Gamma , \phi_h^\Gamma ) &= \sum_{F\in F_h^\Gamma} \int_F \matr{K}_\Gamma \nabla ( d p_h^\Gamma ) \cdot \nabla \phi_h^\Gamma \,\mathrm{d} \sigma 
+ \sum_{E \in \mathcal{E}^\circ_h } \int_E \mu^\Gamma_E \jump{p_h^\Gamma } \cdot  \jump{\phi_h^\Gamma } \,\mathrm{d}r \\
&\quad - \sum_{E\in\mathcal{E}^\circ_h} \int_E \Big[ \jump{\phi_h^\Gamma} \cdot  \avg{\matr{K}_\Gamma \nabla (d p_h^\Gamma )} + \jump{dp_h^\Gamma} \cdot  \avg{\matr{K}_\Gamma \nabla \phi_h^\Gamma}\Big] \, \mathrm{d} r \\
&\mkern-16mu + \sum_{E\in \mathcal{E}_h^\partial} \bigg[ \int_E \mu^\Gamma_E p_h^\Gamma \phi_h^\Gamma \,\mathrm{d}r 
- \int_E \Big[ \phi_h^\Gamma \matr{K}_\Gamma \nabla (dp_h^\Gamma ) - dp_h^\Gamma \matr{K}_\Gamma \nabla \phi_h^\Gamma \Big] \cdot \mathrm{d} \vct{r} \bigg] ,
\end{split} \\
\begin{split}
\label{eq:BGamma2} \mathcal{A}^{\Gamma_2}_h (p_h^\mathrm{b} , \phi_h^\Gamma ) &= 
-\!\sum_{F\in F_h^\Gamma} \int_F \Big[ p_h^{(1)}\nabla d_1 + p_h^{(2)}\nabla d_2\Big] \!\cdot \matr{K}_\Gamma \nabla \phi_h^\Gamma \,\mathrm{d}\sigma \\
&\mkern-48mu + \!\sum_{E\in\mathcal{E}_h^\circ} \int_E \avg{p_h^\mathrm{b}} \jump{\phi_h^\Gamma} \cdot \matr{K}_\Gamma  \nabla d \, \mathrm{d} r 
 + \!\sum_{E\in\mathcal{E}^\partial_h} \int_E \phi_h^\Gamma \big[ p_h^{(1)} \nabla d_1 + p_h^{(2)} \nabla d_2 \big] \! \cdot \mathrm{d} \vct{r} ,
\end{split} \\
\begin{split}
\label{eq:Idg} \mathcal{I}_h (p_h , \phi_h ) &= \sum_{F\in\mathcal{F}_h^\Gamma} \int_F \frac{K_\Gamma^\perp}{d} \jump{p_h^\mathrm{b}} \cdot \jump{\phi_h^\mathrm{b}} \,\mathrm{d}\sigma  \\
&\qquad + \sum_{F\in\mathcal{F}_h^\Gamma} \int_F \beta_\Gamma  \big( p^\Gamma_h - \avg{p^\mathrm{b}_h} \big)   \big( \phi^\Gamma_h - \avg{\phi^\mathrm{b}_h} \big) \,\mathrm{d} \sigma ,
\end{split} \\
\mathcal{R}^{\Gamma}_h (\phi_h) = &\!\sum_{F\in F_h^\Gamma} \!\int_F q_\Gamma \phi^\Gamma_h \,\mathrm{d}\sigma + \!\sum_{E\in \mathcal{E}_h^\partial} \!\bigg[  \!\int_E \mu_\Gamma g_\Gamma \phi_h^\Gamma \,\mathrm{d}r - \!\int_E d g_\Gamma \matr{K}_\Gamma \nabla \phi_h^\Gamma \cdot \mathrm{d}\vct{r} \bigg] ,
\end{align}
\end{subequations}
where $p_h = (p_h^\mathrm{b} , p_h^\Gamma) , \phi_h = (\phi_h^\mathrm{b} , \phi_h^\Gamma ) \in \Phi_h$ with $p_h^\mathrm{b} = \smash{\big( p_h^{(1)}, p_h^{(2)}\big)} \in \Phi_h^\mathrm{b}$.
For the reduced models~I and~II without bulk rectification, the evaluation of bulk functions in~$\smash{\Phi_h^\mathrm{b}}$ on the interface~$\Gamma$ in the~\cref{eq:BGamma2,eq:Idg} is to be understood in the sense of restrictions to the interfaces~$\Gamma_1$ and~$\Gamma_2$ as defined in~\cref{eq:interfacerestr}.
Besides, in \cref{eq:BGamma1}, $\smash{\mu^\Gamma_E}$ is a penalty parameter on the interface~$\Gamma$ that is defined in analogy to~\cref{eq:bulkpenalty}.

A DG discretization of the the reduced interface models~II and~II-R is now given by the following problem.
Find $p_h = (p_h^\mathrm{b} , p_h^\Gamma ) \in \Phi_h$ such that
\begin{align}
\mathcal{A}^\mathrm{b}_h (p_h^\mathrm{b} , \phi_h^\mathrm{b} ) + \mathcal{A}^{\Gamma_1}_h (p_h^\Gamma , \phi_h^\Gamma ) + \mathcal{I}_h (p_h , \phi_h )  = \mathcal{R}^\mathrm{b}_h (\phi_h^\mathrm{b} ) + \mathcal{R}^\Gamma_h (\phi_h^\Gamma )  
\label{eq:dgmartin}
\end{align}
holds for all $\phi_h = (\phi_h^\mathrm{b} , \phi_h^\Gamma ) \in \Phi_h$. 

Finally, a DG discretization of the reduced models~I and I-R extending the discretization in~\cref{eq:dgmartin} can be formulated as follows. 
Find $p_h = (p_h^\mathrm{b} , p_h^\Gamma ) \in \Phi_h$ so that
\begin{align}
\mathcal{A}^\mathrm{b}_h (p_h^\mathrm{b} , \phi_h^\mathrm{b} ) + \mathcal{A}^{\Gamma_1}_h (p_h^\Gamma , \phi_h^\Gamma ) + 
\mathcal{A}^{\Gamma_2}_h (p_h^\mathrm{b} , \phi_h^\Gamma ) + \mathcal{I}_h (p_h , \phi_h )  = \mathcal{R}^\mathrm{b}_h (\phi_h^\mathrm{b} ) + \mathcal{R}^\Gamma_h (\phi_h^\Gamma ) 
\label{eq:dgnew}%
\end{align}%
holds for all $\phi_h = (\phi_h^\mathrm{b} , \phi_h^\Gamma ) \in \Phi_h$.

\section{Numerical Results} \label{sec:sec6}
We present numerical results to validate the new reduced interface model~\eqref{eq:strong} and explore its capabilities.
In particular, we investigate how the use of a simplified bulk geometry and the negligence of aperture gradients~$\nabla d_1$,~$\nabla d_2$ affects the accuracy of the reduced model~\cref{eq:strong}. 
For this, numerical solutions of the reduced models I, I-R, II, and~II-R from \Cref{sec:sec41} are compared with a numerical reference solution of the full-dimensional model~\eqref{eq:darcydecomposed}. 
Specifically, for the full-dimensional reference solution, the average pressure~$p_\Gamma^\mathrm{ref}$ across the fracture is computed according to~\cref{eq:pGamma}. 
Then, the different reduced models are assessed in terms of their solution for the effective pressure~$p_\Gamma$ inside the fracture and its deviation from the averaged reference solution~$p_\Gamma^\mathrm{ref}$, particularly, by calculating the discrete $L^2$-error over the interface~$\Gamma$.

All subsequent test problems are performed on the computational domain~$\Omega = (0,1)^n  \subset \mathbb{R}^n$ with $n = 2$ or $n=3$ and feature a single fracture with sinusoidal aperture that is represented by the interface~$\Gamma = \{ \vct{x} \in \Omega \ \vert\ x_1 = \tfrac{1}{2} \}$ in the reduced model~\eqref{eq:strong} and its variants.
For the reduced models, the coupling parameter~$\xi$ is chosen as~$\xi = \frac{2}{3}$ as suggested by the derivation in \Cref{sec:sec3}.
Further, all test problems feature a vanishing source term~$q\equiv 0$ so that the flow is determined only by the choice of boundary conditions.
In addition, the bulk permeability is defined as $\matr{K}_1 = \matr{K}_2 \equiv \matr{I}$, where $\matr{I} \in \mathbb{R}^{n\times n}$ denotes the identity matrix. 
The fracture permeability~$\matr{K}_\mathrm{f}$ differs depending on the test case. 

The results in this section were obtained from an implementation of the DG schemes~\eqref{eq:dgbulk}, \eqref{eq:dgmartin}, and \eqref{eq:dgnew} in \texttt{DUNE}~\cite{bastian21}. 
The program code is openly available~(see \cite{hoerl22b} and corresponding repository\footnote{\url{https://github.com/maximilianhoerl/mmdgpy/tree/paper}}).
Specifically, the implementation relies on~\texttt{DUNE-MMesh}~\cite{burbulla22}, a grid module tailored for applications with interfaces. 
In particular, \texttt{DUNE-MMesh} is a useful tool for mixed-dimensional models, such as the DG schemes~\eqref{eq:dgmartin} and~\eqref{eq:dgnew}, as it allows to export a predefined set of facets from the bulk grid as separate interface grid and provides coupled solution strategies to simultaneously solve bulk and interface schemes. 
Further, the implementation depends on \texttt{DUNE-FEM}~\cite{dedner10}, a discretization module providing the capabilities to implement efficient solvers for a wide range of partial differential equations, which we access through its Python interface, where, using the Unified Form Language (\texttt{UFL})~\cite{alnaes14}, the description of models is close to their variational formulation. 

\subsection{Flow Perpendicular to a Fracture with Constant Total Aperture}  \label{sec:perp_asym}
\subsubsection{Two-Dimensional Test Problem} \label{sec:perp_asym_2d}
For the first test problem, we consider a fracture with a serpentine geometry in the two-dimensional domain~$\Omega = (0,1)^2$. 
Nonetheless, the fracture is chosen such that it exhibits a constant total aperture~$d$. 
Specifically, we define the aperture functions~$d_1$ and~$d_2$ by
\begin{align}
d_1 ( x_2 ) &= d_0 + \tfrac{1}{2}  d_0 \sin (8  \pi  x_2 ) , \qquad d_2 ( x_2 ) = d_0 - \tfrac{1}{2}  d_0  \sin (8  \pi  x_2 ) ,
\end{align}
where $d_0 >0$ is a free parameter. 
Then, the total aperture is constant and given by~$d(x_2) = 2d_0$. 
Further, on the whole boundary~$\partial \Omega$, we impose Dirichlet conditions and require the pressure~$p$ to be equal to $g (\vct{x} ) = 1 - x_1$.
Thus, the flow direction will be from left to right, perpendicular to the fracture. 
Besides, for the full-dimensional model~\eqref{eq:darcydecomposed}, the permeability inside the fracture is defined by $\matr{K}_\mathrm{f} = \tfrac{1}{2}\matr{I}$.
As a consequence, the effective fracture permeabilities in the reduced model~\eqref{eq:strong} and its variants are given by $\matr{K}_\Gamma = \tfrac{1}{2}\matr{I}$ and $K_\Gamma^\perp = \frac{1}{2}$.
In particular, the fracture is less permeable than the bulk domains~$\Omega_1$ and~$\Omega_2$.
The fracture geometry and the resulting full-dimensional solution are illustrated in \Cref{fig:perp_asym} for the case of~$d_0 = 10^{-1}$.
\begin{figure}[tbh]
\centering
\begin{subfigure}{0.495\textwidth}
\centering
\includegraphics[height=0.75\textwidth]{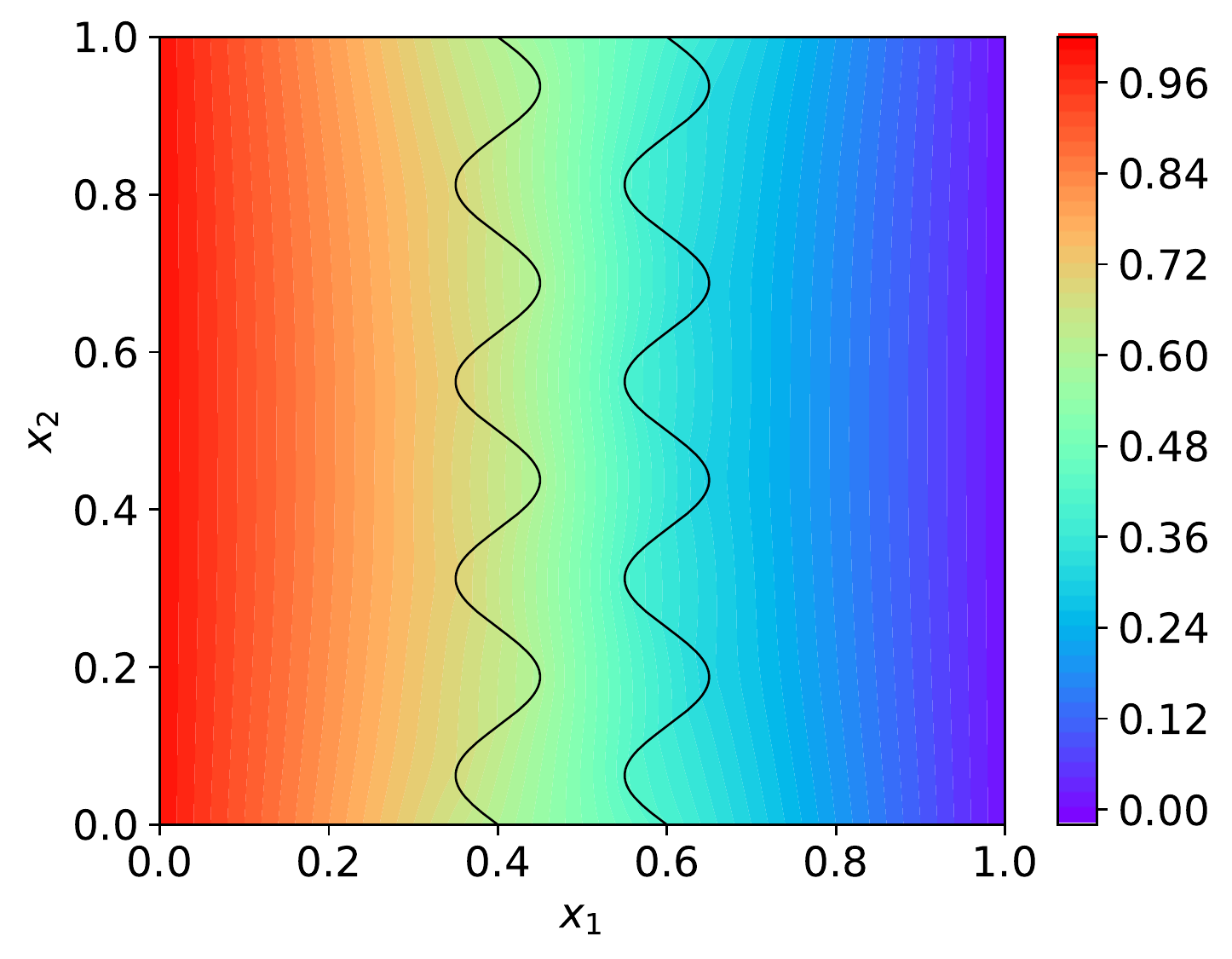}
\end{subfigure}
\hfill
\begin{subfigure}{0.495\textwidth}
\centering
\includegraphics[height=0.75\textwidth]{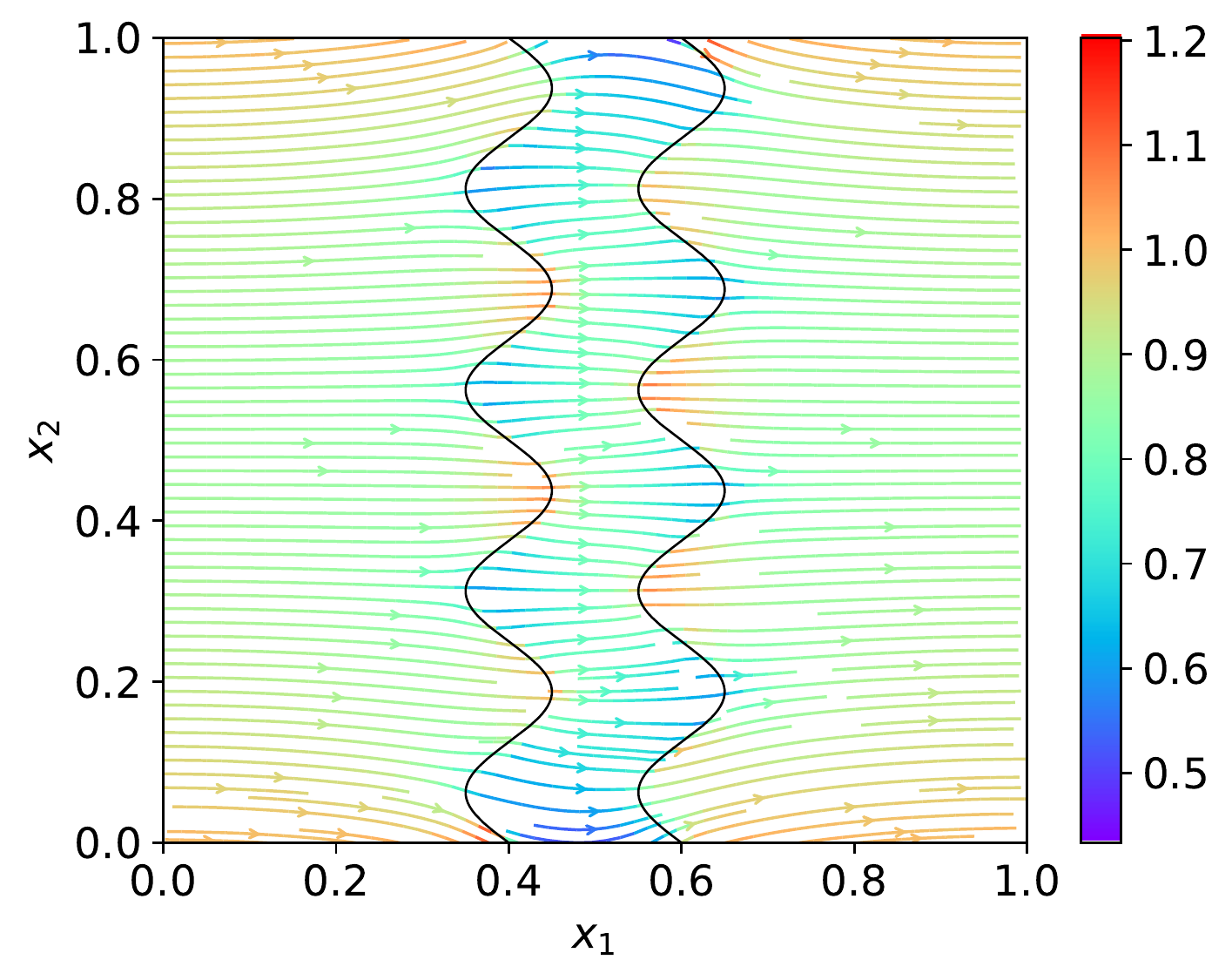}
\end{subfigure}
\caption{Full-dimensional numerical reference solution (\Cref{sec:perp_asym_2d}) for the pressure~$p$ (left) and the velocity~$-\matr{K}\nabla p$ (right) for the case of~$d_0 = 10^{-1}$.}
\label{fig:perp_asym}
\end{figure}

\Cref{fig:perp_asym_error} shows the DG solutions for the effective pressure~$p_\Gamma$ in the reduced models~I, I-R, II, and~II-R in comparison to the full-dimensional reference solution~$p_\Gamma^\mathrm{ref}$. 
On the one hand, the effective pressure~$p_\Gamma = p_\Gamma (x_2 )$ is plotted for a fixed valued of~$d_0 = 10^{-1}$, where one can see a clear difference between the solutions of the various reduced models. 
As expected, model~I performs best, while model~II-R performs worst. 
On the other hand, \Cref{fig:perp_asym_error} also displays the $L^2$-error~$\smash{\norm{p_\Gamma - p_\Gamma^\mathrm{ref}}_{L^2(\Gamma )}}$ as function of the aperture parameter~$d_0$, where, again, the solution of model~I sticks out as the most accurate. 
In \Cref{fig:perp_asym_error}, the $L^2$-errors of the reduced models~I-R and~II-R show a similar behavior. 
In particular, their convergence towards the reference solution~$p_\Gamma^\mathrm{ref}$ for a decreasing aperture is considerably slower than the convergence for the models~I and~II. 
Thus, in this test problem, it is primarily the rectification of the bulk domains~$\Omega_1$ and~$\Omega_2$ that negatively affects the model error and rate of convergence with respect to a decreasing aperture.
However, comparing the solutions of model~I and model~II, there is also an undeniable effect on the accuracy of the solution in connection with the inclusion of aperture gradients~$\nabla d_1$, $\nabla d_2$ in~\cref{eq:strongB}.
For small apertures, the error of model~I seems to stagnate.
This is attributed to numerical errors in the computation of the full-dimensional reference solution~$p_\Gamma^\mathrm{ref}$ and discussed in greater detail in \Cref{sec:perp_sym}.
\begin{figure}[tbh]
\centering
\includegraphics[width=0.8\linewidth]{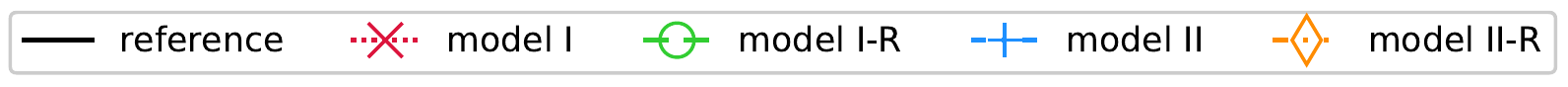}
\begin{subfigure}{.49\textwidth}
  \centering
  \includegraphics[width=\linewidth]{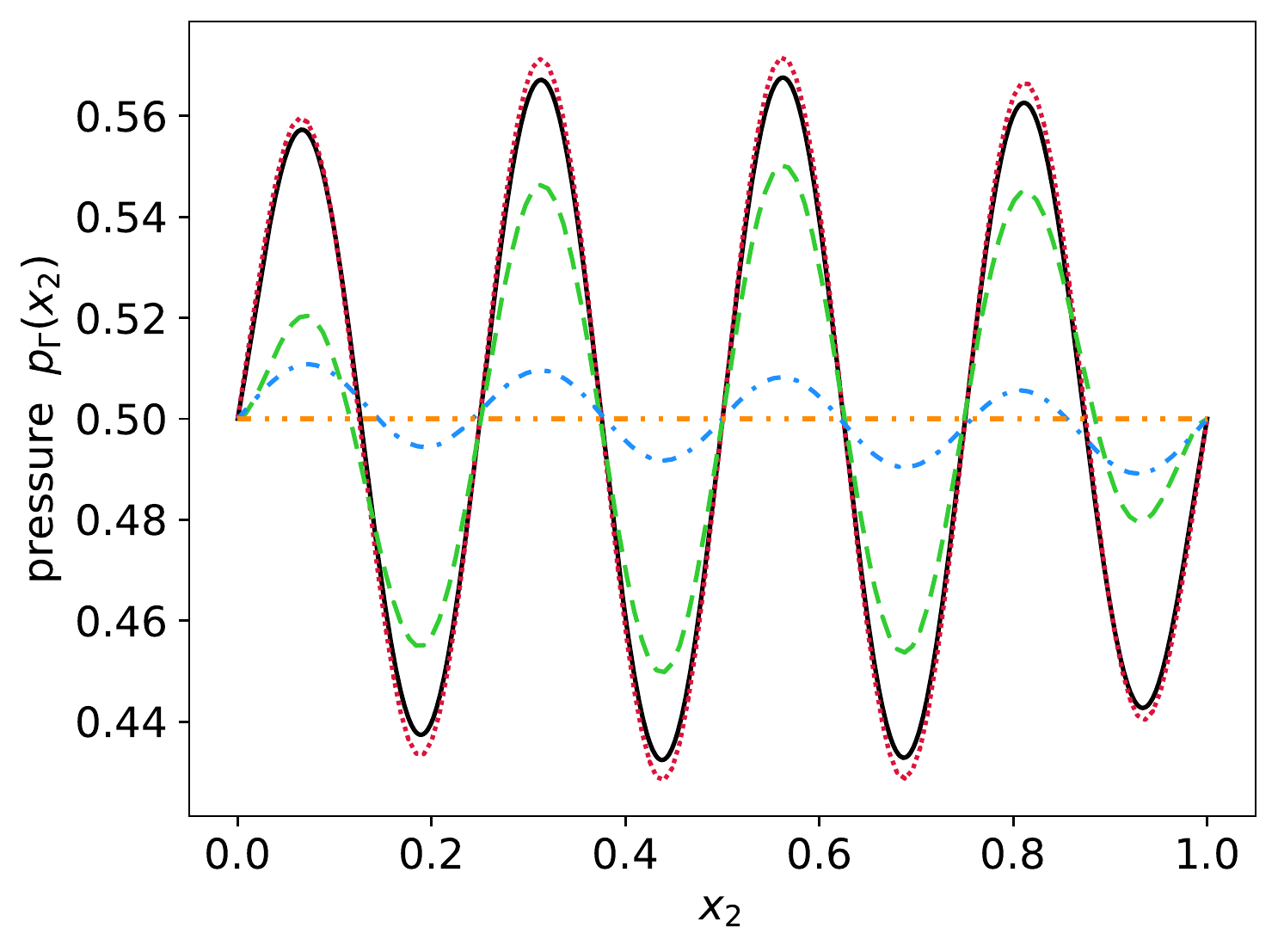}
\end{subfigure}\hfill%
\begin{subfigure}{.49\textwidth}
  \centering
  \includegraphics[width=\linewidth]{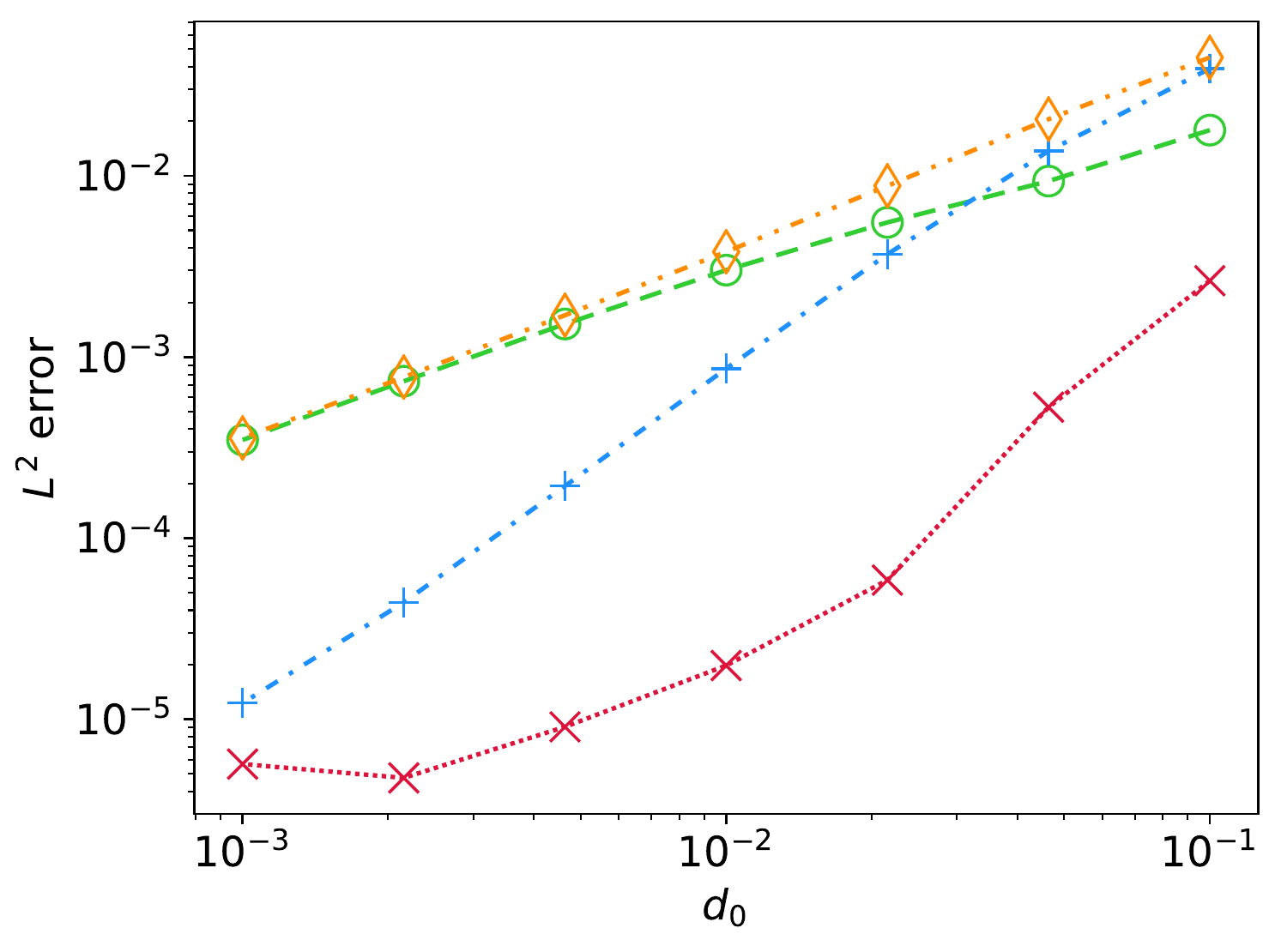}
\end{subfigure}
  \caption{Numerical solutions (\Cref{sec:perp_asym_2d}) for the effective pressure~$p_\Gamma$ for the reduced model~\eqref{eq:strong} and its variants  in comparison to the numerical reference solution~$\smash{p_\Gamma^\mathrm{ref}}$ for $d_0 = 10^{-1}$~(left) and $L^2$-error~$\Vert p_\Gamma - p_\Gamma^\mathrm{ref} \Vert_{L^2(\Gamma )}$ as function of~$d_0$~(right).}
 \label{fig:perp_asym_error}
\end{figure}

\subsubsection{Three-Dimensional Test Problem} \label{sec:perp_asym_3d}
Next, we extend the test problem from \Cref{sec:perp_asym_2d} to the three-dimensional case.
For this, we define the aperture functions~$d_1$ and~$d_2$ by
\begin{subequations}
\begin{align}
d_1 ( x_2, x_3 ) &= d_0 + \tfrac{1}{2}  d_0 \big( \sin (8  \pi  x_2 ) + \sin ( 8 \pi x_3 ) \big), \\
d_2 ( x_2, x_3 ) &= d_0 - \tfrac{1}{2}  d_0  \big( \sin (8  \pi  x_2 ) + \sin ( 8 \pi x_3 ) \big) ,
\end{align}
\end{subequations}
with a parameter~$d_0 > 0$ so that the total aperture~$d (x_2, x_3 ) = 2d_0$ is constant.
The resulting geometry is illustrated in \Cref{fig:perp_asym_3d_bulk}.
The permeability and boundary conditions are defined as in \Cref{sec:perp_asym_2d}.
\begin{figure}[tbh]
\begin{minipage}[b]{0.47\textwidth}
\centering
 \includegraphics[height=0.7\textwidth]{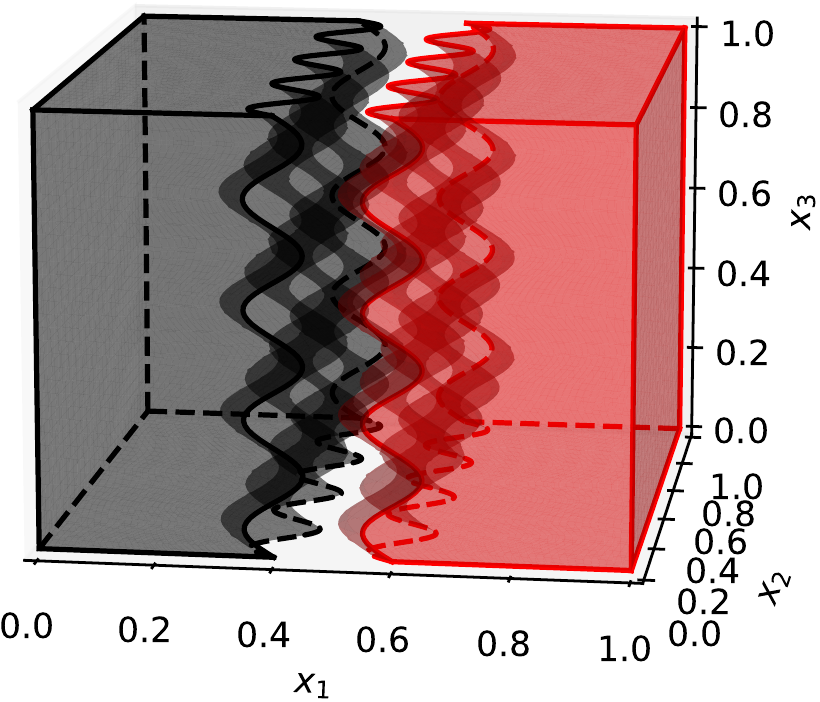}
\caption{Bulk domains $\Omega_1$ (black) and $\Omega_2$~(red) (\Cref{sec:perp_asym_3d}) for $d_0 = 10^{-1}$.}
\label{fig:perp_asym_3d_bulk}
\end{minipage}
\hfill
\begin{minipage}[b]{0.495\textwidth}
\centering
 \includegraphics[width=0.85\textwidth]{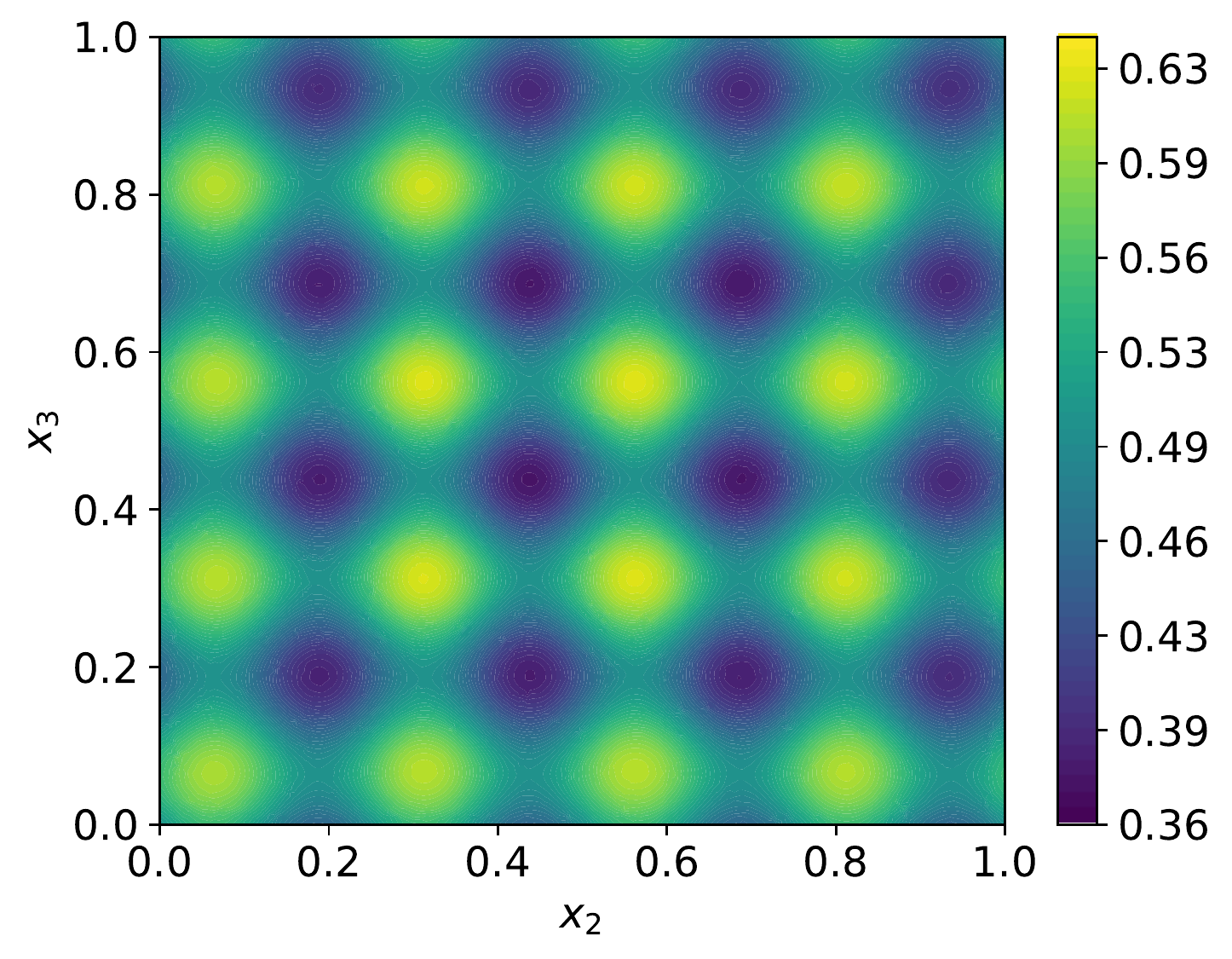}
\caption{Numerical reference solution~$p_\Gamma^\mathrm{ref}$ (\Cref{sec:perp_asym_3d}) for $d_0 = 10^{-1}$.}
\label{fig:perp_asym_3d_ref}
\end{minipage}
\end{figure}
\begin{figure}[tbh]
\begin{subfigure}{.495\textwidth}
  \centering
  \includegraphics[width=0.85\linewidth]{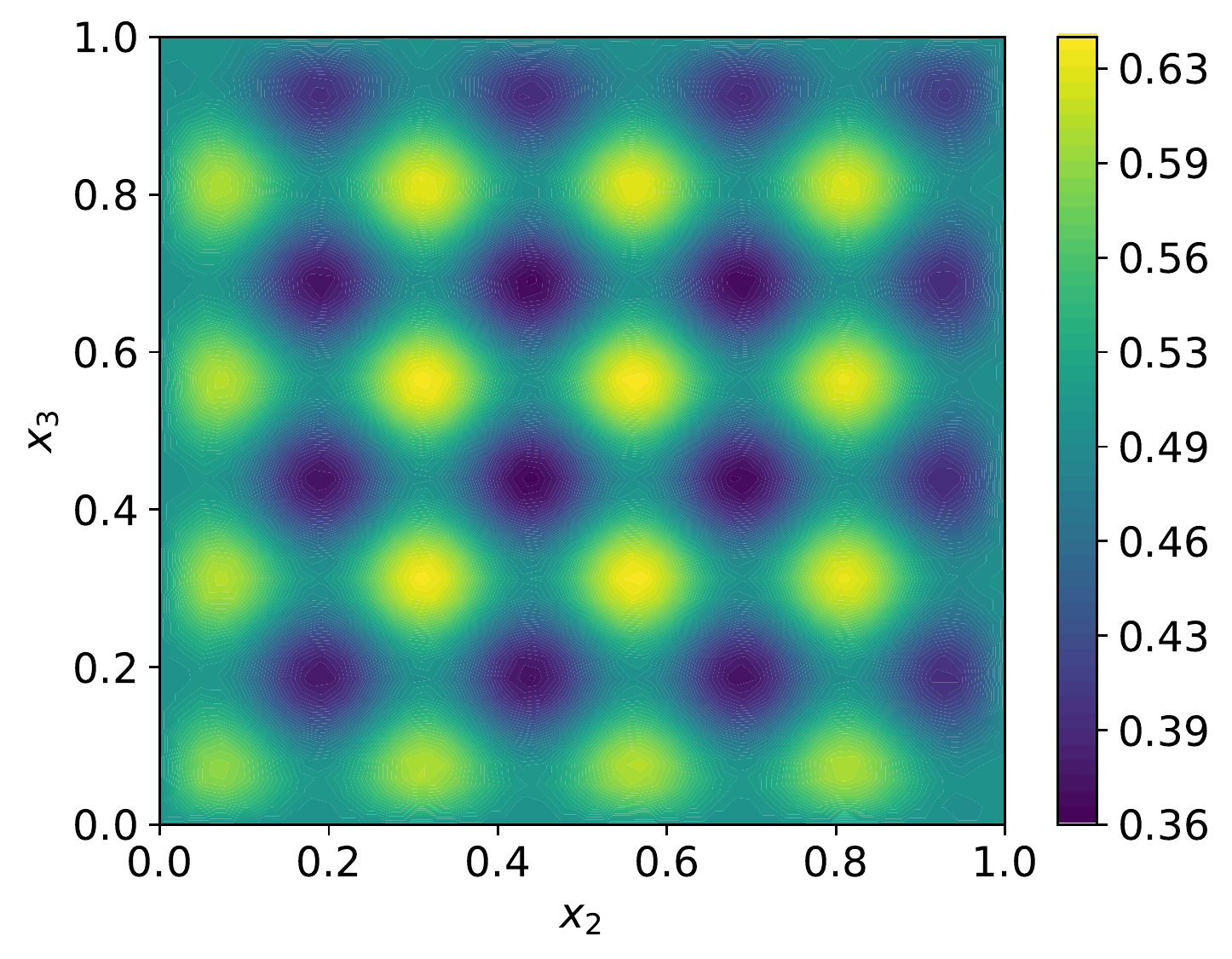}
\end{subfigure}\hfill%
\begin{subfigure}{.495\textwidth}
  \centering
  \includegraphics[width=0.85\linewidth]{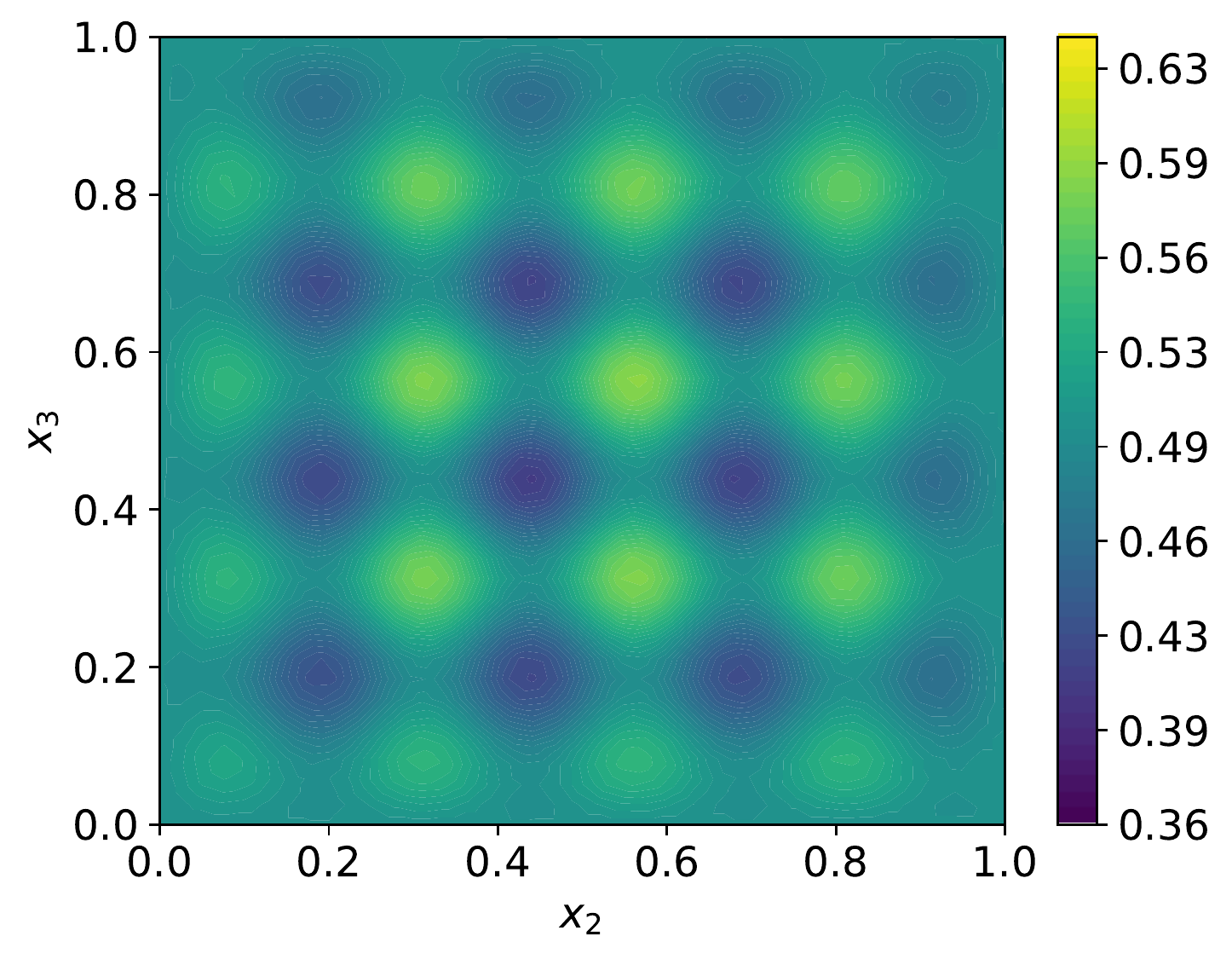}
\end{subfigure}
\begin{subfigure}{.495\textwidth}
  \centering
  \includegraphics[width=0.85\linewidth]{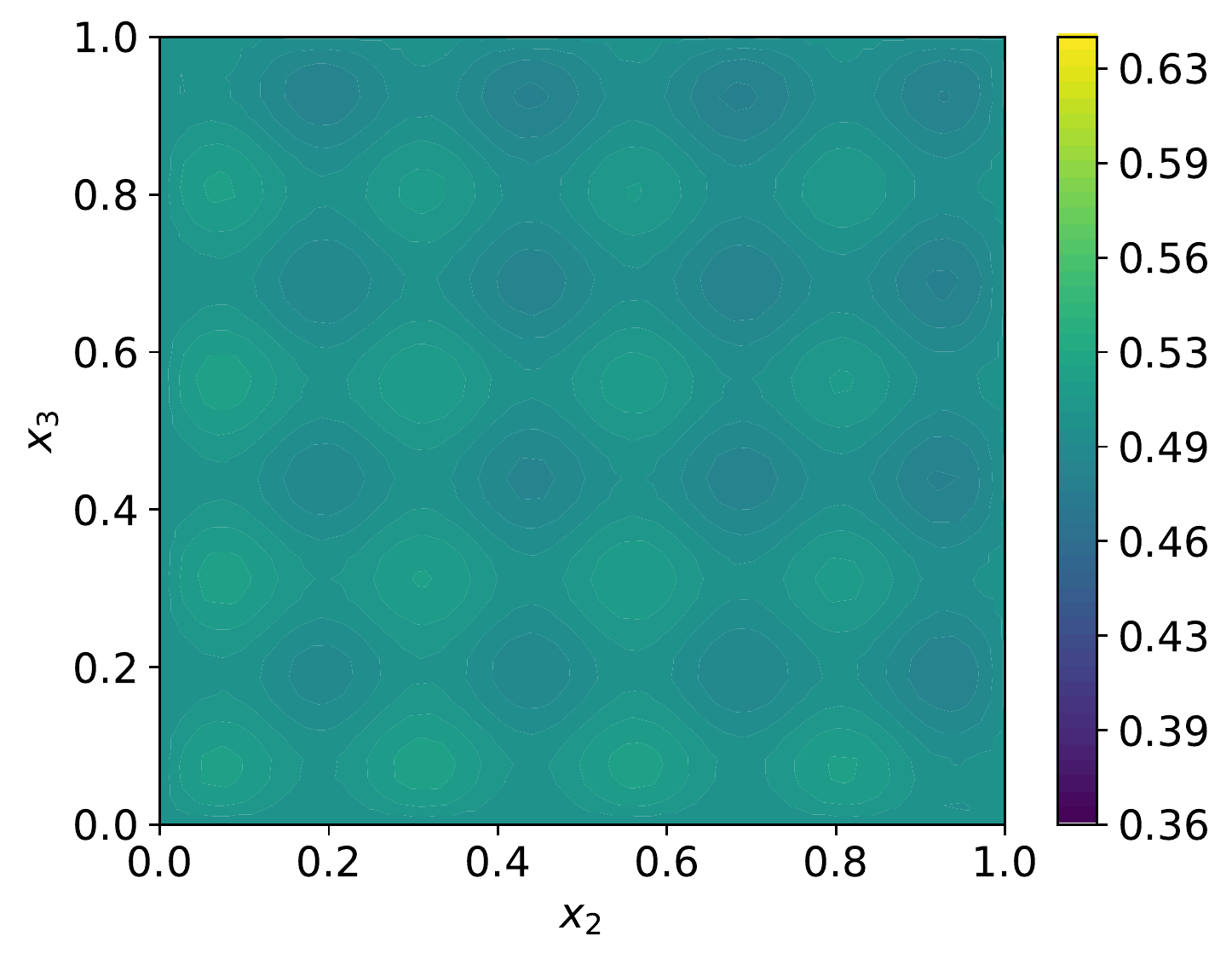}
\end{subfigure}\hfill%
\begin{subfigure}{.495\textwidth}
  \centering
  \includegraphics[width=0.85\linewidth]{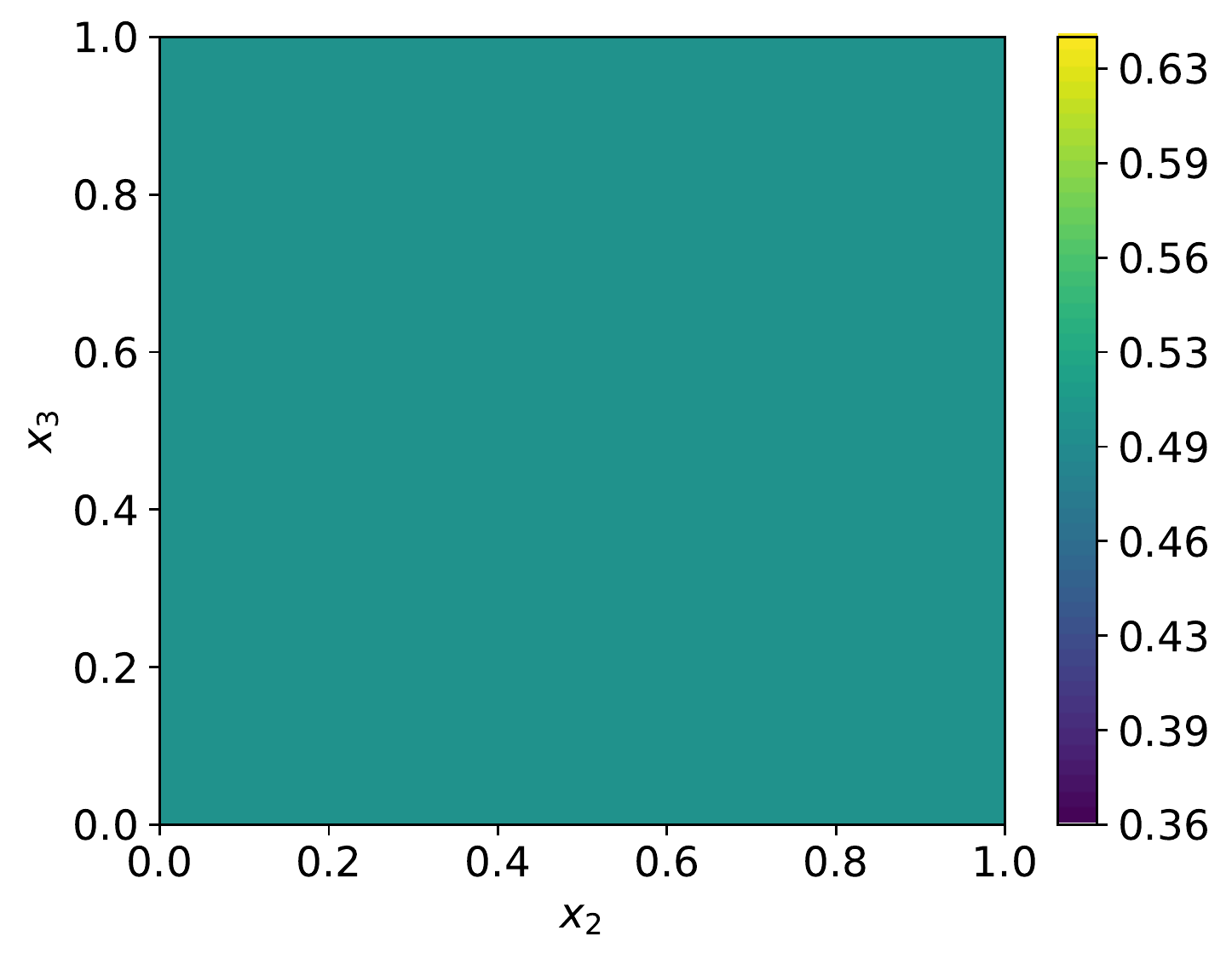}
\end{subfigure}
\caption{Numerical solutions (\Cref{sec:perp_asym_3d}) for the effective pressure~$p_\Gamma$ inside the fracture for the reduced model~\eqref{eq:strong} and its variants for $d_0 = 10^{-1}$: model~I~(top left), model~I-R~(top right), model~II~(bottom left), and model~II-R~(bottom right).}
 \label{fig:perp_asym_3d_reduced}
\end{figure}
\begin{figure}[htb]
\centering
\begin{minipage}{0.49\textwidth}
\centering
\includegraphics[width=\linewidth]{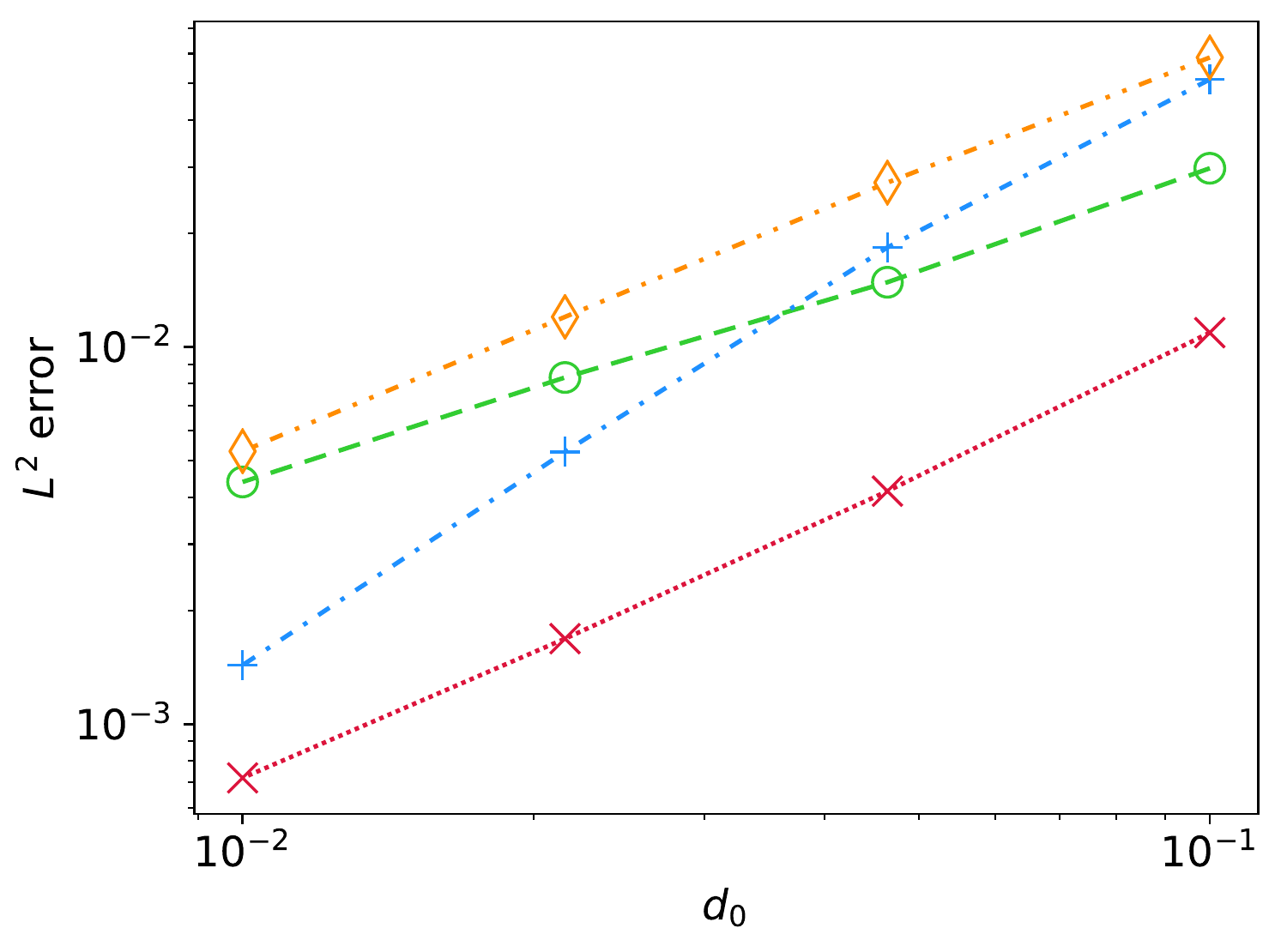}
\end{minipage} 
\quad
\begin{minipage}{0.3\textwidth}
\includegraphics[width=0.6\linewidth]{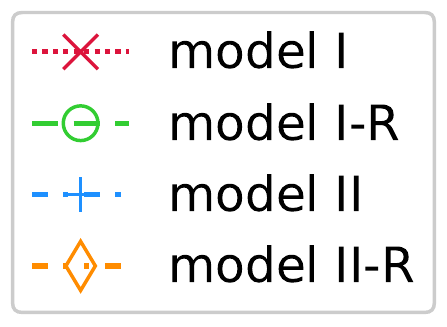}
\end{minipage}
\caption{$L^2$-error~$\smash{\Vert p_\Gamma - p_\Gamma^\mathrm{ref} \Vert_{L^2(\Gamma )}}$ as function of~$d_0$ (\Cref{sec:perp_asym_3d}).}
\label{fig:perp_asym_3d_error}
\end{figure}

\Cref{fig:perp_asym_3d_ref} displays the numerical reference solution for the averaged pressure~$p_\Gamma^\mathrm{ref}$ for~$d_0 = 10^{-1}$.
Besides, the DG solutions for the effective pressure~$p_\Gamma$ in the reduced model~\eqref{eq:strong} and its variants are shown in \Cref{fig:perp_asym_3d_reduced}. 
Here, in comparison with the reference solution in \Cref{fig:perp_asym_3d_ref}, a behavior analogous to the two-dimensional case in \Cref{fig:perp_asym_error} becomes apparent. 
The solution of model~I matches well with the reference solution, whereas the solutions of the models~I-R and~II reproduce the sine-like pattern of the reference solution at a too low amplitude and the solution of model~II-R is virtually constant.
This is also reflected by the $L^2$-error~$\smash{\norm{p_\Gamma - p_\Gamma^\mathrm{ref}}_{L^2(\Gamma )}}$, which is displayed in \Cref{fig:perp_asym_3d_error} as function of the aperture parameter~$d_0$. 
The $L^2$-error shows the same trends for a declining aperture as observed in the two-dimensional case. 

\subsection{Flow Perpendicular to an Axisymmetric Fracture} \label{sec:perp_sym}
For this next test problem, we again consider a sinusoidal fracture in the two-dimensional domain~$\Omega = (0,1)^2$,  however, this time with a non-constant total aperture~$d$. 
More particularly, the aperture functions~$d_1$ and~$d_2$ are defined by 
\begin{align}
d_1 ( x_2 ) &= d_0 + \tfrac{1}{2}  d_0 \sin (8  \pi  x_2 ) , \qquad  d_2 ( x_2 ) = d_0 + \tfrac{1}{2}  d_0  \sin (8  \pi  x_2 ) \label{eq:d12_sym}
\end{align}
with a free parameter~$d_0 > 0$.
Thus, the interface~$\Gamma$ is the center line of an axisymmetric fracture and the total aperture~$d$ ranges between~$d_0$ and~$3d_0$. 
In addition, the permeability~$\matr{K}_\mathrm{f}$ inside the fracture and the given pressure~$g$ at the external boundary~$\partial \Omega$ are defined as in \Cref{sec:perp_asym_2d}.
The fracture geometry and full-dimensional solution from the DG scheme~\eqref{eq:dgbulk} are shown in  \Cref{fig:perp_sym} for the case of~$d_0 = 10^{-1}$.
\begin{figure}[tbh]
\centering
\begin{subfigure}{0.495\textwidth}
\centering
\includegraphics[height=0.75\textwidth]{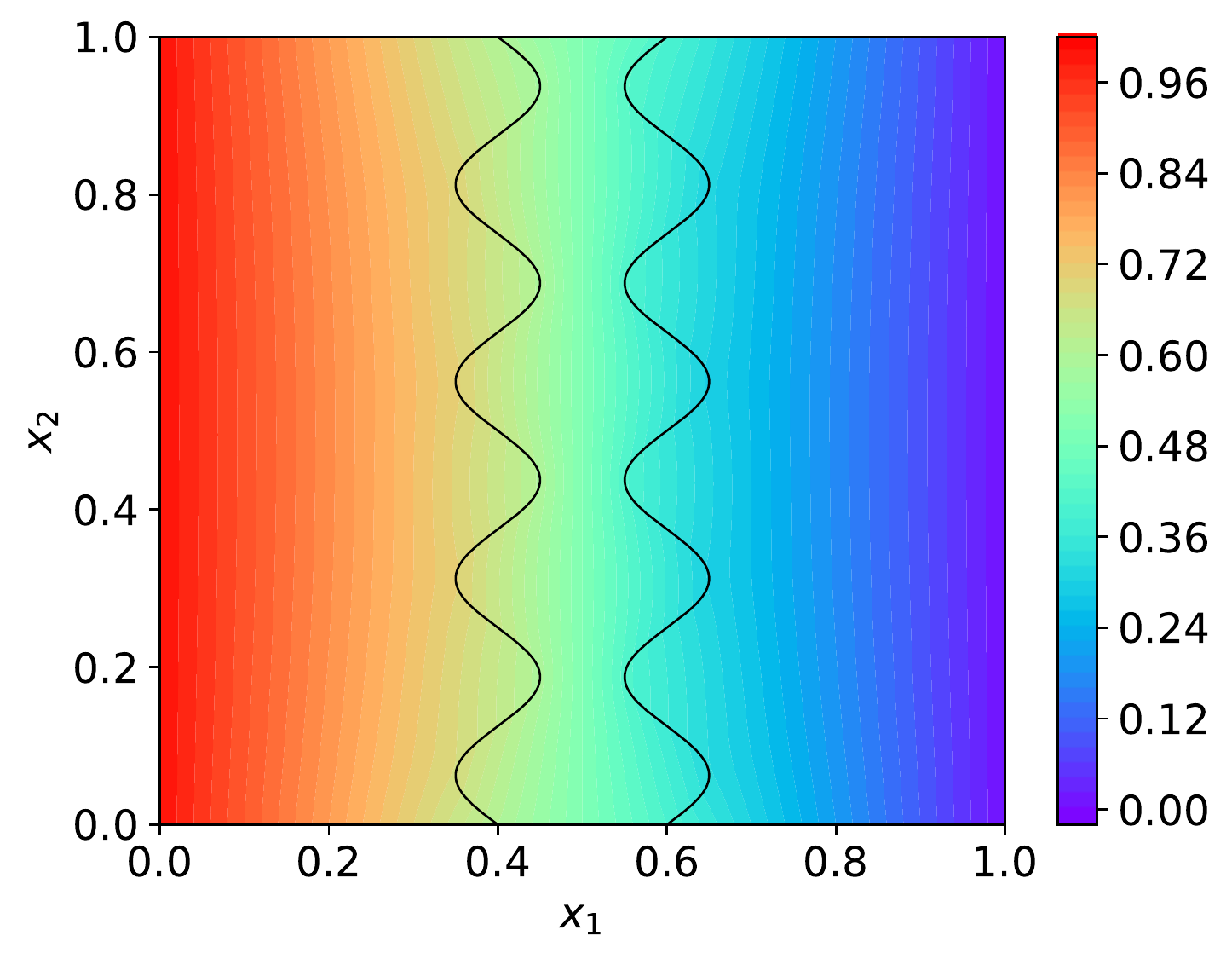}
\end{subfigure}
\hfill
\begin{subfigure}{0.495\textwidth}
\centering
\includegraphics[height=0.75\textwidth]{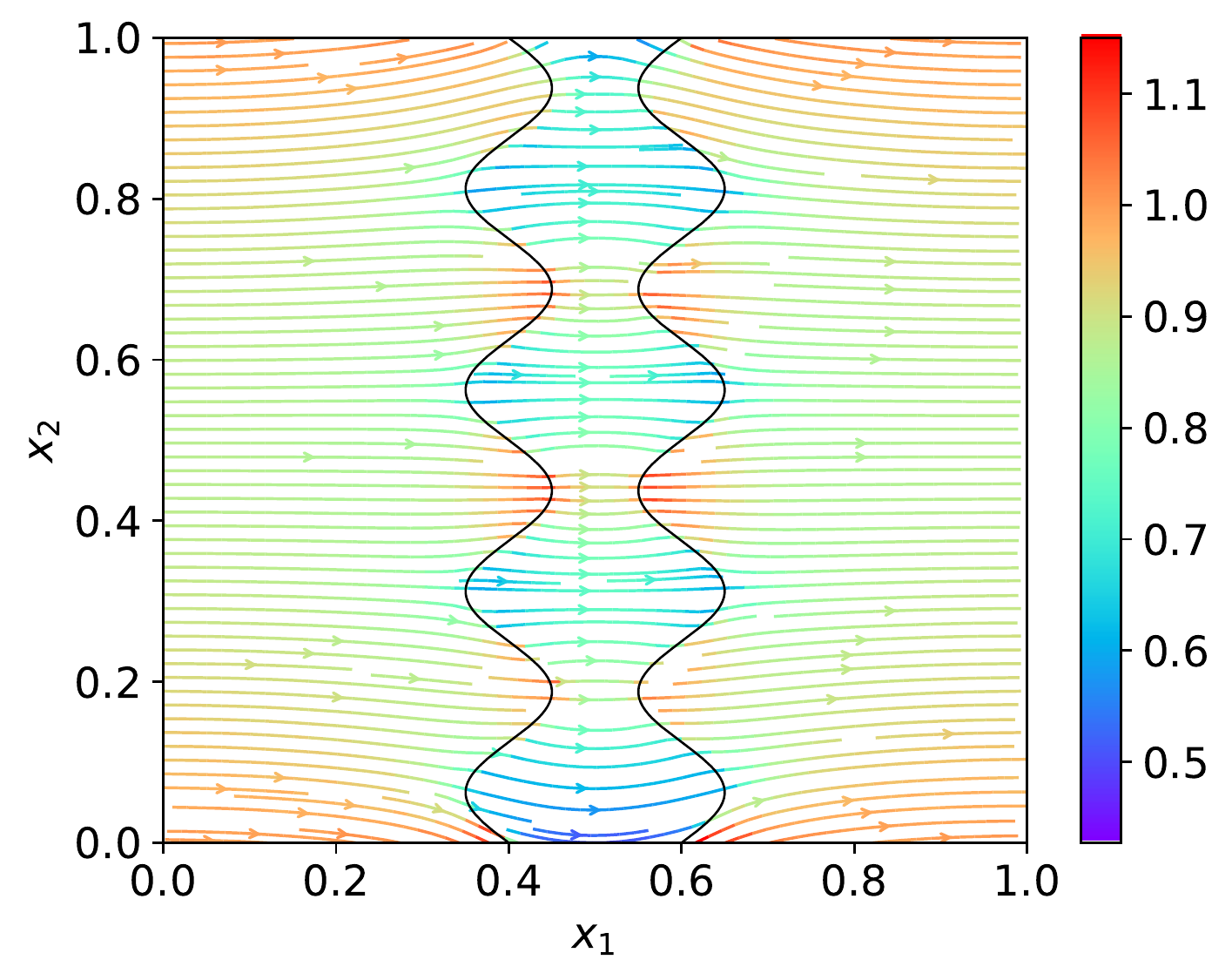}
\end{subfigure}
\caption{Full-dimensional numerical reference solution (\Cref{sec:perp_sym}) for the pressure~$p$ (left) and the velocity~$-\matr{K}\nabla p$ (right) for the case of~$d_0 = 10^{-1}$.}
\label{fig:perp_sym}
\end{figure}
\begin{figure}[tbh]
\centering
\includegraphics[width=0.8\linewidth]{media/legend.pdf}
\begin{subfigure}{.49\textwidth}
  \centering
  \includegraphics[width=\linewidth]{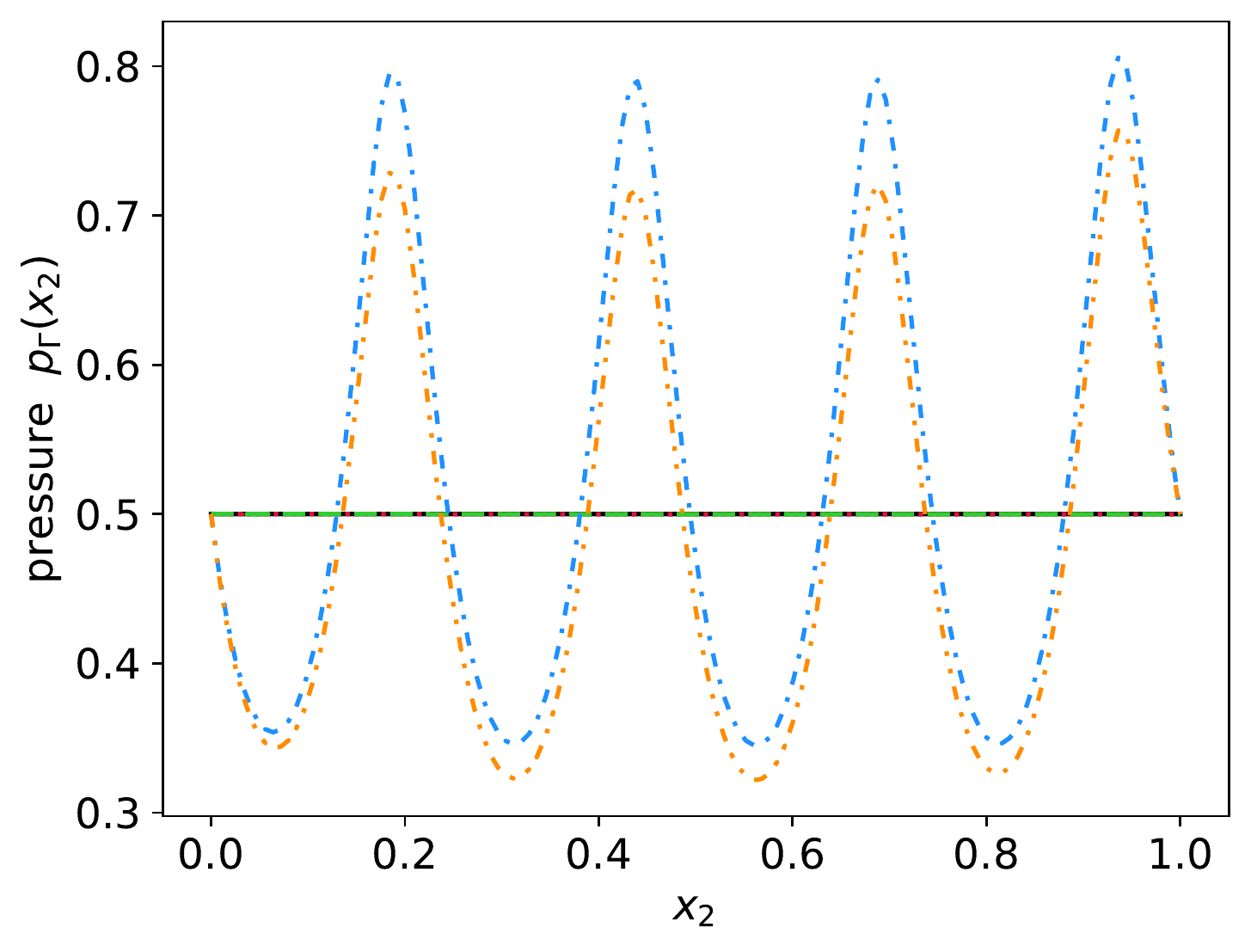}
\end{subfigure}\hfill%
\begin{subfigure}{.49\textwidth}
  \centering
  \includegraphics[width=\linewidth]{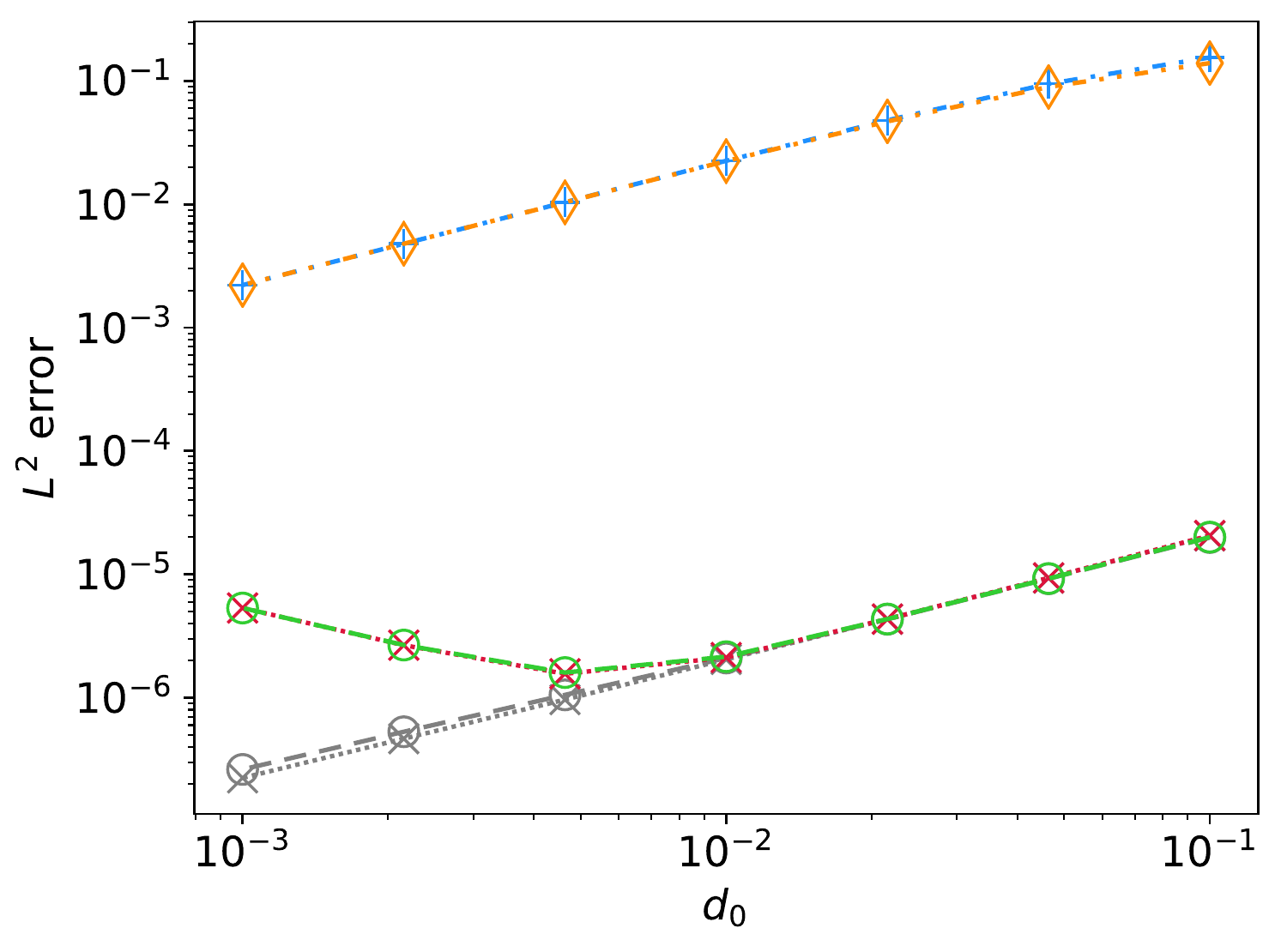}
\end{subfigure}
  \caption{Numerical solutions (\Cref{sec:perp_sym}) for the effective pressure~$p_\Gamma$ for the reduced model~\eqref{eq:strong} and its variants in comparison to the numerical reference solution~$\smash{p_\Gamma^\mathrm{ref}}$ for $d_0 = 10^{-1}$~(left) and $L^2$-error~$\Vert p_\Gamma - p_\Gamma^\mathrm{ref} \Vert_{L^2(\Gamma )}$ as function of~$d_0$~(right) with respect to the numerical reference solution~$\smash{p_\Gamma^\mathrm{ref}}$~(in color) and the presumably exact reference solution~$p_\Gamma^\mathrm{ref} \equiv \smash{\frac{1}{2}}$~(gray).}
  \label{fig:perp_sym_error}
\end{figure}

A comparison between the numerical solution~$p_\Gamma$ of the different reduced models and the averaged full-dimensional reference solution~$p_\Gamma^\mathrm{ref}$ inside the fracture can be found in~\Cref{fig:perp_sym_error}. 
Here, it occurs that the solutions of the models~I and I-R and the solutions of the models~II and II-R respectively show a very similar behavior. 
While the solutions of model~II and~II-R slowly display convergence towards the reference solution with declining aperture parameter~$d_0$, there is already a remarkable agreement between the solutions of model~I and~I-R and the reference solution.
As compared to the solutions of model~II and~II-R, the solutions of model~I and~I-R are more accurate by several orders of magnitude.
Thus, in contrast to the test problem in \Cref{sec:perp_asym_2d}, the artificial rectification of the bulk domains~$\Omega_1$ and~$\Omega_2$ is virtually without effect, while the inclusion of aperture gradients~$\nabla d_1$, $\nabla d_2$ in~\cref{eq:strongB} seems all the more significant in order to obtain accurate solutions. 
This is probably due to the symmetry of the problem and cannot be expected in general.

Further, it is noticeable in \Cref{fig:perp_sym_error} that, for the models~I and~I-R, the $L^2$-error with respect to the numerical reference solution~$p_\Gamma^\mathrm{ref}$ suffers a stop of convergence at small values of the aperture parameter~$d_0$.
Inspecting the numerical reference solution~$p_\Gamma^\mathrm{ref}$ for small apertures, one observes an unphysical oscillatory behavior, which might be associated with the integration of the full-dimensional reference solution according to \cref{eq:pGamma}.
In particular, these spurious oscillations display amplitudes in the range of~$10^{-5}$ to~$10^{-6}$ and hence can fully explain the total $L^2$-error and stop of convergence in \Cref{fig:perp_sym_error}.
Furthermore, the symmetry of the test problem in this section suggests that the effective pressure inside the fracture exactly equals~$p_\Gamma^\mathrm{ref} \equiv \frac{1}{2}$.
Thus, we can consider the $L^2$-error with respect to this presumably exact solution, which is also shown in \Cref{fig:perp_sym_error}.
Remarkably, in this case, one observes unimpeded convergence with the decline of the aperture parameter~$d_0$.
This confirms that we are dealing with a numerical error in the computation of the reference solution and not with a systematic model error.

\subsection{Tangential Flow through an Axisymmetric Fracture} \label{sec:tang_sym}
In this test problem, we consider an axisymmetric sinusoidal fracture as in~\Cref{sec:perp_sym} with the aperture functions~$d_1$ and~$d_2$ defined by~\cref{eq:d12_sym}.
Besides, we define the permeability inside the fracture by~$\matr{K}_\mathrm{f} = 2\matr{I}$ for the full-dimensional model~\eqref{eq:darcydecomposed}, which results in the effective permeabilities~$\matr{K}_\Gamma = 2 \matr{I}$ and $K_\Gamma^\perp = 2$ for the reduced model~\eqref{eq:strong} and its variants. 
In particular, the fracture permeability is larger than the bulk permeability. 
The pressure~$p$ at the boundary~$\partial \Omega$ is given by the function~$g(\vct{x}) = 4  x_1 ( 1 - x_1 )  ( 1 - x_2 )$.
This results in an inflow at bottom of the domain with the fracture as the preferential flow path. 
\Cref{fig:tang_sym} illustrates the fracture geometry and the resulting solution from the DG scheme~\eqref{eq:dgbulk} for~$d_0 = 10^{-1}$.
\begin{figure}[tbh]
\centering
\begin{subfigure}{0.495\textwidth}
\centering
\includegraphics[height=0.75\textwidth]{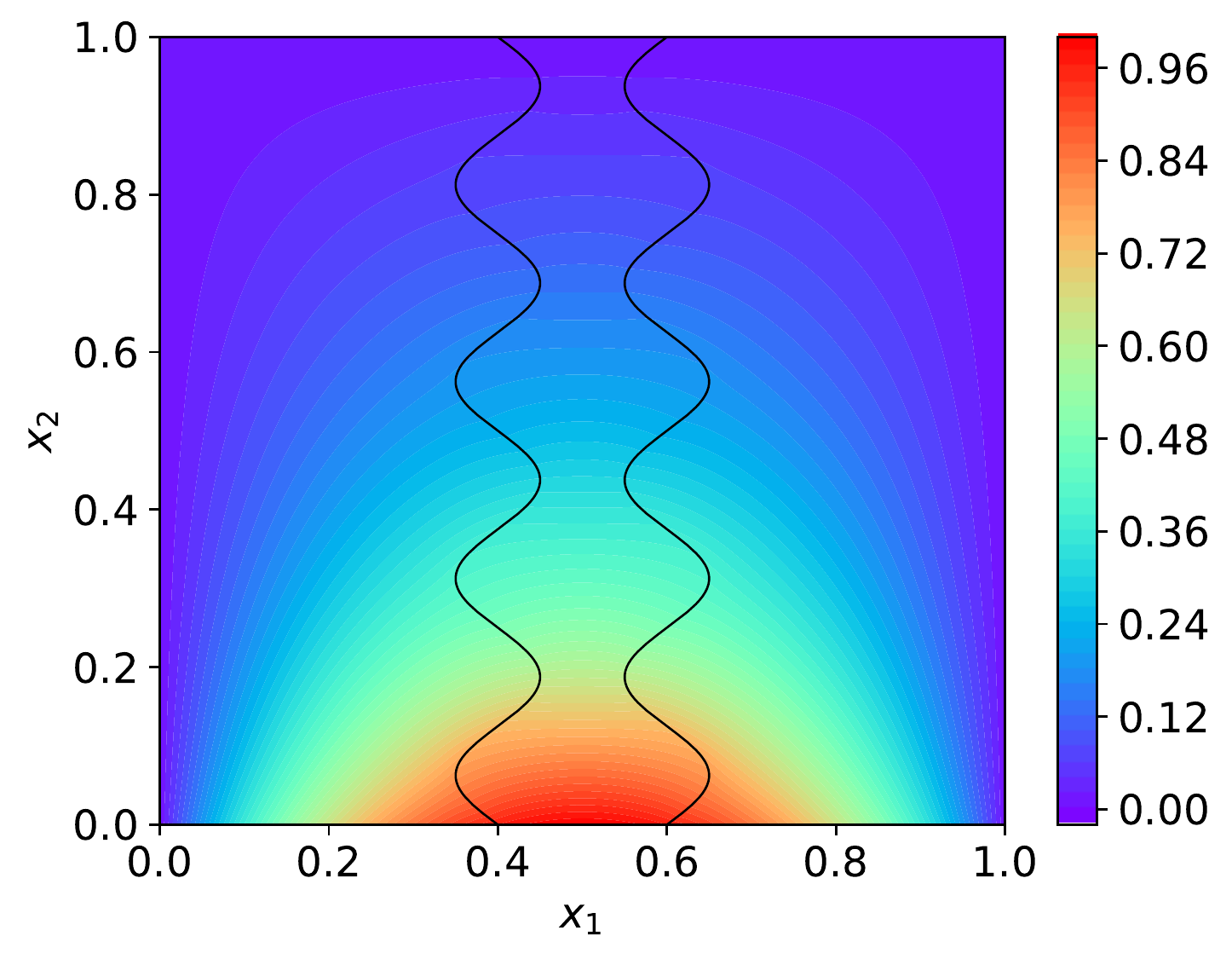}
\end{subfigure}
\hfill
\begin{subfigure}{0.495\textwidth}
\centering
\includegraphics[height=0.75\textwidth]{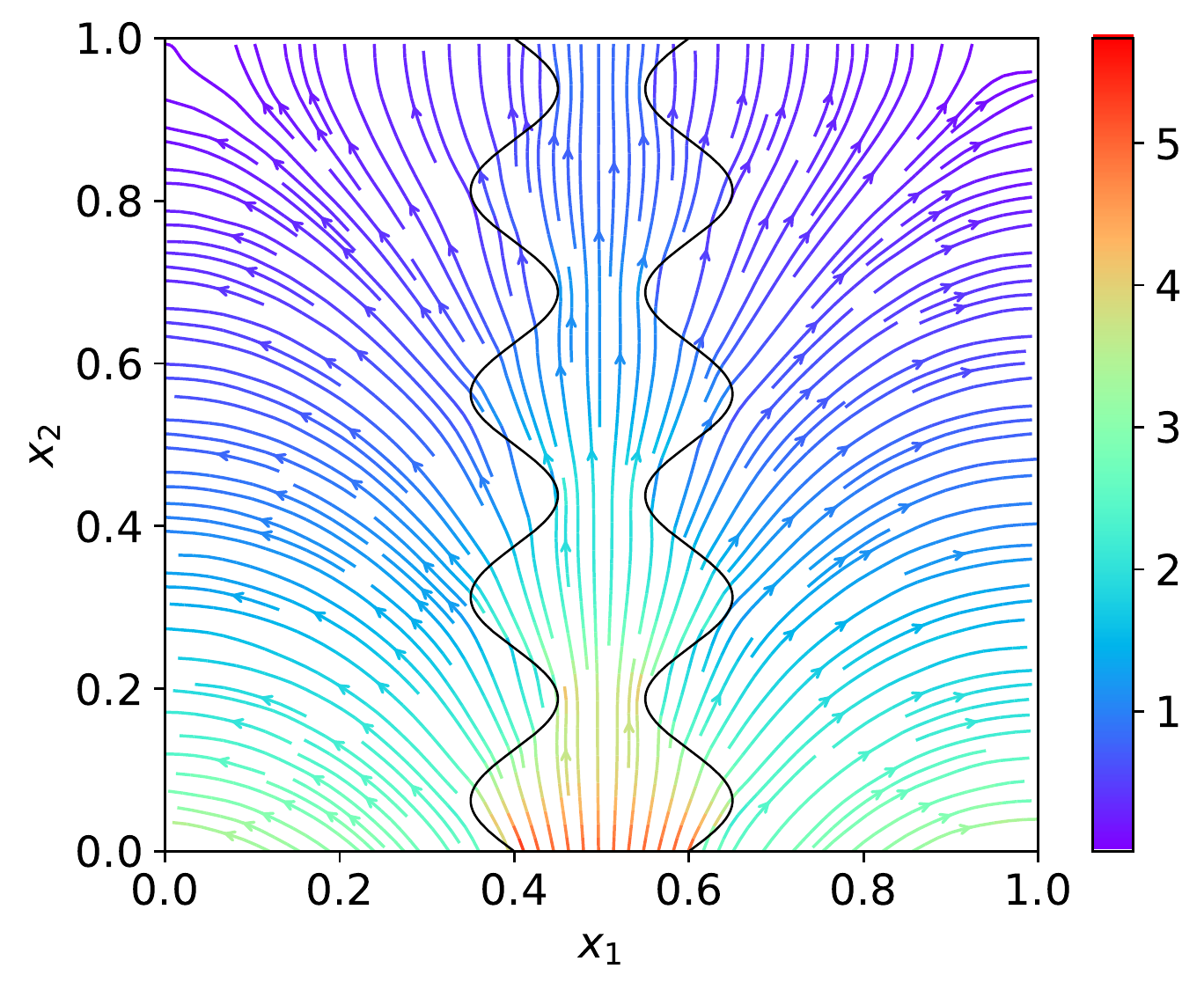}
\end{subfigure}
\caption{Full-dimensional numerical reference solution (\Cref{sec:tang_sym}) for the pressure~$p$ (left) and the velocity~$-\matr{K}\nabla p$ (right) for the case of~$d_0 = 10^{-1}$.}
\label{fig:tang_sym}
\end{figure}
\begin{figure}[tbh]
\centering
\includegraphics[width=0.8\linewidth]{media/legend.pdf}
\begin{subfigure}{.49\textwidth}
  \centering
  \includegraphics[width=\linewidth]{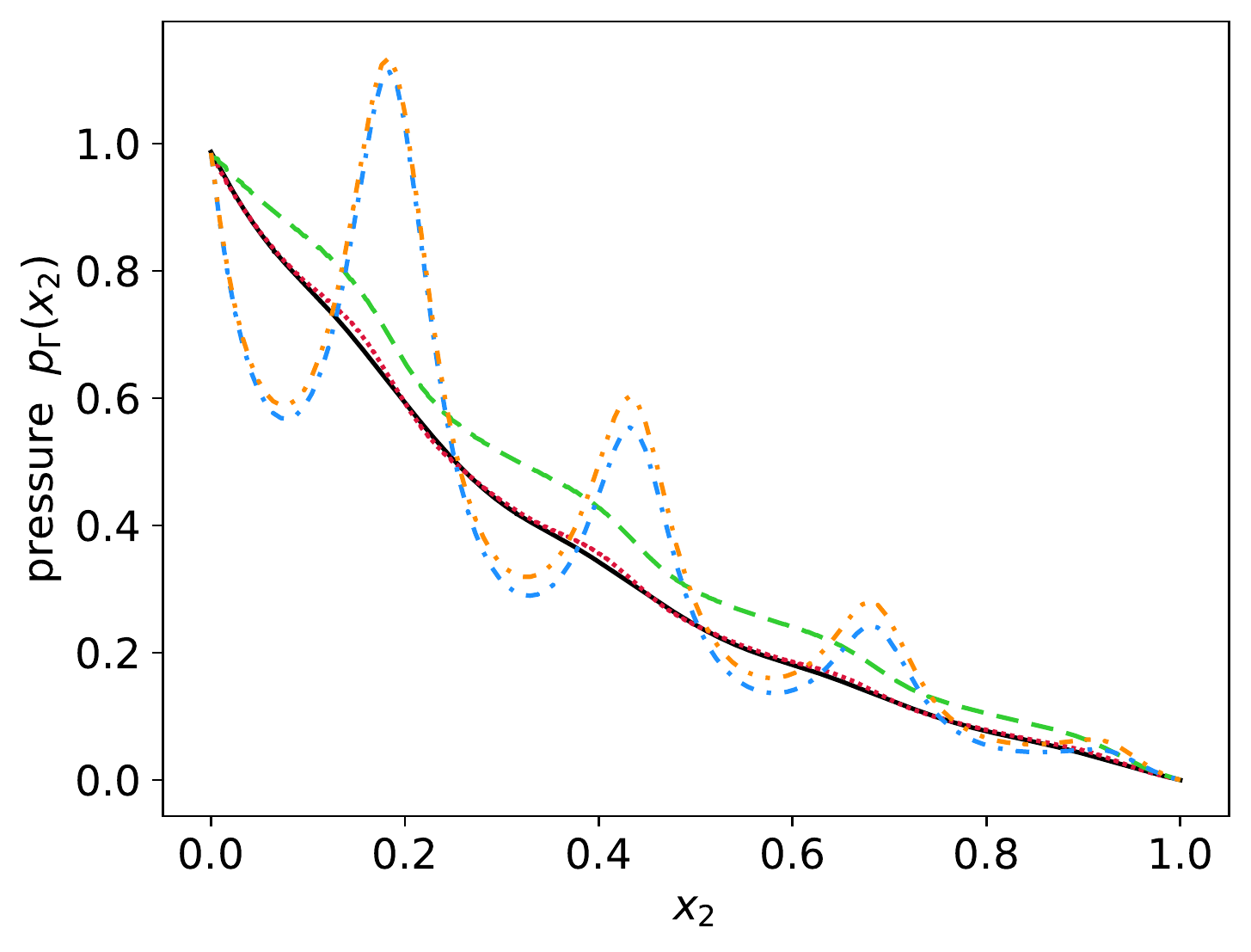}
\end{subfigure}\hfill%
\begin{subfigure}{.49\textwidth}
  \centering
  \includegraphics[width=\linewidth]{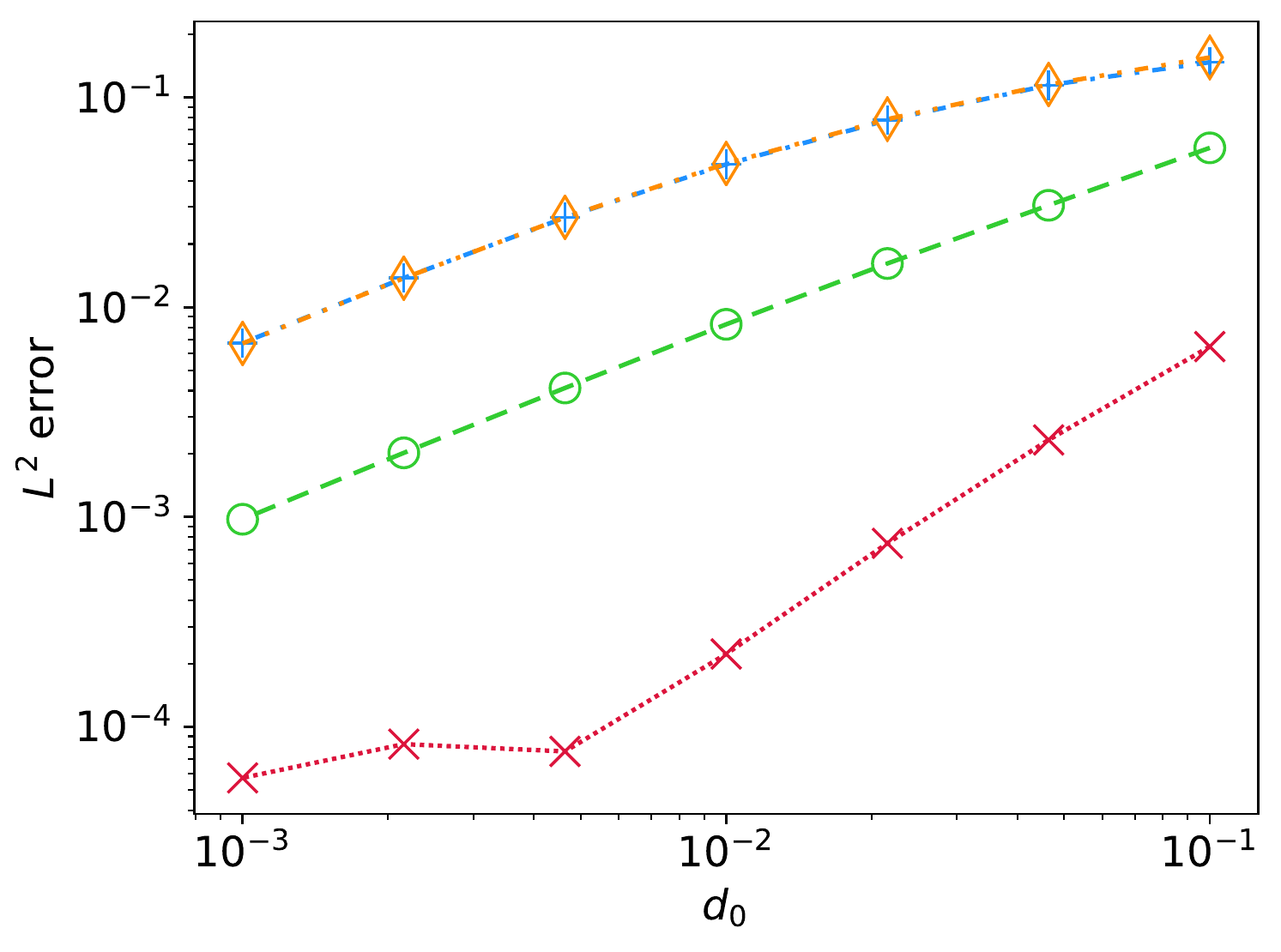}
\end{subfigure}
  \caption{Numerical solutions (\Cref{sec:tang_sym}) for the effective pressure~$p_\Gamma$ for the reduced model~\eqref{eq:strong} and its variants in comparison to the numerical reference solution~$\smash{p_\Gamma^\mathrm{ref}}$ for $d_0 = 10^{-1}$~(left) and $L^2$-error~$\Vert p_\Gamma - p_\Gamma^\mathrm{ref} \Vert_{L^2(\Gamma )}$ as function of~$d_0$~(right).}
  \label{fig:tang_sym_error}
\end{figure}

\Cref{fig:tang_sym_error} shows the DG solutions~$p_\Gamma$ of the different reduced models in comparison with the numerical reference solution~$p_\Gamma^\mathrm{ref}$. 
In particular, it can be seen that the models~II and~II-R display a similar behavior and are the least accurate, while model~I shows the best match with the reference solution. 
Further, in \Cref{fig:tang_sym_error}, the $L^2$-error~$\Vert p_\Gamma - p_\Gamma^\mathrm{ref} \Vert_{L^2(\Gamma )}$ displays a convergence with the decline of the aperture parameter~$d_0$ for all variants of the reduced model~\eqref{eq:strong}, where the solution of model~I converges faster than the solutions of the other models.
However, for model~I, the convergence stagnates at small apertures, which is associated with numerical errors in the computation of the reference solution~$p_\Gamma^\mathrm{ref}$ as discussed in \Cref{sec:perp_sym}.
Notably, for the test problem in this section, one finds by comparing the solution of model~I with the solutions of model~I-R and model~II in \Cref{fig:tang_sym_error} that both the artificial rectification of the bulk domains~$\Omega_1$ and~$\Omega_2$ and the negligence of aperture gradients in \cref{eq:strongB} significantly impair the accuracy of the solution.

\section{Conclusion}
In this work, we have derived a new model for single-phase flow in fractured porous media, where fractures are represented as lower-dimensional interfaces.
The model accounts for asymmetric fractures with spatially varying aperture and can be viewed as a generalization of a previous model by Martin~et~al.~\cite{martin05}.
The new model allows to study rough-surfaced, possibly curvilinear and winding real-world fracture geometries, while avoiding thin equi-dimensional fracture domains that require highly resolved grids in numerical methods.
In various numerical experiments, we have found a remarkable agreement between the solution of the new interface model and the reference profile.
Moreover, it has been observed that neglecting any of the terms in the model associated with a varying fracture aperture can substantially impair the accuracy of the solution.

As a future perspective, it is planned to apply the new model to real-world fracture geometries. 
Future extensions to related interface systems include two-phase flow systems~(cf.~\cite{burbulla22b}) and heterogeneous flow systems with different flow models for bulk and fracture domains, e.g., a free-flow regime inside the fracture.


\bibliographystyle{siamplain}
\bibliography{references}
\end{document}